\newtheorem{theorem}{Theorem}[section]
\newtheorem{lemma}[theorem]{Lemma}
\newtheorem{prop}[theorem]{Proposition}
\newtheorem{cor}[theorem]{Corollary}
\theoremstyle{definition}
\newtheorem{definition}[theorem]{Definition}
\newtheorem{example}[theorem]{Example}
\theoremstyle{remark}
\newtheorem{remark}[theorem]{Remark}
\numberwithin{equation}{section}
\newcommand{\NN}{{\mathbb N}}
\newcommand{\RR}{{\mathbb R}}
\newcommand{\eps}{\varepsilon}
\newcommand{\out}[1]{\ }
\DeclareMathOperator{\Ext}{Ext} 
\DeclareMathOperator{\fine}{fine}
\let\cal=\mathcal
\renewcommand{\phi}{\varphi}
\begin{document}
\title[Martin boundary of a fine domain]{Martin boundary of a
fine domain and a Fatou-Na{\"\i}m-Doob theorem for finely superharmonic
functions}

\author{Mohamed El Kadiri}
\address{Universit\'e Mohammed V
\\D\'epartement de Math\'ematiques
\\Facult\'e des Sciences
\\B.P. 1014, Rabat
\\Morocco}
\email{elkadiri@fsr.ac.ma}

\author{Bent Fuglede}
\address{Department of Mathematical Sciences
\\Universitetsparken 5
\\2100 Copenhagen
\\Denmark}
\email{fuglede@math.ku.dk}

\begin{abstract}
We construct the Martin compactification $\overline U$ of a fine
domain $U$ in $\RR^n$ ($n\ge 2$) and the Riesz-Martin kernel $K$ on
$U\times\overline U$. We obtain the integral representation of finely
superharmonic fonctions $\ge 0$ on $U$ in terms of $K$ and
establish the Fatou-Naim-Doob theorem in this setting.

\end{abstract}
\maketitle

\section{Introduction}\label{sec1}

The fine topology on an open set $\Omega\subset\RR^n$ was introduced
by H.\ Cartan in classical potential theory. It is defined as the
smallest topology on $\Omega$ making every superharmonic function on
$\Omega$ continuous. This topology is neither locally compact nor metrizable.
The fine topology has, however, other good
properties which allowed the development in the 1970's of a `fine'
potential theory on a finely open set $U\subset\Omega$, starting with the book
\cite{F1} of the second named author. The harmonic and superharmonic functions
and the potentials in this
theory are termed finely [super]harmonic functions and fine
potentials. Generally one distinguishes by the prefix `fine(ly)'
notions in fine potential theory from those in classical potential
theory on a usual (Euclidean) open set. Very many results from classical
potential theory have been extended to fine potential theory. In this
article we study the invariant functions, generalizing the
non-negative harmonic functions in the classical Riesz decomposition
theorem; and the integral representation of finely superharmonic
functions in terms of the `fine' Riesz-Martin kernel. The Choquet
representation theorem plays a key role. We close by
establishing the Fatou-Na{\"\i}m-Doob theorem on the fine
limit of finely superharmonic functions at the fine Martin boundary,
inspired in particular by the axiomatic approach of Taylor \cite{T}.

In forthcoming continuations \cite{EF2} and \cite{EF3} of the present paper
we study sweeping on a subset of the Riesz-Martin space, and the Dirichlet
problem at the Martin boundary of $U$.

Speaking in slightly more detail we consider the standard $H$-cone
$\cal S(U)$ of all finely superharmonic functions $\ge0$ on a given fine
domain $U$ (that is, a finely connected finely open subset of
a Greenian domain $\Omega\subset\RR^n$).  Generalizing
the classical Riesz representation theorem it was shown in
\cite{F4}, \cite{F5} that every function $u\in\cal S(U)$ has a unique
integral representation of the form
$$u(x)=\int G_U(x,y)d\mu(y)+h(x), \quad x\in \Omega,$$
where $\mu$ is a (positive) Borel measure on $U$,
$G_U$ is the (fine) Green kernel for $U$, and
$h$ is an invariant function on $U$. The term `invariant' reflects a property
established in \cite[Theorem 4.4]{F4} and generalizing fine harmonicity.

An interesting problem is whether every minimal invariant function is
finite valued and hence finely harmonic on $U$,
or equivalently whether every invariant function is the sum of a
sequence of finely harmonic functions. A negative answer to this question was
recently obtained (in dimension $n>2$) by Gardiner and Hansen \cite{GH}.

\centerline{*****}

Recall from \cite[Theorem 8.1]{F1} that a function
$u:U\longmapsto\,]-\infty,+\infty]$ is said to be
\textit{finely hyperharmonic} if (i) $u$ is finely l.s.c.\ and (ii) the
induced fine topology on $U$ has a base consisting of finely open sets $V$
of fine closure $\widetilde V\subset U$ such that
\begin{eqnarray}
u(x)\ge\int^*u\,d\eps_x^{\complement V}
\end{eqnarray}
for every $x\in V$ (complements being taken relative to $\Omega$).
Recall that the swept measure $\eps_x^{\complement V}$ (serving as a fine harmonic
measure) is carried by the fine boundary $\partial_fV\subset U$ and does not
charge the polar sets. As shown by Lyons \cite{L}, condition (ii) can be
replaced equivalently by the requirement that every point $x\in U$ has a fine
neighborhood base consisting of finely open sets $V$ with
$\widetilde V\subset U$ such that (1.1) holds. Every finely hyperharmonic
function is \textit{finely continuous} (\cite[Theorem 9.10]{F1}).

A finely hyperharmonic function $u$ on $U$ is said to be \textit{finely
superharmonic} if $u$ is not identically $+\infty$ on any fine component of
$U$, or equivalently if $u$ is finite on a finely dense subset of $U$, or
still equivalently if $u<+\infty$ quasi-everywhere (q.e.) on $U$.
We denote by $\cal S(U)$ the convex cone of all \textit{positive} finely
superharmonic functions on $U$, which is henceforth supposed to be finely
connected (a fine domain).

In \cite{El1} the first named author has defined a topology on the cone
${\cal S}(U)$, generalizing the topology of R.-M.\ Herv\'e in classical
potential theory. By identifying this
topology with the natural topology, now on the standard
$H$-cone ${\cal S}(U)$, it was shown in \cite{El1} that
${S}(U)$ has a compact base $B$, and
by Choquet's theorem that every function $u\in\cal S(U)$ admits a unique
integral representation of the form
$$u(x)=\int_Bs(x)d\mu(s), \quad x\in U,$$
where $\mu$ is a finite mesure on $B$ carried by the
(Borel) set of all extreme finely superharmonic functions belonging to $B$.

In the present article we define the Martin compactification $\overline U$ and
the Martin boundary $\Delta(U)$ of $U$.
While the Martin boundary of a usual open set is closed and hence compact, all
we can say in the present setup is that $\Delta(U)$ is a
$G_\delta$ subset of the compact Riesz-Martin space
$\overline U=U\cup\Delta U$ endowed with the natural topology. Nevertheless
we can define a Riesz-Martin kernel
$K:U\times\overline U\longrightarrow\,]0,+\infty]$ with good properties of
lower semicontinuity and measurability.
Every function $u\in\cal S(U)$ has an integral representation
$u(x)=\int_{\overline U}K(x,Y)d\mu(Y)$ in terms of a Radon measure $\mu$ on
$\overline U$. This representation is unique if it is required that $\mu$ be
carried by $U\cup\Delta_1(U)$ where $\Delta_1(U)$ denotes the minimal Martin
boundary of $U$, which is likewise a $G_\delta$ in $\overline U$. In that case
we write $\mu=\mu_u$. It is shown that, for any Radon measure $\mu$ on
$\overline U$, the associated function $u=\int K(.,Y)d\mu(Y)\in\cal S(U)$ is a
fine potential, resp.\ an invariant function, if and only if $\mu$ is carried
by $U$, resp.\ $\Delta(U)$.

There is a notion of minimal thinness of a set $E\subset U$ at a point
$Y\in\Delta_1(U)$, and an associated minimal-fine filter $\cal F(Y)$.
As a generalization of the classical Fatou-Na{\"\i}m-Doob theorem we show that
for any finely superharmonic function $u\ge0$ on $U$
and for $\mu_1$-almost every point $Y\in\Delta_1(U)$,
$u(x)$ has the limit $(d\mu_u/d\mu_1)(Y)$ as $x\to Y$ along
the minimal-fine filter $\cal F(Y)$. Here $d\mu_u/d\mu_1$ denotes the
Radon-Nikod{\'y}m derivative of the absolutely continuous component of
$\mu_u$ with respect to the absolutely continuous component of the measure
$\mu_1$ representing the constant function 1, which is finely harmonic and
hence invariant. Actually, we establish for any given invariant function $h>0$
the more general $h$-relative version of this result.\\

{\bf Notations}: For a Greenian domain $\Omega\subset\RR^n$ we denote by
$G_\Omega$ the Green kernel for
$\Omega$.  If $U$ is a fine domain in $\Omega$ we denote by
${\cal S}(U)$ the convex cone of finely superharmonic functions $\ge 0$
on $U$ in the sense of \cite{F1}. The convex cone of fine potentials
on $U$ (that is, the functions in ${\cal S}(U)$ for which every finely
subharmonic minorant is $\le 0$) is denoted by ${\cal P}(U)$. The cone of
invariant functions on $U$ is denoted by ${\cal H_i}(U)$; it is the
orthogonal band to ${\cal P}(U)$ relative to ${\cal S}(U)$. By $G_U$
we denote the (fine) Green kernel for $U$, cf.\ \cite{F2}. If $A\subset U$
and $f:A\longrightarrow[0,+\infty]$ one denotes by $R{}_f^A$, resp.\
${\widehat R}{}_f^A$, the reduced function, resp.\ the swept function, of $f$
on $A$ relative to $U$, cf.\ \cite[Section 11]{F1}.
For any set $A\subset\Omega$ we denote by $\widetilde A$ the
fine closure of $A$ in $\Omega$, and by $b(A)$ the base of $A$ in $\Omega$,
that is, the $G_\delta$ set of points of $\Omega$ at which $A$ is not thin,
in other words the set of all fine limit points of $A$ in $\Omega$.
We define $r(A)=\complement b(\complement A)$ (complements and bases relative
to $\Omega$). Thus $r(A)$ is a $K_\sigma$ set,
the least regular finely open subset of $\Omega$ containing the fine interior
$A'$ of $A$, and $r(A)\setminus A$ is polar.

\medskip\noindent
\textbf{Acknowledgment.} The Authors thank a referee for valuable references to
work related to Theorem 2.8 and Corollary 2.9, as described in Remark 2.10.

\section{The natural topology on the cone ${\cal S}(U)$}
\label{sec2}

We begin by establishing some basic properties of invariant functions
on an arbitrary fine domain $U\subset\Omega$ ($\Omega$ a Greenian domain in
$\RR^n$, $n\ge2$). First an auxiliary lemma of a general nature:

\begin{lemma}\label{lemma2.0} Every finely continuous function
$f:U\to\overline\RR$ is Borel measurable in the relative Euclidean topology
on $U\subset\Omega$.
\end{lemma}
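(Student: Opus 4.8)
The plan is to exploit the fact that the fine topology is generated by superharmonic functions, each of which is Borel (indeed Baire class at most one) in the Euclidean topology, and that the fine topology is the coarsest such topology. The statement then follows if we can show that every finely open subset of $U$ is Euclidean-Borel, for then the preimage under $f$ of every open (equivalently, every interval) subset of $\overline\RR$ is finely open, hence Euclidean-Borel, and $f$ is Euclidean-Borel measurable. So the crux is: \emph{every finely open set $V\subset\Omega$ is a Borel subset of $\Omega$ in the Euclidean topology.}

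First I would recall that the fine topology on $\Omega$ has a base consisting of sets of the form $\{x\in W : v(x) < w(x)\}$, where $W$ is Euclidean-open and $v,w$ are superharmonic on $W$ (one may even take $w$ to be, say, harmonic or a constant, but this is not needed). Since superharmonic functions are lower semicontinuous, hence Borel, each such basic finely open set is a Borel subset of $\Omega$. Next I would invoke the fact — due to Doob, and recorded in Brelot's or Helms's potential theory texts, and certainly available in the framework of \cite{F1} — that $\Omega$ with the fine topology is \emph{hereditarily Lindel\"of} (equivalently, that every family of finely open sets has a countable subfamily with the same union); this is a consequence of the quasi-Lindel\"of property of the fine topology together with the fact that polar sets are Euclidean-Borel (indeed $G_\delta$-type, being countable intersections of sets where a fixed potential is large). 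Concretely, given an arbitrary finely open $V$, write $V$ as a union of basic finely open sets; by the quasi-Lindel\"of property there is a countable subfamily whose union $V_0$ differs from $V$ by a polar set $V\setminus V_0$. Then $V = V_0 \cup (V\setminus V_0)$ is a countable union of Borel sets together with a polar, hence Borel, set, so $V$ is Euclidean-Borel.

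With that established, the conclusion is routine: for $f$ finely continuous and any $a\in\RR$, the set $\{x\in U : f(x) > a\}$ is finely open in $U$, hence of the form $V\cap U$ with $V$ finely open in $\Omega$; by the previous paragraph $V$ is Euclidean-Borel, and $U$ itself is Euclidean-Borel (being finely open in $\Omega$, by the same argument), so $\{f>a\}$ is a Euclidean-Borel subset of $U$. Since such sets generate the Borel $\sigma$-algebra of $\overline\RR$, $f$ is Borel measurable for the relative Euclidean topology on $U$.

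The main obstacle, and the only nontrivial input, is the passage from finely open to Euclidean-Borel, i.e. the use of the quasi-Lindel\"of property of the fine topology (the fact that an arbitrary union of finely open sets equals a countable subunion up to a polar set). Everything else is immediate from lower semicontinuity of superharmonic functions and from the fact that polar sets are Euclidean-Borel. I would cite the quasi-Lindel\"of property from \cite{F1} (or from Brelot's lectures) rather than reprove it. One should also note for safety that it suffices to check measurability of $\{f>a\}$ and $\{f<a\}$ for $a$ rational, which keeps the bookkeeping countable and makes the argument clean even at the values $\pm\infty$.
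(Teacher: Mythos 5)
Your reduction of the lemma to the claim that \emph{every finely open subset of $\Omega$ is Euclidean--Borel} is where the argument breaks down: that claim is false, and the precise false step is ``$V\setminus V_0$ is polar, hence Borel''. A polar set is contained in a Borel (even $G_\delta$) polar set, but need not itself be Borel, since every subset of a polar set is again polar and there exist uncountable compact polar sets (for instance a line segment $K$ in $\RR^3$, as in Example \ref{example2.4}; suitable Cantor sets of zero capacity work in $\RR^2$). Such a $K$ has cardinality of the continuum and therefore admits non-Borel subsets $P$, and this really does produce non-Borel finely open sets: $V:=(\Omega\setminus K)\cup P$ is finely open, because $\Omega\setminus K$ is Euclidean open and the complement $\complement V=K\setminus P$ is polar, hence thin at every point of $P$; yet $V\cap K=P$ is not Borel, so $V$ is not Borel. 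Consequently the quasi-Lindel\"of decomposition $V=V_0\cup(V\setminus V_0)$ cannot deliver Borel measurability of arbitrary finely open sets, and the final step of your proof (and also the assertion that $U$ itself is Borel ``by the same argument'') collapses. A side remark: the fine topology is not hereditarily Lindel\"of; it has only the quasi-Lindel\"of property, which is in fact all you use, but as shown it is not enough here.

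What makes the lemma true is that the sets $\{f>a\}$ arising from a finely \emph{continuous} $f$ are far from arbitrary finely open sets, and one must exploit the fine continuity itself rather than a (false) general statement about finely open sets. The paper does this via the base operation of \cite{F1a}: after reducing to $f(U)\subset[0,1]$, one extends $f$ to $f_0,f_1:\Omega\to[0,1]$ by $0$, resp.\ $1$, off $U$; fine upper semicontinuity of $f_1$ and $1-f_0$ gives $f_0\le b(f_0)\le b(f_1)\le f_1$, hence $f=b(f_0)=b(f_1)$ on $U$, and by \cite{F1a} the function $b(f_1)$ is of class $\cal G_\delta(\Omega)$, i.e.\ a pointwise infimum of a sequence of Euclidean l.s.c.\ functions on $\Omega$, hence Borel; restricting to $U$ yields Borel measurability of $f$ in the relative Euclidean topology, without ever needing $U$ or $\{f>a\}$ to be Borel in $\Omega$. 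If you want to salvage your outline, you would have to prove directly that finely cozero-type sets of a finely continuous function are relatively Borel, which is essentially the lemma itself; the passage through general finely open sets cannot be repaired.
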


\begin{proof} We shall reduce this to the known particular case where
$U=\Omega$, see \cite{F1a}. Since $\overline\RR$ is homeomorphic with $[0,1]$
we may assume that $f(U)\subset[0,1]$. Recall that the base $b(\phi)$ of
a function $\phi:\Omega\to[0,1]$ is defined as the finely derived function
$b(\phi):\Omega\to[0,1]$:
$$
b(\phi)(x)=\underset{y\to x,\,y\in\Omega\setminus{x}}{\fine\lim\sup}\,\phi(y)
$$
for $x\in\Omega$, see \cite[p.\ 590]{Do2}, \cite{F1a}.
Extend the given finely continuous function $f:U\to[0,1]$ to functions
$f_0,f_1:\Omega\to[0,1]$ by defining $f_0=0$, $f_1=1$ on $\Omega\setminus U$.
Then $f_1$ and $1-f_0$ are finely u.s.c.\ on $\Omega$, and hence
$b(f_1)\le f_1$ and $b(1-f_0)\le1-f_0$. Since $f_0\le f_1$ it follows that
$f_0\le b(f_0)\le b(f_1)\le f_1$. But $f_0=f_1=f$ on $U$, and so
$f=b(f_0)=b(f_1)$ on $U$. According to \cite{F1a}, $b(f_1)$ (and
$b(1-f_0)$) is of class $\cal G_\delta(\Omega)$, that is,
representable as the pointwise infimum of a (decreasing) sequence of
Euclidean l.s.c.\ functions $\Omega\to[0,+\infty]$. Consequently, $f$
(and $1-f$) is of class $\cal G(U)$, that is representable as the pointwise
infimum of a sequence of l.s.c.\ functions $U\to[0,+\infty]$, $U$ being given
the relative Euclidean topology. In particular, $f$ is Borel measurable in
that topology on $U$.
\end{proof}

\begin{lemma}\label{lemma2.1} If \,$h$ is invariant on $U$,
if \,$u\in{\cal S}(U)$, and if \,$h\le u$, then $h\preccurlyeq u$.
\end{lemma}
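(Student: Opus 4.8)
The plan is to show that the (pointwise) difference $u-h$, suitably interpreted on the polar set where both functions equal $+\infty$, is itself finely superharmonic; then $u=h+(u-h)$ exhibits $h$ as a specific minorant of $u$, i.e.\ $h\preccurlyeq u$. First I would dispose of the polar set $P:=\{h=+\infty\}$: since $h$ is invariant it belongs to $\mathcal S(U)$, hence is finely continuous and finite q.e., so $P$ is polar and $U':=U\setminus P$ is a finely open, finely dense subset of $U$. On $U'$ set $v:=u-h$. Then $v\ge0$, and $v$ is finely l.s.c.\ on $U'$ because in a fine neighbourhood of any point of $U'$ the function $-h$ is real‑valued and finely continuous while $u$ is finely l.s.c., so their sum is finely l.s.c.

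The core step is to check that $v$ satisfies the fine supermean inequality on a fine‑neighbourhood base at every point $x_0\in U'$. Using the base property (finely open $V$ with $\widetilde V$ compact and contained in a prescribed fine neighbourhood form a base, applied with $U'$ in the role of the neighbourhood) together with $u\in\mathcal S(U)$, I choose a fine neighbourhood $V$ of $x_0$ with $\widetilde V$ compact, $\widetilde V\subset U'$, and $u(x)\ge\int^*u\,d\eps_x^{\complement V}$ for all $x\in V$. The point where the hypothesis that $h$ is \emph{invariant} (rather than merely finely superharmonic) is used is the mean‑value \emph{equality} $\widehat R{}_h^{\complement V}=h$, that is, $h(x)=\int h\,d\eps_x^{\complement V}$ for all $x\in V$; this is exactly the property of invariant functions generalizing fine harmonicity established in \cite{F4}. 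Since $\partial_fV\subset\widetilde V\subset U'$, the function $h$ is finite $\eps_x^{\complement V}$‑a.e.\ and $\eps_x^{\complement V}$‑integrable (its integral being $h(x)<+\infty$), so subtracting the equality from the inequality and using additivity of the upper integral with an integrable function gives $v(x)=u(x)-h(x)\ge\int^*u\,d\eps_x^{\complement V}-\int h\,d\eps_x^{\complement V}=\int^*v\,d\eps_x^{\complement V}$ for all $x\in V$ (the case $\int^*u\,d\eps_x^{\complement V}=+\infty$ forces $u(x)=+\infty$ and is trivial). Hence $v$ is finely hyperharmonic on $U'$.

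Finally, since $v\ge0$ on $U'=U\setminus P$ with $P$ polar, $v$ extends uniquely to a non‑negative finely hyperharmonic function on $U$ by the extension theorem for finely hyperharmonic functions across polar sets; being dominated by $u$ it is finite on a finely dense set, hence lies in $\mathcal S(U)$. As $h+v$ and $u$ are both in $\mathcal S(U)$ and agree on the finely dense set $U'$, they coincide on $U$, so $u=h+v$ with $v\in\mathcal S(U)$, i.e.\ $h\preccurlyeq u$. The main obstacle is the core step, and specifically invoking the mean‑value equality $\widehat R{}_h^{\complement V}=h$ for invariant $h$ on a suitable base of sets $V$: this is precisely what makes the naive ``difference of superharmonic functions is superharmonic'' argument valid here, whereas it would fail for an arbitrary minorant of $u$.
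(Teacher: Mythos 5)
Your proof is correct and follows essentially the same route as the paper's: in both cases one removes a polar exceptional set (the paper takes the set off which the invariant function $h$ is finely harmonic, you take $\{h=+\infty\}$), shows that $u-h$ is finely superharmonic there, and extends it across the polar set by fine continuity (\cite[Theorem 9.14]{F1}) to obtain $u=h+s$ with $s\in\mathcal S(U)$. The only difference is that the paper cites in one line that an invariant function is finely harmonic off a polar set, whereas you re-derive the needed fine hyperharmonicity of $u-h$ by hand from the same underlying property of invariant functions, namely the sweeping/mean-value identity $\widehat R{}_h^{U\setminus V}=h$ of \cite[Theorem 4.4]{F4}, so the two arguments rest on the same ingredients.
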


\begin{proof} There is a polar set $E\subset U$ such
that $h$ is finely harmonic on $U\setminus E$, and hence
$u-h$ is finely superharmonic on $U\setminus E$.  According to
\cite[Theorem 9.14]{F1},
$u-h$ extends by fine continuity to a function $s\in{\cal
S}(U)$ such that $h+s=u$ on $U\setminus E$ and hence on all of $U$,
whence $h\preccurlyeq u$.
\end{proof}

\begin{lemma}\label{lemma2.2} If \,$u\in{\cal S}(U)$ and $A\subset U$ then
$R_u^A(x)=\int_U u\,d\eps_x^{A\cup(\Omega\setminus U)}$ for $x\in U$.
\end{lemma}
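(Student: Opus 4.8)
The plan is to reduce the identity $R_u^A(x)=\int_U u\,d\eps_x^{A\cup(\Omega\setminus U)}$ to the definition of the reduced function together with the known behaviour of swept-out measures in fine potential theory. First I would recall that, by definition, $R_u^A=\inf\{v\in{\cal S}(U):v\ge u\text{ on }A\}$, and that its fine lower semicontinuous regularization $\widehat R_u^A$ is again in ${\cal S}(U)$, with $R_u^A=\widehat R_u^A$ quasi-everywhere, the exceptional set being polar. Since neither side of the asserted equality is changed on a polar set (the measure $\eps_x^{A\cup(\Omega\setminus U)}$ does not charge polar sets, by the properties of swept measures recalled in the introduction), it suffices to prove the formula with $R_u^A$ replaced by $\widehat R_u^A$.

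Next I would invoke the integral representation of $\widehat R_u^A$ by a swept measure. Writing $B=A\cup(\Omega\setminus U)$, viewed as a subset of $\Omega$, the measure $\eps_x^{B}$ is the classical balayage of $\eps_x$ onto $B$ relative to $\Omega$; it is carried by $\widetilde B$ and charges no polar set. The key step is to identify $\int_U u\,d\eps_x^{B}$ with $\widehat R_u^A(x)$. For this I would first treat the case where $u=G_U(\cdot,y)$ is a fine Green kernel, or more generally a fine potential, using the known compatibility between sweeping relative to $U$ and sweeping relative to $\Omega$ (cf.\ \cite{F2}): for a fine potential $p=\int G_U(\cdot,y)\,d\nu(y)$ one has $\widehat R_p^A(x)=\int_U p\,d\eps_x^{B}$, because $\eps_x^{B}$ restricted to $U$ performs exactly the sweeping of $\eps_x$ (as a measure in the fine-potential-theoretic sense on $U$) onto $A$. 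Then I would pass to a general $u\in{\cal S}(U)$ by the Riesz-type decomposition $u=p+h$ with $p\in{\cal P}(U)$ and $h\in{\cal H}_i(U)$, noting that $h$ is a limit of an increasing sequence of fine potentials (or handling the invariant part directly via its own integral representation), and using monotone convergence on both sides together with the linearity and order-continuity of $v\mapsto\widehat R_v^A$ and of $v\mapsto\int_U v\,d\eps_x^{B}$.

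The main obstacle I anticipate is the careful bookkeeping of \emph{which} ambient space the sweeping is performed in: $R_u^A$ and $\widehat R_u^A$ are defined within fine potential theory on $U$, whereas $\eps_x^{A\cup(\Omega\setminus U)}$ is a balayage relative to the Greenian domain $\Omega$. Reconciling these requires the fact, established in \cite{F2} and \cite{F1}, that for a finely open $U\subset\Omega$ the fine boundary behaviour of functions in ${\cal S}(U)$ is governed by restricting the $\Omega$-balayage onto $\Omega\setminus U$ (which accounts for the "boundary" contribution) together with balayage onto $A$ inside $U$; the set $A\cup(\Omega\setminus U)$ packages both. Once this identification is in place, and once one checks that the swept measure indeed lives on $U$ up to a polar set so that the integral $\int_U$ captures everything, the equality follows. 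A secondary, more routine point is to confirm that the formula is correct pointwise (not merely q.e.) for $x\in U$, which follows since $\widehat R_u^A$ is finely continuous and the right-hand side, as a function of $x$, is finely superharmonic (being a specific reduced function), so the two finely continuous functions agreeing q.e.\ agree everywhere on the fine domain $U$.
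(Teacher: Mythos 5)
Your route differs from the paper's (which proves the identity directly, for arbitrary $u\in\cal S(U)$, by identifying $R_u^A$ on $V:=U\setminus A$ with the upper solution $\overline H{}_{u_0}^V$ of the generalized fine Dirichlet problem, the reverse inequality coming from a gluing argument via \cite[Lemma 10.1]{F1}), but as written it has genuine gaps. The central one is that your ``key step'' for fine potentials is asserted rather than proved: the claim that $\widehat R{}_p^A(x)=\int_U p\,d\eps_x^{A\cup(\Omega\setminus U)}$ because ``$\eps_x^{A\cup(\Omega\setminus U)}$ restricted to $U$ performs exactly the sweeping of $\eps_x$ onto $A$ relative to $U$'' is precisely the content of the lemma (for potentials), and \cite{F2} does not supply it in that form --- it constructs the fine Green kernel, not this measure-level compatibility. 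Second, your passage to general $u$ rests on the invariant part $h$ being an increasing limit of fine potentials. In the fine setting this is not elementary: in the present paper it is obtained only via Corollary \ref{cor3.11}/Theorem \ref{thm3.12} (one needs the closure of the exhausting sets in the \emph{natural} topology on $\overline U$ to lie in $U$; Euclidean compactness of $K\subset U$ does not suffice), and those results rest on Lemma \ref{lemma3.21a}, Theorem \ref{thm3.21} and Corollary \ref{cor3.22}, which themselves use Lemma \ref{lemma2.2}. So, without an independent argument, your reduction is circular within this development.

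There is also a problem at the level of the unregularized reduction. The identity is asserted pointwise for every $x\in U$, and $R_u^A$ differs from $\widehat R{}_u^A$ exactly on a polar subset of $A$ (the points of $A$ where $A$ is thin), so ``polar sets don't matter'' does not let you replace $R_u^A$ by $\widehat R{}_u^A$; indeed at such points the formula with $R_u^A$ does not follow from the one with $\widehat R{}_u^A$. The paper handles this by first replacing $A$ by its base $b(A)\cap U$, observing that \emph{both} $\widehat R{}_u^A$ and $\int_U u\,d\eps_x^{A\cup(\Omega\setminus U)}$ are unchanged, after which $R_u^A=\widehat R{}_u^A$ and the points of $A$ are trivial since there $\eps_x^{A\cup(\Omega\setminus U)}=\eps_x$. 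Your closing argument (``the two finely continuous functions agreeing q.e.\ agree everywhere'') is moreover circular: the fine continuity, indeed fine superharmonicity, of $x\mapsto\int_U u\,d\eps_x^{A\cup(\Omega\setminus U)}$ is not known a priori --- it is part of what the lemma establishes, since $u$ is only finely superharmonic on $U$ (this is also why the paper needs Lemma \ref{lemma2.0} even to make sense of the integral, a point you pass over). To salvage your approach you would need an honest proof of the potential case (e.g.\ via $G_U=G_\Omega-\widehat R{}^{\complement U}_{G_\Omega}$ and classical balayage identities in $\Omega$), an approximation of general $u\in\cal S(U)$ not relying on later results (for instance via \cite[Theorem 3]{F3a} as in the proof of Theorem \ref{thm2.6}), and the base-of-$A$ reduction just described.
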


\begin{proof} The integral exists because $u\ge0$ is Borel measurable in the
relative Euclidean topology on $U\subset\Omega$ by Lemma \ref{lemma2.0}.
Since $\widehat R{}_u^A$ and $\int_Uu\,d\eps_x^{A\cup(\Omega\setminus U)}$
remain unchanged if $A$ is replaced by its base $b(A)\cap U$ relative to $U$
and since the base operation is idempotent
we may assume that $A=b(A)\cap U$. Consequently, ${\widehat R}{}_u^A=R_u ^A$
and the set $V:=U\setminus A$ is
finely open (and regular). Let $u_0$ denote the
extension of $u$ to $\widetilde U=U\cup\partial_fU$ by the value $0$ on
$\partial_fU$.
Every function $s\in\cal S(U)$ with $s\ge u$ on $A$ is an upper function
(superfunction) for $u_0$ relative to $V$, see \cite[\S\S14.3--14.6]{F1}
concerning the (generalized) fine Dirichlet problem. It follows that
 $$
{\widehat R}{}_u^A(x)=R_u^A(x)\ge\overline H{}_{u_0}^V(x)
=\int^*_\Omega u_0\,d\eps_x^{\Omega\setminus V}
=\int^*_U u\,d\eps_x^{A\cup(\Omega\setminus U)}$$
for $x\in V$. For the opposite inequality, consider any upper function
$v$ for $u_0$ relative to $V$. In particular, $v\ge-p$ on $V$ for some finite
and hence semibounded potential $p$ on $\Omega$.  Define
$w=u\wedge v$ on $V$ and $w=u$ on $A=U\setminus V$. Then
 $$\underset{y\to x,\,y\in V}{\fine\lim\inf}\,w(y)
\ge u(x)\wedge\underset{y\to x,\,y\in V}{\fine\lim\inf}\,v(y)=u(x)$$
for every $x\in U\cap\partial_fV$. It therefore follows by
\cite[Lemma 10.1]{F1} that $w$ is finely hyperharmonic on $U$. Moreover,
$w$ is an upper function for $u$ relative to $V$ because
$w=u\wedge v\ge 0\wedge(-p)=-p$ on $V$. Since $w=u$ on $A$ we have
$w\ge {\widehat R}{}_u^A$ on $U$, and in particular
$v\ge w\ge {\widehat R}{}_u^A$ on $V$. By varying $v$ we obtain
$\overline H{}_{u_0}^V\ge{\widehat R}{}_u^A\ge R_u^A$ on $V$. Altogether we have
established the asserted equality for $x\in V$. It also holds for
$x\in U\setminus V=A=b(A)\cap U$ because
$\widehat R{}_u^A(x)=R_u^A(x)=u(x)$ and because
$\eps_x^{A\cup(\Omega\setminus U)}=\eps_x$, noting that
$x\in b(A)\subset b(A\cup(\Omega\setminus U))$.
\end{proof}

\begin{lemma}\label{lemma2.3} Let $u\in\cal S(U)$ and let $A$ be a subset of
$U$. The restriction of ${\widehat R}{}_u^A$ to any finely open subset $V$ of
\,$U\setminus A$ is invariant.
\end{lemma}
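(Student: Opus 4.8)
The plan is to show that the Riesz measure of $\widehat R_u^A$, restricted to $V$, is the zero measure, and then to conclude by the uniqueness of the fine Riesz decomposition $s=G_U\mu+h$ recalled in the introduction: a function in $\mathcal S(V)$ whose Riesz measure vanishes equals its own invariant part. Before that one records the routine point that $\widehat R_u^A\in\mathcal S(U)$: it is finely hyperharmonic and, being $\le u$, finite q.e.\ on $U$, hence not identically $+\infty$ on any fine component; consequently its restriction to the finely open set $V$ lies in $\mathcal S(V)$ (on each fine component of $V$), so the question of invariance is meaningful.

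The geometric heart of the matter is the observation that $V\cap b(A)=\emptyset$. Indeed, let $x\in V$. Since $V$ is finely open it is a fine neighbourhood of $x$, so $\Omega\setminus V$ is thin at $x$; as $A\subset U\setminus V\subset\Omega\setminus V$, the subset $A$ is a fortiori thin at $x$, i.e.\ $x\notin b(A)$. Together with the hypothesis $V\cap A=\emptyset$ this gives $V\cap\widetilde A=V\cap\bigl(A\cup b(A)\bigr)=\emptyset$, that is, $V\subset U\setminus\widetilde A$.

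Now I would invoke the standard fact of fine potential theory (cf.\ \cite[Section 11]{F1}) that sweeping $u$ onto $A$ leaves no Riesz mass outside $\widetilde A$: the Riesz measure of $\widehat R_u^A$ is carried by $b(A)\cap U$, equivalently $\widehat R_u^A$ is invariant on $U\setminus\widetilde A$. Since the Riesz measure is a local notion, the Riesz measure of the restriction of $\widehat R_u^A$ to $V$ is the restriction to $V$ of the Riesz measure of $\widehat R_u^A$, and this is $0$ because $V\cap b(A)=\emptyset$. Hence $\widehat R_u^A|_V$ is invariant.

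The only step that is not mere bookkeeping is this appeal to ``sweeping onto $A$ puts no Riesz mass outside $\widetilde A$'', the fine counterpart of the classical fact that $\widehat R_u^A$ is harmonic off $\overline A$. If one prefers to stay within the tools already set up, one can instead check invariance of $w:=\widehat R_u^A|_V$ directly from Lemma \ref{lemma2.2}: applied on $V$ it gives $\widehat R_w^{V\setminus W}(x)=\int_V w\,d\eps_x^{\Omega\setminus W}$ for every finely open $W$ with $\widetilde W\subset V$ and every $x\in V$, and substituting $w(y)=\int_U u\,d\eps_y^{A\cup(\Omega\setminus U)}$ (Lemma \ref{lemma2.2} on $U$) together with the transitivity of balayage $\int\eps_y^{C}\,d\eps_x^{D}(y)=\eps_x^{C}$ — valid since $C:=A\cup(\Omega\setminus U)\subset\Omega\setminus W=:D$, because $W\cap A\subset W\cap b(A)=\emptyset$ and $W\subset U$ — yields $\widehat R_w^{V\setminus W}=w$ on $V$; by the characterization of invariant functions through sweeping out the sets $W$ with $\widetilde W\subset V$ (\cite[Theorem 4.4]{F4}), this is precisely invariance.
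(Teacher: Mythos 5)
Your second, ``stay within the tools already set up'' argument is correct, but be aware that your first route is not really available: the assertion that the Riesz measure of $\widehat R{}_u^A$ charges nothing outside $\widetilde A$ (equivalently, that $\widehat R{}_u^A$ is invariant on $U\setminus\widetilde A$) is, after your correct observation that any finely open $V\subset U\setminus A$ satisfies $V\cap b(A)=\varnothing$, precisely the statement of the lemma; what \cite[Section 11]{F1} actually provides (Corollary 11.13) is fine \emph{harmonicity} of $\widehat R{}_v^A$ off $\widetilde A$ only for suitably bounded $v$, and the ``locality of the Riesz measure'' you invoke is itself proved in this paper (Proposition \ref{prop3.15}) \emph{using} Lemma \ref{lemma2.3}. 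So the first paragraph is circular as a proof, though you flag this honestly.

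Your fallback argument is a genuinely different and valid route from the paper's. The paper truncates: it writes $\widehat R{}_u^A=\sup_k\widehat R{}_{u\wedge k}^A$, applies \cite[Corollary 11.13]{F1} to get fine harmonicity of each $\widehat R{}_{u\wedge k}^A$ on $V$, and then passes to the increasing limit using Lemma \ref{lemma2.1} and the fact that the invariant functions on $V$ form a band. You instead verify the sweeping identity $\widehat R{}_w^{V\setminus W}=w$ for $w=\widehat R{}_u^A|_V$ and all finely open $W$ with $\widetilde W\subset V$, by combining Lemma \ref{lemma2.2} on $V$ and on $U$ with the transitivity $\bigl(\eps_x^{D}\bigr){}^{C}=\eps_x^{C}$ for $C=A\cup(\Omega\setminus U)\subset D=\Omega\setminus W$ (which follows from idempotence of sweeping and monotonicity, and is standard), and then conclude by the \cite[Theorem 4.4]{F4} characterization of invariance; this is essentially the same mechanism the paper itself uses later in Corollary \ref{cor3.18}. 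The small points to tidy are routine: Lemma \ref{lemma2.2} gives $R$ rather than $\widehat R$, but the two agree q.e.\ and both sides of your identity are finely continuous on $V$; the outer integral over $V$ equals the full integral because $\eps_x^{\Omega\setminus W}$ is carried by $\partial_fW\subset\widetilde W\subset V$ for $x\in W$ (and the case $x\in V\setminus W$ is trivial); and one needs a countable cover of $V$ by such $W$, which is exactly what \cite[Theorem 4.4]{F4} supplies, as in the paper's proof of Corollary \ref{cor3.18}. The trade-off: the paper's proof is shorter granted \cite[Corollary 11.13]{F1}, while yours avoids truncation and the band argument at the cost of invoking balayage transitivity and the countable-cover criterion for invariance.
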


\begin{proof} We have ${\widehat R}{}_u^A
=\sup_{k\in\NN}{\widehat R}{}_{u\wedge k}^A$. Each of the functions
${\widehat R}{}_{u\wedge k}^A$ is finely harmonic on $V$
by \cite[Corollary 11.13]{F1}, and hence ${\widehat R}{}_u^A$ is invariant on
$V$ in view of Lemma \ref{lemma2.1} because the invariant
functions on $V$ form a band (the orthogonal band to $\cal P(V)$).
\end{proof}

As shown by the following example, the invariant functions on finely open
subsets of $U$ do not form a sheaf.

\begin{example}\label{example2.4} Let $\mu$ be the one-dimensional Lebesgue
measure on a line segment $E$ in $\Omega:=U:=\RR^3$. Then $E$ is polar and
hence everywhere thin in $\RR^3$. Every point $x\in\RR^3$ therefore has a
fine neighborhood $V_x$ with $\mu(V_x)=0$. (For  $x\in E$ take
$V_x=\{x\}\cup\complement L$, where $L$ denotes the whole line extending $E$.)
Thus $\mu$ does not have a (minimal) fine
support (unlike measures which do not charge any polar set). The Green
potential $G_\Omega\mu$ is invariant on each $V_x$ (but of course not on
$\Omega$). For if $p$ denotes a non-zero fine potential on $V_x$, $x\in E$,
such that $p\preccurlyeq G_\Omega\mu$ on $V_x$ then $p$ is finely harmonic on
$V_x\setminus\{x\}=\complement L$ along with $G_\Omega\mu$. Hence $p$ behaves
on $V_x$ near $x$ like a constant times $G_{V_x}\eps_x$ and is therefore of the
order of magnitude $1/r$, where $r$ denotes the distance from $x$, cf.\
\cite[Th\'eor\`eme]{F2}. But on the plane through $x$ orthogonal to $L$,
$G_\Omega\mu$ behaves like a
constant times $\log(1/r)$, in contradiction with $p\le G_\Omega\mu$.
\end{example}

The invariant functions on finely open subsets of $U$ do, however, have
a kind of countable sheaf property, as shown by (a) and (b) in the following
theorem:

\begin{theorem}\label{thm2.5} {\rm{(a)}} Let $u\in\cal S(U)$ be invariant
and let $V$ be a finely open subset of \,$U$. Then $u_{|V}$ is
invariant.

{\rm{(b)}} Let $u\in\cal S(U)$, and let $(U_j)$ be a countable cover
of \,$U$ by regular finely open subsets of $U$. If each $u_{|U_j}$ is invariant
then $u$ is invariant.

{\rm{(c)}} Let $(u_\alpha)$ be a decreasing net of invariant functions in
$\cal S(U)$. Then $\widehat\inf_\alpha\,u_\alpha$  is invariant. Moreover, the set
$V=\{\inf_\alpha u_\alpha<+\infty\}$ is finely open, and
$\widehat\inf_\alpha\,u_\alpha=\inf_\alpha u_\alpha$ on $V$.
\end{theorem}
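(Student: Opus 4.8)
The plan is to treat the three parts in turn, leaning on Lemma \ref{lemma2.1}, Lemma \ref{lemma2.3}, and the band structure of the invariant functions inside $\cal S(V)$ for finely open $V\subset U$.

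For (a): since $u$ is invariant on $U$, there is a polar set $E\subset U$ outside of which $u$ is finely harmonic. Then $u_{|V}$ is finely harmonic on $V\setminus E$ and is the fine-continuous extension (Theorem 9.14 of \cite{F1}) of that finely harmonic function, so $u_{|V}\in\cal S(V)$. Now I must rule out the existence of a non-zero fine potential $p$ on $V$ with $p\preccurlyeq u_{|V}$. The natural route is to take $V$ first of the form $V=U\setminus A$ with $A=b(A)\cap U$ and apply Lemma \ref{lemma2.3} to $u$ itself: because $u$ is invariant on $U$, $\widehat R_u^A=u$ on all of $U$ (the largest finely subharmonic minorant of $u$ supported off $A$ would otherwise contradict invariance — one uses that $u-\widehat R_u^A$ is a fine potential dominated by $u$), hence $u_{|V}=(\widehat R_u^A)_{|V}$ is invariant by Lemma \ref{lemma2.3}. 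A general finely open $V\subset U$ can then be reached by first shrinking to a regular finely open $W$ with $V\subset W\subset U$ (using $r(\cdot)$) and then restricting; alternatively one invokes the band property directly on $V$ after checking that every fine-potential part of $u_{|V}$ would lift to a fine potential $\preccurlyeq u$ on a regular finely open neighborhood, contradicting invariance of $u$.

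For (b): write $u=p+h$ with $p\in\cal P(U)$, $h\in\cal H_i(U)$ (the Riesz-type decomposition relative to the band $\cal P(U)$). By part (a), $h_{|U_j}$ is invariant; by hypothesis $u_{|U_j}$ is invariant; hence $p_{|U_j}=u_{|U_j}-h_{|U_j}$ is a difference of invariant functions in $\cal S(U_j)$, and since it is also a fine potential on $U_j$ (restriction of a fine potential to a finely open set is a fine potential — every finely subharmonic minorant on $U_j$ extends by $0$ to a finely subharmonic minorant on $U$, using that $U_j$ is regular so the fine boundary values vanish), it must be $0$. Thus $p=0$ on each $U_j$, so $p\equiv0$ on $U$, i.e.\ $u=h$ is invariant.

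For (c): monotone convergence gives that $\widehat\inf_\alpha u_\alpha\in\cal S(U)$, and it is the infimum in the band $\cal H_i(U)$, which is closed under the $\widehat\inf$ operation of the $H$-cone $\cal S(U)$ because a band is itself a solid convex subcone stable under $\widehat\inf$; hence $\widehat\inf_\alpha u_\alpha$ is again invariant. For the set $V=\{\inf_\alpha u_\alpha<+\infty\}$: each $u_\alpha$ is finely continuous (finely hyperharmonic functions are finely continuous by Theorem 9.10 of \cite{F1}), so $\{u_\alpha<k\}$ is finely open for each $k$, and $V=\bigcup_k\bigcap_\alpha\{u_\alpha<k\}$ — but a decreasing net means $\bigcap_\alpha\{u_\alpha<k\}=\{\inf_\alpha u_\alpha<k\}$ need not be finely open directly, so instead I would argue that on $V$ the exceptional polar set where $\widehat\inf_\alpha u_\alpha<\inf_\alpha u_\alpha$ must be empty: if $x\in V$ then $\inf_\alpha u_\alpha$ is finite near $x$ in the fine sense (pick $\alpha_0$ with $u_{\alpha_0}(x)<\infty$; finely superharmonic functions are finite on a finely dense set and finely continuous, so $u_{\alpha_0}$, hence $\inf_\alpha u_\alpha\le u_{\alpha_0}$, is finite on a fine neighborhood of $x$ off a polar set), and there the $\widehat\inf$ coincides with the pointwise $\inf$ by the standard property of $\widehat{}\,$ on sets where the infimum is already finely hyperharmonic.

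The main obstacle I anticipate is part (a) for a general (not regular) finely open $V$: one needs to know that restriction preserves invariance without first passing through Lemma \ref{lemma2.3}, and the cleanest fix is the regularization trick $V\subset r(V^{\text{(some enlargement)}})\subset U$ combined with the fact that $r(A)\setminus A$ is polar, so that restricting from a regular finely open set to $V$ only removes a polar set and cannot create a fine-potential component.
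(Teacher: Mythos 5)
Your proposal for part (a) rests on a false claim: that invariance of $u$ on $U$ forces $\widehat R{}_u^{A}=u$ for every $A=b(A)\cap U$ with $U\setminus A$ finely open. This fails already for $A=\varnothing$, and more seriously for any $A$ whose closure in $\overline U$ lies in $U$: there $\widehat R{}_u^{A}$ is a \emph{fine potential} (Corollary \ref{cor3.11}), hence $\ne u$ whenever $u$ is invariant and nonzero. The correct characterization (\cite[Theorem 4.4]{F4}, cf.\ Theorem \ref{thm3.16}) only gives $\widehat R{}_u^{U\setminus V}=u$ for finely open $V$ with $\widetilde V\subset U$ (resp.\ $\overline V\subset U$), i.e.\ the swept-onto set must contain a collar around $\partial_fU$ — a restriction your arbitrary $V$ does not satisfy; the parenthetical justification (``$u-\widehat R{}_u^{A}$ is a fine potential dominated by $u$'') is likewise unfounded. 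The fallback ``regularization trick'' is circular: passing from a regular finely open $W\supset V$ down to $V$ by restriction is exactly statement (a) again. The paper's actual proof of (a) is of a different nature: it writes a candidate $p=G_V\mu\preccurlyeq u_{|V}$, extends $p$ to a finely continuous $f$ on $U$ vanishing off $U\cap r(V)$, shows $u=s+f$ with $s\in{\cal S}(U)$, and uses Mokobodzki's inequality \cite[Lemma 11.14]{F1} to produce a fine potential $\widehat R{}_f\preccurlyeq u$, which must vanish; a decomposition of $\mu$ into finite compactly supported pieces removes the auxiliary hypotheses.

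Part (b) invokes another false lemma: the restriction of a fine potential to a regular finely open subset need not be a fine potential. Classically already, $G_U(.,y)$ restricted to a disc $D$ with $\overline D\subset U$ and $y\notin\overline D$ is a positive harmonic function on $D$, and your justification (extend a finely subharmonic minorant by $0$ across $\partial_fU_j$) fails because there is no reason for that minorant to vanish at the fine boundary. What your reduction really shows is only that $p_{|U_j}$ is invariant on each $U_j$, and concluding $p=0$ from this is precisely statement (b) applied to $p$ — circular. The paper instead refines each $U_j$ by a cover $(V_{jk})_k$ with $\widetilde V_{jk}\subset U_j$ furnished by \cite[Theorem 4.4]{F4}, compares sweeping relative to $U$ and relative to $U_j$ to obtain $u=\widehat R{}_u^{U\setminus V_{jk}}$ on each $V_{jk}$, and then applies the same theorem to the countable cover $(V_{jk})_{j,k}$ of $U$; no such mechanism appears in your sketch. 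For (c), the step you omit is the decisive one: by Lemma \ref{lemma2.1} the net is \emph{specifically} decreasing ($u_\beta\preccurlyeq u_\alpha$), and all three assertions — invariance of $\widehat\inf_\alpha u_\alpha$, fine openness of $V$, and the everywhere (not merely q.e.) equality $\widehat\inf_\alpha u_\alpha=\inf_\alpha u_\alpha$ on $V$ — are then exactly \cite[c), p.\ 132]{F1}; stability of a band under $\widehat\inf$ of a merely pointwise-decreasing net, and the equality of $\widehat\inf$ with $\inf$ on $V$, are not standard facts one can cite without this specific-order input.
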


\begin{proof} (a) 
Let $p\in\cal P(V)$ satisfy $p\preccurlyeq u$ on $V$. Write
$p=G_V\mu$ (considered on $V$) in terms of the associated Borel
measure $\mu$ on $\Omega$ such that the inner measure $\mu_*(\complement V)=0$.
We shall prove that $p=0$. An extension of $\mu$ from $V$ to a larger
subspace of $\Omega$ by $0$ off $V$ will also be denoted by $\mu$.
Thus $p=G_V\mu$ extends to the fine potential $G_{U\cap r(V)}\mu$
on $U\cap r(V)$ and further extends to a finely continuous function
$f:U\longrightarrow[0,+\infty]$ which equals $0$ off $U\cap r(V)$.

Suppose to begin with that $\mu$ is finite and (after extension to $\Omega$)
carried by a Euclidean compact subset $K$ of $\Omega$ contained in $r(V)$.
Let $q:={\widehat R}{}_f$ (sweeping relative to $U$).
Then $f\le G_U\mu$ and hence $q\le G_U\mu<+\infty$ q.e.\ on
$U$, and so $q$ is a fine potential on $U$ along with $G_U\mu$. On the other
hand, $q\le u$ because $f\le u$ on $U$. In the first place, $f=p\le u$ on $V$
and hence $f\le u$ on $U\cap r(V)\subset U\cap\widetilde V$ by fine continuity
of $f$ and $u$. Secondly, $f=0$ on $U\setminus r(V)$.
Since $p\preccurlyeq u$ on $V$ we have $u=p+s$ on $V$ for a certain
$s\in\cal S(V)$. Since $U\cap r(V)\setminus V$ is polar, $s$ extends by fine
continuity to a similarly denoted $s\in\cal S(U\cap r(V))$, and we have
$u=G_{U\cap r(V)}\mu+s$ on $U\cap r(V)$. Since $\fine\lim G_{U\cap r(V)}\mu=f=0$
at $U\cap\partial_fr(V)\subset U\setminus r(V)$ we have
$\fine\lim s=u$ at $U\cap\partial_fr(V)$. It follows by \cite[Lemma 10.1]{F1}
that the extension of $s$ to $U$ by $u$ on $U\setminus r(V)$ is of class
$\cal S(U)$. Denoting also this extension by $s$ we have $u=s+f$ on $U$.
By Mokobodzki's inequality in our setting, see \cite[Lemma 11.14]{F1},
we infer that $q={\widehat R}_f\preccurlyeq u$. Since $u$ is invariant on $U$
and $q\in\cal P(U)$ it follows that $q=0$ and hence $p\le q=0$ on $V$,
showing that indeed $u_{|V}$ is invariant.

Dropping the above temporary hypothesis that $\mu$ be finite and carried by
a Euclidean compact subset of $r(V)$ we decompose $\mu$ in accordance with
\cite[Lemma 2.3]{F4} into the sum of a sequence of finite measures $\mu_j$
with Euclidean compact supports $K_j\subset r(V)$.
Since $G_V\mu_j\preccurlyeq G_V\mu=p\preccurlyeq u_{|V}$, the
result of the above paragraph applies with $\mu$ replaced by $\mu_j$.
It follows that $G_V\mu_j=0$ and hence $p=G_V\mu=\sum_jG_V\mu_j=0$. Thus
${\widehat R}{}_u^A$ is indeed invariant on $V$.

(b) For each index $j$ there is by \cite[Theorem 4.4]{F4} a countable
finely open cover $(V_{jk})_k$ of $U_j$ such that
$\widetilde V_{jk}\subset r(U_j)=U_j$ and (with sweeping relative to $U_j$)
$${\widehat R}{}_{u|U_j}^{U_j\setminus V_{jk}}=u|U_j$$
for each $k$. It follows that (with sweeping relative to $U$, resp.\ $U_j$)
$$u\ge{\widehat R}{}_u^{U\setminus V_{jk}}\ge{\widehat R}{}_{u|U_j}^{U_j\setminus V_{jk}}=u$$
on each $U_j$, so equality prevails here. In particular,
$u={\widehat R}{}_u^{U\setminus V_{jk}}$ on $V_{jk}$ for each $j,k$. Consequently,
$u$ is invariant according to the quoted
theorem applied to the countable cover $(V_{jk})_{jk}$ of $U$.

(c) For indices $\alpha,\beta$ with $\alpha<\beta$
we have $u_\beta\le u_\alpha$ and hence $u_\beta\preccurlyeq u_\alpha$
by Lemma \ref{lemma2.1}.
The claim therefore follows from \cite[c), p.\ 132]{F1}.
\end{proof}

Throughout the rest of the article, $U$ is supposed (in the absence of other
indication) to be a {\it{regular}} fine domain in the Greenian domain
$\Omega\subset\RR^n$, $n\ge2$. In particular, $U$ is a Euclidean $K_\sigma$
subset of $\Omega$. We proceed to introduce and study the natural topology on
the $H$-cone $\cal S(U)$ of non-negative finely superharmonic functions on $U$.

\begin{theorem}\label{thm2.6}
There exists a resolvent family $(W_\lambda)$ of kernels on $U$ which are
absolutely continuous with respect to a measure $\sigma$ on $U$ such that
${\cal S}(U)$ is the cone of excessive functions which are finite
$\sigma$-a.e.
\end{theorem}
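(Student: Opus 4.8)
The plan is to reduce the statement to a known functional-analytic construction: in Dynkin's and Hunt's theory of Markov processes, a standard $H$-cone which admits a suitable "Green" structure arises as the cone of excessive functions of a sub-Markovian resolvent. We already know from \cite{El1} that $\cal S(U)$ is a standard $H$-cone carrying the fine Green kernel $G_U$, and that it has a compact base $B$ in the natural topology. What remains is to manufacture an absolutely continuous resolvent $(W_\lambda)_{\lambda>0}$ on $U$ whose cone of excessive functions is exactly $\cal S(U)$, with the finiteness qualification "$\sigma$-a.e." reflecting that finely superharmonic functions may take the value $+\infty$ on a polar set.

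First I would fix a reference measure. Since $U$ is a Euclidean $K_\sigma$ subset of the Greenian domain $\Omega$, choose a finite measure $\sigma$ on $U$ which does not charge polar sets and whose closed support (in the fine topology) is all of $U$; for instance a suitably normalized restriction of a measure equivalent to Lebesgue measure, or $\sigma=G_U\nu$-type mass distributed over a countable fine-dense set. The key point is that the fine Green kernel $G_U(x,y)$ is, by the results of \cite{F2} invoked in the excerpt, jointly measurable, finite off the diagonal, and satisfies the domination principle; moreover $\sigma$ charges no polar set, so integrals $\int G_U(\cdot,y)\,d\sigma(y)$ are finite q.e. Then I would define the potential kernel $W_0 f(x)=\int_U G_U(x,y) f(y)\,d\sigma(y)$ and recover a resolvent from it: because $G_U$ is a proper kernel satisfying the complete maximum principle, Hunt's theorem (in the abstract form, e.g. via the construction $W_\lambda=(\mathrm{Id}+\lambda W_0)^{-1}W_0$ interpreted through the resolvent equation) yields a unique sub-Markovian resolvent family $(W_\lambda)_{\lambda>0}$ on $(U,\sigma)$ with $W_0=\lim_{\lambda\downarrow0}W_\lambda$ as its potential kernel. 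Each $W_\lambda$ is absolutely continuous with respect to $\sigma$ by construction, since $W_0$ is.

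Next I would identify the excessive functions. A function $s\ge0$ is $(W_\lambda)$-excessive iff $\lambda W_\lambda s\le s$ for all $\lambda$ and $\lambda W_\lambda s\uparrow s$ pointwise as $\lambda\to\infty$. On one hand, every $u\in\cal S(U)$ is finite $\sigma$-a.e. (indeed q.e., since $\sigma$ does not charge polar sets, and finely superharmonic functions are finite q.e.), and the fine supermedian property $u(x)\ge\int u\,d\eps_x^{\complement V}$ of the excerpt translates, via the standard correspondence between the fine Green kernel's resolvent and balayage, into excessivity of $u$ for $(W_\lambda)$; fine lower semicontinuity gives the required regularization. Conversely, an excessive function which is finite $\sigma$-a.e. is, by the Riesz-type decomposition for the resolvent, the increasing limit of potentials $W_0 f_n$, and each such potential is finely superharmonic $\ge0$ on $U$ (it is a fine Green potential of the measure $f_n\,d\sigma$, which charges no polar set); being an increasing limit of functions in $\cal S(U)$ and finite on a finely dense set, it lies in $\cal S(U)$. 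This double inclusion, combined with the finiteness bookkeeping, gives the asserted equality of cones.

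The main obstacle I anticipate is verifying that the \emph{fine} Green kernel $G_U$ genuinely satisfies the hypotheses of the abstract Hunt/Dynkin construction — in particular properness (so that $W_0 1<\infty$ on a large set, achievable after rescaling $\sigma$) and, crucially, the complete maximum principle relative to $\sigma$, which is exactly where the regularity assumption on $U$ and the deep results of \cite{F2} on $G_U$ enter. A secondary delicate point is matching the two notions of "smallness": the exceptional set in "finite $\sigma$-a.e." must be shown to be polar, and one must check that modifying an excessive function on a polar set does not leave the class, so that the representatives chosen in $\cal S(U)$ (finely continuous, hence Borel by Lemma \ref{lemma2.0}) are the right ones. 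Once these compatibility statements are in place, the identification of the excessive cone with $\cal S(U)$ is essentially formal.
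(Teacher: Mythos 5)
Your construction is genuinely different from the paper's, and the two points you yourself flag as ``obstacles'' are precisely the unproven core of the argument, so as it stands there is a real gap. (a) You build the potential kernel directly from the fine Green kernel, $W_0f=\int_U G_U(\cdot,y)f(y)\,d\sigma(y)$, and appeal to a Hunt-type existence theorem; but the complete maximum principle for this kernel relative to $\sigma$, together with the regularity hypotheses of such an abstract existence theorem (the space $U$ is not locally compact or metrizable in the fine topology, and the heuristic formula $W_\lambda=(\mathrm{Id}+\lambda W_0)^{-1}W_0$ does not by itself give positivity or sub-Markovianity, which is exactly what the CMP must deliver), is nowhere established in the fine setting and is not a citation-ready fact. (b) The inclusion ${\cal S}(U)\subset\{\text{excessive functions}\}$ is asserted via an unspecified ``standard correspondence'' between fine balayage and the resolvent just constructed; what is actually needed is that every $u\in{\cal S}(U)$ --- including the invariant functions, which are not fine potentials --- is an increasing limit of potentials $W_0f_n$ (or at least is supermedian with excessive regularization equal to $u$), and you give no argument for this.

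The paper's proof is designed to avoid both difficulties by never invoking $G_U$ or a maximum principle. It starts from a strict bounded continuous potential $G_\Omega\tau$ on $\Omega$ and the associated resolvent $(V_\lambda)$, whose excessive cone is ${\cal S}(\Omega)\cup\{+\infty\}$ by \cite{CC}; it then defines on $U$ the subordinate kernel $Wf=V{\bar f}-\widehat R{}_{V{\bar f}}^{\complement U}$ and obtains the resolvent $(W_\lambda)$ with potential kernel $W$ from the theorem of Boboc--Bucur \cite[Theorem 2.5]{BB}, so no CMP has to be verified. The identification of the excessive cone is then done in both directions by approximation: every $(W_\lambda)$-excessive function is an increasing supremum of potentials $Wf_j$ by \cite{DM}, and each $Wf_j=V{\bar f}_j-\widehat R{}_{V{\bar f}_j}^{\complement U}$ lies in ${\cal S}(U)$; conversely every $u\in{\cal S}(U)\cup\{+\infty\}$ is, by Fuglede's localization theorem \cite[Theorem 3]{F3a}, an increasing supremum of functions $s_j-\widehat R{}_{s_j}^{\complement U}$, each of which is excessive for $(W_\lambda)$. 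Finally $\sigma=\tau_{|U}$, and absolute continuity and the ``finite $\sigma$-a.e.'' statement are immediate since $\tau$ charges no polar set. To salvage your route you would have to prove the complete maximum principle for the fine Green kernel and quote a precise abstract existence theorem for resolvents applicable here --- substantial extra work that the subordination argument sidesteps.
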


\begin{proof}
Let $p=G_\Omega\tau$ be a strict bounded continuous potentiel on the Greenian domain
$\Omega$ in $\RR^n$. Then the measure $\tau$ does not charge the polar sets
and we have $\tau(\omega)>0$ for any fine open subset of $\Omega$.
Denote by $V$ the Borel measurable kernel
on $\Omega$ defined by
$$Vf(x)=\int G_\Omega(x,y)f(y)d\tau(y)$$
for any Borel measurable function $f\ge 0$ in $\Omega$
and for $x\in \Omega$, and by $(V_\lambda)$ the resolvent family of kernels
whose kernel potential is $V$. According to \cite[Proposition 10.2.2, p. 248]{CC}, the cone of
of excessive functions of the resolvent $(V_\lambda)$ is the cone ${\cal S}(\Omega)\cup\{+\infty\}$
(and hence ${\cal S}(\Omega)$ is the cone cone of excessive functions of $(V_\lambda)$ which
are finite $\tau$-a.s).
Define a kernel $W$ on $U$ by
$$Wf=V{\bar f}-{\widehat R}{}_{V{\bar f}}^{\complement U}$$ (restricted to $U$)
for any Borel measurable function $f\ge 0$ in $U$,
where ${\bar f}$ denotes the extension of $f$ to $\Omega$ by $0$ in
$\Omega\setminus U$. Then  by \cite[Theorem 2.5]{BB} there exists a unique
resolvent family $(W_\lambda)$ of Borel measurable kernels having the potential
kernel $W$.

We proceed to determine the excessive functions for the resolvent
$(W_\lambda)$. Every superharmonic function $s\ge0$ on ${\cal S}(\Omega)$
is excessive for the resolvent $V_\lambda$,  and hence there exists by
\cite[Th\'eor\`eme 17, p.\ 11]{DM} an increasing sequence $(f_j)$ of
bounded Borel measurable functions $\ge 0$ such that $s=\sup Vf_j$.
It follows that $s-{\widehat R}{}_s^{\complement U}=\sup_j W(g_j)$, where $g_j$ denotes the restriction of $f_j$ to $U$. This shows that $s-{\widehat R}{}_s^{\complement U}$ is excessive for $(W_\lambda)$. For any $u\in\cal S(U)\cup\{+\infty\}$ there exists by \cite[Theorem 3]{F3a} a sequence of superharmonic functions $s_j\ge 0$
on $\Omega$ such that
$$u=\sup_j(s_j-{\widehat R}{}_{s_j}^{\complement U}),$$
where the sequence $(s_j-{\widehat R}{}_{s_j}^{\complement U})$ is increasing and hence $u$ is excessive for $(W_\lambda)$. Conversely, let $u$ be excessive
for $(W_\lambda)$. According to \cite[Th\'eor\`eme 17, p.\ 11]{DM} there exists
an increasing sequence of potentials $(Wf_j)$ of bounded Borel measurable functions $\ge0$ such that $u=\sup_jWf_j$. For each $j$ we have
$W(f_j)=V({\bar f}_j)-{\widehat R}{}_{V({\bar f}_j)}^{\complement U}$. But
 $V(\bar f_j)$ is finite and continuous on $U$, and so
${\widehat R}{}_{V(\bar f_j)}^{\complement U}$ is finely harmonic on $U$. It follows
that $W(f_j)\in {\cal S}(U)$. Consequently, $u$ is finely hyperharmonic on $U$, that is,
$u \in {\cal S}(U)\cup\{+\infty\}$.

Let $\sigma$ be the restriction of the measure $\tau$ to $U$. Then for any
$A\in \cal B(U)$ (the finely Borel $\sigma$-algebra on $U$)  such that $\sigma(A)=0$
we have $W1_A=(V\overline{1_A})_{|U}=0$, hence the resolvent  $(W_\lambda)$ is absolutely
continuous with respect to $\sigma$. Since $\tau$ does not charge the polar sets, we
see that $\cal S(U)$ is the cone of excessive functions which are finite $\sigma$-a.e.
This completes the proof of Theorem \ref{thm2.6}.
\end{proof}
	
It follows from Theorem \ref{thm2.6} by  \cite[Theorem 4.4.6]{BBC} that
${\cal S}(U)$ is a standard $H$-cone of fonctions on $U$.
Following \cite[Section 4.3]{BBC} we give ${\cal S}(U)$ the natural topology.
This topology on ${\cal S}(U)$ is metrizable and induced by the weak topology
on a locally convex topological vector space in which ${\cal S}(U)$ is embedded
as a proper convex cone. This cone is well-capped with compact caps, but we show that the cone ${\cal S}(U)$ even has a compact base, and that is crucial for our investigation. We shall need the following results from\cite{El1}:

\begin{theorem}\label{thm2.7} \cite[Lemme 3.5]{El1}. There exists a sequence
$(K_j)$ of  Euclidean compact subsets of \,$\Omega$
contained in $U$ and a polar set $P\subset U$ such that

{\rm{1.}} $U=P\cup\bigcup_j K'_j$, where $K'_j$ denotes the fine interior of
\,$K_j$.

{\rm{2.}} For any $j$ the restriction of any function from ${\cal S(U)}$ to
$K_j$ is l.s.c.\ in the Euclidean topology.
\end{theorem}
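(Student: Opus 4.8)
The plan is to establish the two assertions almost independently — the first by a soft fine-topological argument, the second by combining a Lusin-type (quasi-continuity) argument with the resolvent representation used in the proof of Theorem~\ref{thm2.6}.

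Assertion~(1) is purely fine-topological. Since $U$ is finely open, every $x\in U$ has a fine neighbourhood whose fine closure is contained in $U$ (the fine-neighbourhood-base property recalled in Section~\ref{sec1}, cf.~\cite{L}); intersecting it with the fine interior of a small Euclidean closed ball centred at $x$ gives a finely open $V\ni x$ whose fine closure $\widetilde V$ is a Euclidean compact subset of $U$. The sets $V$ so obtained form a fine-open cover of $U$, so by the quasi-Lindel\"of principle there are a countable subfamily $(V_\ell)_\ell$ and a polar set $P_0\subset U$ with $U=P_0\cup\bigcup_\ell V_\ell$. Put $L_\ell:=\widetilde{V_\ell}$, a Euclidean compact subset of $U$; these sets will be shrunk below.

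For Assertion~(2) I first reduce $\mathcal S(U)$ to a countable family of ``building blocks''. By the proof of Theorem~\ref{thm2.6}, every $u\in\mathcal S(U)$ is the pointwise supremum of an increasing sequence of functions $Wf$ with $f$ bounded Borel $\ge0$ on $U$, and there $Wf=q-\widehat R{}_q^{\complement U}$ with $q:=V\bar f$ a bounded continuous potential on $\Omega$ (bounded by $\|f\|_\infty\,G_\Omega\tau$, continuous by the continuity principle). As the resolvent $(W_\lambda)$ of Theorem~\ref{thm2.6} is absolutely continuous with respect to $\sigma$, a countable family $(f_m)$ of such functions suffices: every $u\in\mathcal S(U)$ is the pointwise supremum of some subsequence of $(w_m)_m$, where $w_m:=Wf_m=q_m-\widehat R{}_{q_m}^{\complement U}$, $q_m:=V\bar f_m$. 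Since a pointwise supremum of Euclidean l.s.c.\ functions is Euclidean l.s.c., it now suffices to produce compact sets $K_j\subset U$, covering $U$ up to a polar set, on each of which every $w_m$ is Euclidean \emph{continuous}. Here is the crux. Each $q_m$ is continuous on $\Omega$ and each $\widehat R{}_{q_m}^{\complement U}$ is superharmonic on $\Omega$, hence quasi-continuous for the Green capacity; choosing, for each $i$, an open set $G_i\subset\Omega$ with $\capa(G_i)<1/i$ off which all the $q_m$ and all the $\widehat R{}_{q_m}^{\complement U}$ (hence all the $w_m$) are Euclidean continuous, one faces the difficulty that $\Omega\setminus G_i$ is merely closed and need not lie in $U$. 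The remedy is to intersect \emph{inside} the compact sets $L_\ell\subset U$: put $K_{\ell,i}:=L_\ell\setminus G_i$, a compact subset of $U$ (being closed in $L_\ell$) on which every $w_m$ is Euclidean continuous. Since $\capa(L_\ell\setminus K_{\ell,i})\le\capa(G_i)<1/i$, the Borel set $L_\ell\setminus\bigcup_iK_{\ell,i}$ has zero Green capacity and is therefore polar. Finally $V_\ell\cap K_{\ell,i}$ is a finely open subset of $K_{\ell,i}$, hence contained in its fine interior $K_{\ell,i}'$, so $V_\ell$ is covered up to a polar set by $\bigcup_iK_{\ell,i}'$; together with $U=P_0\cup\bigcup_\ell V_\ell$ this gives $U=P\cup\bigcup_{\ell,i}K_{\ell,i}'$ with $P$ a countable union of polar sets, hence polar. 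Relabelling $(K_{\ell,i})$ as $(K_j)$ finishes the proof.

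I expect the real obstacle to be exactly the clash of the two approximation schemes: quasi-continuity delivers large compact sets approximating $U$ only in capacity, which need not lie in $U$, while the conclusion demands compact subsets of $U$; the fix is to run the quasi-continuity argument \emph{within} the compact fine-neighbourhood closures $L_\ell$ from Assertion~(1), after which one must still check that the fine interiors of the resulting smaller compact sets cover $U$ modulo a polar set. A lesser, routine technicality is the reduction from the uncountable cone $\mathcal S(U)$ to a countable generating family $(w_m)$, which relies on the resolvent representation in the proof of Theorem~\ref{thm2.6} together with the absolute continuity of $(W_\lambda)$ with respect to $\sigma$.
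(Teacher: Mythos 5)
Your argument breaks down at the one point that actually distinguishes Theorem \ref{thm2.7} from the weaker, known nest statements, namely the claim that the \emph{fine interiors} of the compacts cover $U$ up to a polar set. You write that $V_\ell\cap K_{\ell,i}$ is a finely open subset of $K_{\ell,i}$ and hence contained in $K_{\ell,i}'$; but $K_{\ell,i}=L_\ell\setminus G_i$ with $G_i$ \emph{Euclidean open}, so $V_\ell\cap K_{\ell,i}=V_\ell\setminus G_i$ is the intersection of a finely open set with a finely \emph{closed} set and is not finely open. A point $x\in V_\ell\setminus G_i$ lies in $K_{\ell,i}'$ only if $G_i$ is thin at $x$, and an open set is in general non-thin at a large (non-polar) part of its boundary (think of a ball and its bounding sphere). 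Since the quasi-continuity argument controls only $\capa(G_i)$, not where $G_i$ sits, nothing guarantees that, off a polar set, each $x$ eventually avoids the non-thinness set of $G_i$; so the covering $V_\ell\subset P\cup\bigcup_i K_{\ell,i}'$ is unproved. This is exactly the delicacy emphasized in Remark \ref{remark2.10}: a nest $(K_j)$ with $U=P\cup\bigcup_jK_j$ and l.s.c.\ restrictions (as furnished by Le Jan or Beznea--Boboc, or by your capacity argument) need \emph{not} satisfy $U=P\cup\bigcup_jK_j'$, and the remark exhibits a nest for which $\bigcup_jK_j\setminus\bigcup_jK_j'$ is non-polar. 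So your construction proves only the weaker statement. A secondary slip: the fine closure $\widetilde V_\ell$ is finely closed but not Euclidean closed in general, so it need not be a Euclidean compact subset of $U$; to get compact sets one should invoke the fact that each point has a base of \emph{Euclidean compact} fine neighbourhoods contained in $U$ (the Brelot property, cf.\ \cite{F3}, used in the proof of Proposition \ref{prop3.1b}).

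The reduction to a countable family $(w_m)$ is acceptable in substance (it is the existence of a countable increasingly dense family of universally continuous elements in the standard $H$-cone $\cal S(U)$, cf.\ \cite{BBC} and Theorem \ref{thm2.6}), and the continuity of $q_m=V\bar f_m$ and quasi-continuity of $\widehat R{}_{q_m}^{\complement U}$ are fine. The missing idea is to obtain the compacts directly as fine neighbourhoods rather than to extract fine interiors afterwards: for quasi-every $x\in U$ the Brelot property yields a Euclidean compact fine neighbourhood $H\subset U$ of $x$ on which \emph{all} the countably many finely continuous functions $w_m$ are simultaneously Euclidean continuous; then $x\in H'$ by construction, and the quasi-Lindel\"of principle extracts a countable subfamily $(K_j)$ with $U=P\cup\bigcup_jK_j'$, every $u\in\cal S(U)$ being l.s.c.\ on each $K_j$ as a countable supremum of the continuous $w_m$'s. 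With that replacement your outline becomes a proof along the lines of \cite[Lemme 3.5]{El1}; as it stands, the covering assertion (1) for the sets you actually construct is a genuine gap.
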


\begin{cor}\label{cor2.8} There exists a sequence $(H_j)$ of Euclidean compact
subsets of \,$U$, each non-thin at any of its points, and a polar set $P$
such that

{\rm{1.}} $U=P\cup\bigcup_j H_j$.

{\rm{2.}} For any $j$ the restriction of any function from
${\cal S}(U)$ to $H_j$ is l.s.c.\ in the Euclidean topology.
\end{cor}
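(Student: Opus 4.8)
The plan is to derive Corollary~\ref{cor2.8} from Theorem~\ref{thm2.7} by refining the compact sets $K_j$. Everything reduces to the following fact $(\ast)$: \emph{every Euclidean compact set $L\subset\Omega$ can be written as $L=N\cup\bigcup_mH_m$, where $N$ is polar and each $H_m$ is a Euclidean compact subset of $L$ which is non-thin at each of its own points.} Granting $(\ast)$, the corollary is immediate. Apply $(\ast)$ to each $K_j$ furnished by Theorem~\ref{thm2.7}, obtaining a polar set $N_j$ and compact sets $H_{j,m}\subset K_j$, each non-thin at each of its points; put $P':=P\cup\bigcup_jN_j$, which is polar as a countable union of polar sets, and relabel $(H_{j,m})_{j,m}$ as a single sequence $(H_j)$. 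Since $K'_j\subset K_j$,
$$U=P\cup\bigcup_jK'_j\ \subset\ P\cup\bigcup_jK_j\ =\ P'\cup\bigcup_{j,m}H_{j,m}\ \subset\ U,$$
which gives~1; and~2 holds because $s_{|K_j}$ is l.s.c.\ in the Euclidean topology for every $s\in{\cal S}(U)$ by Theorem~\ref{thm2.7}, while the restriction of a lower semicontinuous function to a subset is again lower semicontinuous.

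It remains to prove $(\ast)$. First I would record two standard facts: for a Euclidean compact set $L$ one has $b(L)\subset L$ (a fine limit point of $L$ is a fortiori a Euclidean limit point of $L$, hence lies in the closed set $L$), and $L\setminus b(L)$, the set of points at which $L$ is thin, is polar. Thus it is enough to cover $b(L)$, modulo a polar set, by Euclidean compact subsets of $L$ each non-thin at each of its points. The awkward point is that $b(L)$ — although it is itself non-thin at each of its points, since $b(L)=b(b(L))$ — is only a $G_\delta$, need not be $\sigma$-compact, and the two naive remedies fail: replacing a compact $C\subset L$ by $b(C)$ restores the self-non-thinness but loses compactness, whereas taking the Euclidean closure of $b(C)$ can reintroduce points of self-thinness (this is the mechanism behind Example~\ref{example2.4}).

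For the construction of the decomposition of $b(L)$ I would fix a strict, bounded, continuous potential $p=G_\Omega\tau$ on $\Omega$ (as in the proof of Theorem~\ref{thm2.6}), so that $\tau$ charges every finely open set and $b(C)=\{x:{\widehat R}{}_p^C(x)=p(x)\}$ for every Euclidean compact $C$, and then attempt an exhaustion of $b(L)$ by compact subsets, using the inner regularity (capacitability) of the Green capacity, the fact that the capacitary measure of $L$ is carried by $b(L)$, and the quasi-Lindel\"of property, discarding at each stage into $N$ the polar set of self-thinness points of the compact piece obtained and passing to the fine-boundary remainder. The substantial point — which I expect to be the main obstacle, and which is really the technical heart of the corollary — is to show that such an exhaustion leaves only a polar remainder: because the Green capacity is not additive, a single exhaustion step does not suffice, and one has to iterate while controlling at each step the fine-boundary discrepancy, checking that the process terminates modulo a polar set. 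Everything else (the deduction of Corollary~\ref{cor2.8} from $(\ast)$, and the elementary facts about bases of compact sets) is routine.
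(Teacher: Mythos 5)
Your reduction of Corollary~\ref{cor2.8} to the claim $(\ast)$ is sound as far as it goes, but the proposal has a genuine gap: $(\ast)$ is never proved. What you call the ``technical heart'' --- that a capacitary exhaustion of $b(L)$ by compact sets non-thin at each of their points leaves only a polar remainder --- is not a routine matter of capacitability and quasi-Lindel\"of; it is essentially the content of Ancona's theorem \cite{An} (the solution of a conjecture of Choquet on capacity and thinness), and even granting Ancona's theorem one still needs the ``repeated application'' argument to pass from an $\eps$-loss in capacity to a polar exceptional set, precisely because capacity is not additive --- the very difficulty you flag and leave open. The paper does record (in the remark following Remark~\ref{remark2.10}) that Corollary~\ref{cor2.8} \emph{can} be recovered along your lines via Ancona's theorem, but as a self-contained proof your proposal is incomplete: it reduces the corollary to a deep external result and then only sketches, with an acknowledged unresolved step, how that result might be obtained.

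The gap is avoidable, and the paper's own proof shows how: you discard exactly the extra strength of Theorem~\ref{thm2.7} that makes a short argument possible. Theorem~\ref{thm2.7} gives $U=P\cup\bigcup_jK'_j$ with the \emph{fine interiors} $K'_j$, whereas your argument only retains $U\subset P\cup\bigcup_jK_j$ and then has to repair, inside each compact $K_j$, the self-thinness that the fine interior has already eliminated. Instead, work inside the finely open sets $K'_j$: each $K'_j$ is regular, has countably many fine components $U_j^m$ (likewise regular), and fixing $y_{j,m}\in U_j^m$ the superlevel sets $H_{j,m,n}=\{x\in U_j^m: G_{U_j^m}(x,y_{j,m})\ge 1/n\}$ of the fine Green function are Euclidean compact and non-thin at each of their points (by \cite[Theorem 12.6]{F1}, cf.\ \cite{F2}), and they exhaust $U_j^m$. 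Since $H_{j,m,n}\subset K'_j\subset K_j$, property~2 is inherited from Theorem~\ref{thm2.7} exactly as in your deduction, and $U=P\cup\bigcup_{j,m,n}H_{j,m,n}$ gives property~1 with the same polar set $P$ --- no new polar exceptional set, no capacitary iteration, and no appeal to Ancona's theorem.
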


\begin{proof} Write $U=P\cup\bigcup_j K'_j$ as in Theorem \ref{thm2.7}. Recall
that the fine interior of a subset of $\Omega$ is regular.  For each $j$ let
$(U_j^m)$ denote the fine components of $K'_j$; they are likewise regular.
For each couple
$(j,m)$ let $y_{j,m}$ be a point of $U_j^m$. And for each integer $n>0$ put $H_{j,m,n}=\{x\in U_j^m: G_{U_j^m}(x,y_{j,m})\ge \frac{1}{n}\}$. The sets $H_{j,m,n}$ are
Euclidean compact and non-thin at any of its points (in view of
\cite[Theorem 12.6]{F1}), and we have $U_j^m=\bigcup_nH_{j,m,n}$.
The sequence $(H_{j,m,n})$ and the polar set $P$ have the stated properties.
\end{proof}

\begin{remark}\label{remark2.10}
 The existence of a sequence $(K_j)$ of compact subsets of $U$ and a polar
set $P$ with $U=P\cup\bigcup_jK_j$ such that a given finely
continuous function (in particulier a superharmonic function) is continuous
relative to each $K_j$ follows from the pioneering work of Le Jan
\cite{LeJ1,LeJ2, LeJ3}, which applies more generally to the excessive functions
of the resolvant associated with the Hunt process. The weaker form of 1.\ in
our Theorem \ref{thm2.7} in which $U=P\cup\bigcup K_j'$ is replaced by the
condition $U=P\cup\bigcup K_j$, is a consequence of \cite[Corollaire 1.6]{BeB}
together with the existence of a family of universally continuous elements
which is increasingly dense in $\cal S(U)$.
In the present case our Theorem
\ref{thm2.7} is stronger than that of Beznea and Boboc.
In fact, our result is not a consequence of that of Beznea and Boboc because
 for a nest $(K_j)$ of $U$ the set $\bigcup_jK_j\setminus \bigcup_jK'_j$ is not
necessarily polar, as it is seen by the following example:

\smallskip
\textit{Example.} Let $A$ be a compact non-polar subset of $\Omega$ with empty
fine interior (for example $A$ can be a compact ball in some hyperplane in
$\RR^n$ such that $A\subset\Omega$). Let $\Omega_1=\Omega\setminus A$. Then $\Omega_1$ is
open and there exists an increasing sequence $(B_j)$ of 
open subsets of $\Omega_1$ such that $\overline B_j\subset \Omega_1$
for every $j$ ($\overline B_j$ denoting the Euclidean closure of $B_j$)
and that $\bigcup_jB_j=\Omega_1$. For any $j$ write $K_j=\overline B_j\cup A$.
Clearly, $(K_j)$ is an increasing sequence of compact subsets of $\Omega$ with
$\bigcup_jK_j=\bigcup_j\overline B_j\cup A=\Omega_1\cup A=\Omega$. It suffices to show
that $K'_j\subset\overline B_j$ for every $j$, for then
$\bigcup_jK'_j\subset\bigcup_j\overline B_j=\Omega_1=\Omega\setminus A$ with $A$
non-polar. Let $x\in K'_j$. If $x\in A$ then $V:=\Omega\setminus\overline B_j$ is
an open neighborhood of $x$ and $V\cap\overline B_j=\varnothing$. On the other hand,
$W:=K'_j$ is a fine neighborhood of $x$ contained in $K_j$.
Then $W\cap V\subset K_j$ and $(W\cap V)\cap\overline B_j=\varnothing$,
hence $x\in W\cap V\subset A$. But $W\cap V$ is finely open and $A'=\varnothing$,
so actually $x\notin A$, and since $x\in K'_j\subset K_j=\overline B_j\cup A$ we have $x\in\overline B_j$. Because this holds for every
$x\in K'_j$ we indeed have $K'_j\subset\overline B_j$.
\end{remark}

\begin{remark}
One may recover Corollary \ref{cor2.8} from \cite[Corollaire 1.6]{BeB}.
In fact, let $(K_j)$ be a sequence of compact subsets of $U$ and let $A$ be a
polar set with $U=A\cup\bigcup_jK_j$ such that the restriction of any
function $u\in {\cal S}(U)$ to each $K_j$ is l.s.c. One may suppose that all
the compact sets $K_j$ are non-polar. By repeated application of Ancona's
theorem \cite{An} it follows that each $K_j$ is the union of a polar set
$A_j$ and sequence $(K_{j,k})_k$ of compact sets $K_{j,k}$, each of which is
non-thin at each of its points. The double sequence $(K_{j,k})_{j,k}$,
arranged as a single sequence $(H_l)$, together with the polar set
$P:=A\cup \bigcup_jA_j$, meet the requirements in Corollary \ref{cor2.8}.
\end{remark}

We shall now use the sequence $(H_j)$ from this corollary to define in
analogy with \cite{Mo} a locally compact topology on the cone
${\cal S}(U)$. For each $j$ let ${\cal C}_l(H_j)$ denote the space of l.s.c.\ functions on $H_j$
with values in ${\overline \RR}{}_+$, and provide this space with the
topology of convergence in graph (cf.\ \cite{Mo}). It is known that
${\cal C}_l(H_j)$ is a compact metrizable space in this topology.
Let $d_j$ denote a distance compatible with this topology. We define
a pseudo-distance $d$ on ${\cal S}(U)\cup \{+\infty \}$ by
$$d(u,v)=\sum_j\frac{1}{2^j\delta({\cal C}_l(H_j))}d_j(u_{|H_j},v_{|H_j})$$
for each couple $(u,v)$ of functions from ${\cal S}(U)\cup \{+\infty \}$,
where $\delta({\cal C}_l(H_j))$ denotes the diameter of ${\cal C}_l(H_j)$.
Since two finely hyperharmonic functions are identical if the coincide quasi-everywhere it follows that $d$  is a true distance on ${\cal S}(U)\cup\{+\infty\}$. We denote by ${\cal T}$ the topology on ${\cal S}(U)\cup \{+\infty\}$ defined by  the distance $d$.

For any filter ${\cal F}$ on ${\cal S}(U)\cup\{+\infty\}$ we write
$$\underset{\cal F}{\lim{\widehat{\inf}}}
=\sup_{M\in {\cal F}}\underset{u\in M}{\widehat\inf}\,u,$$
where the l.s.c.\ regularized ${\widehat\inf}_{u\in M}u$ is taken with respect to the fine topology.

\begin{theorem}\label{thm2.9}\cite[Th\'eor\`eme 3.6]{El1}.
 The cone ${\cal S}(U)\cup \{+\infty\}$ is compact in the topology
${\cal T}$. For any convergent filter ${\cal F}$ on ${\cal S}(U)\cup\{+\infty\}$ we have
 $$\lim_{\cal F}=\underset{\cal F}{\lim\widehat{\inf}}.$$
\end{theorem}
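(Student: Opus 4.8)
The plan is to realise $\mathcal T$ as the topology induced by an embedding into a compact metrizable space, and then to identify the closure of the embedded cone together with the limits of its convergent filters. Put $\calm:=\prod_j{\cal C}_l(H_j)$, which is compact metrizable, being a countable product of such spaces. The map $\iota:u\longmapsto(u_{|H_j})_j$ carries $\mathcal S(U)\cup\{+\infty\}$ into $\calm$, and by the very definition of $d$ it is a homeomorphism of $\bigl(\mathcal S(U)\cup\{+\infty\},\mathcal T\bigr)$ onto its image; it is injective because $U=P\cup\bigcup_jH_j$ with $P$ polar, so two members of $\mathcal S(U)\cup\{+\infty\}$ agreeing on every $H_j$ agree quasi-everywhere on $U$, hence agree. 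Thus it suffices to show that $\iota\bigl(\mathcal S(U)\cup\{+\infty\}\bigr)$ is closed in $\calm$ and to locate each limit. I would prove the single statement: \emph{for every filter $\calf$ on $\mathcal S(U)\cup\{+\infty\}$ along which every coordinate $u_{|H_j}$ converges in ${\cal C}_l(H_j)$ — which holds for every ultrafilter (compactness of ${\cal C}_l(H_j)$) and for every $\mathcal T$-convergent filter (continuity of the projections) — the function $w:=\lim\widehat\inf_\calf$ lies in $\mathcal S(U)\cup\{+\infty\}$ and satisfies $w_{|H_j}=g_j$ for all $j$}, where $g_j$ denotes the limit of $u_{|H_j}$ along $\calf$ in ${\cal C}_l(H_j)$. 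Granting this, $\iota(w)=(g_j)_j$, so $\calf\to w$ in $\mathcal T$; hence every ultrafilter converges to a point of $\mathcal S(U)\cup\{+\infty\}$, so the cone is compact, and if $\calf\to u$ then also $\calf\to w$, whence $u=w=\lim\widehat\inf_\calf$ by uniqueness of limits.

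The membership $w\in\mathcal S(U)\cup\{+\infty\}$ rests on the fine Fundamental Convergence Theorem. For fixed $M\in\calf$ the finite infima of members of $M$ form a downward directed family of finely hyperharmonic functions (a finite infimum of finely hyperharmonic functions is finely hyperharmonic), so its fine lower semicontinuous regularization $s_M:=\widehat\inf_{u\in M}u$ is finely hyperharmonic and coincides with $\inf_{u\in M}u$ quasi-everywhere on $U$, by \cite{F1}. As $M$ runs over $\calf$ the functions $s_M$ increase along a directed set, so $w=\sup_Ms_M$ is again finely hyperharmonic by \cite{F1}; since $w\ge0$ and $U$ is finely connected, $w$ is either a member of $\mathcal S(U)$ or the constant $+\infty$.

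For the identification $w_{|H_j}=g_j$, one inequality is immediate from Corollary \ref{cor2.8}. Fix $y\in H_j$ and $M\in\calf$: the function $s_M\in\mathcal S(U)\cup\{+\infty\}$ satisfies $s_M\le u$ on $U$ for every $u\in M$, and its restriction to $H_j$ is lower semicontinuous for the Euclidean topology; writing the graph limit as $g_j(y)=\sup_{W\ni y}\liminf_\calf\inf_{z\in W\cap H_j}u(z)$ (supremum over Euclidean neighbourhoods $W$ of $y$) and inserting $u\ge s_M$, the Euclidean lower semicontinuity of $s_M$ on $H_j$ yields $g_j(y)\ge s_M(y)$, hence $g_j(y)\ge w(y)$ on taking the supremum over $M$. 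The reverse inequality $g_j\le w_{|H_j}$ is the heart of the proof, and I expect it to be the main obstacle. One begins with $g_j\le\liminf_\calf u$ pointwise on $H_j$ (the graph limit never exceeds the pointwise lower limit) together with $\liminf_\calf u=w$ quasi-everywhere on $U$; the latter follows from $s_M=\inf_{u\in M}u$ off a polar set and from the fact that $\widehat\inf_{u\in M}u$ coincides with the corresponding infimum over a countable subfamily of $M$ — for a sequential $\calf$ this is merely that a countable union of polar sets is polar, which already settles the compactness assertion, the general filter being handled by reduction to sequences (or directly, as in \cite{El1}). Consequently $g_j\ge w_{|H_j}$ everywhere, while $g_j=w$ off a polar subset $E$ of $H_j$, and $g_j$ and $w_{|H_j}$ are both Euclidean lower semicontinuous on $H_j$. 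The passage to equality now uses fine continuity of $w$ and the fact that each $H_j$ is non-thin at each of its points: for $y\in H_j$ with $w(y)<\infty$ and $\eps>0$, the set $\{z\in H_j\setminus E:\,w(z)<w(y)+\eps\}$ is the intersection of $H_j$ with a fine neighbourhood of $y$ minus a polar set, hence non-thin at $y$, so $y$ lies in its Euclidean closure and $g_j(y)\le\liminf_{z\to y,\,z\in H_j}g_j(z)\le w(y)+\eps$, the last step by approaching $y$ within that set, where $g_j=w$. Letting $\eps\downarrow0$, and treating $w(y)=\infty$ trivially, gives $g_j\le w_{|H_j}$ and closes the argument, which overall follows the lines of \cite[Th\'eor\`eme 3.6]{El1}.
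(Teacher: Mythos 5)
Your proposal uses the same circle of ideas as the paper's proof: the sets $H_j$ of Corollary \ref{cor2.8}, the Mokobodzki description of a graph limit in ${\cal C}_l(H_j)$ as $\sup_{M\in{\cal F}}$ of the Euclidean l.s.c.\ regularizations of $\inf_{u\in M}u$ on $H_j$, the fine fundamental convergence theorem to place $w=\lim\widehat\inf_{\cal F}$ in ${\cal S}(U)\cup\{+\infty\}$, and the non-thinness-plus-fine-continuity argument to upgrade a comparison on $H_j$ from quasi-everywhere to everywhere; the embedding into $\prod_j{\cal C}_l(H_j)$ is just a repackaging of the metric $d$. The genuine difference is where the key identification $g_j=w_{|H_j}$ is performed. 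You compare the limit objects and route the inequality $g_j\le w_{|H_j}$ through the statement ``$\liminf_{\cal F}u=w$ quasi-everywhere'', which requires one polar set working simultaneously for all $M\in{\cal F}$; as you yourself note, the convergence theorem supplies one polar set per member $M$, and an uncountable union of polar sets need not be polar, so this step is unjustified for filters without a countable base (in particular for the ultrafilters with which you open the argument). The paper avoids the problem by making the comparison member by member: for each fixed $M$ the Euclidean l.s.c.\ regularization of $(\inf_{u\in M}u)_{|H_j}$ equals $\bigl(\widehat\inf_{u\in M}u\bigr)_{|H_j}$ everywhere on $H_j$ --- one inequality because $\widehat\inf_{u\in M}u$ is Euclidean l.s.c.\ on $H_j$ and minorizes the infimum, the other by exactly your non-thinness/fine-continuity argument applied to $\widehat\inf_{u\in M}u$ with its own polar set --- and only then takes the supremum over $M$, so no exceptional sets accumulate and arbitrary coordinatewise-convergent filters (ultrafilters as well as ${\cal T}$-convergent filters) are treated at once.

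Concretely: your compactness argument is safe, since ${\cal T}$ is metrizable and your countably-based (sequential) case is complete; but the second assertion, for an arbitrary convergent filter, is left hanging on an unproven ``reduction to sequences''. Such a reduction can be carried out (for the upper bound pick $v_n\in M$ with $d(v_n,u)<1/n$, where $u$ is the limit, and apply the sequential case to $(v_n)$; for the lower bound use the countably based neighborhood filter of $u$, which is contained in ${\cal F}$), but it is simpler to observe that your final paragraph already contains everything needed: applied to each $\widehat\inf_{u\in M}u$ instead of to $w$, it yields the per-$M$ identity above and removes the only soft spot. That reorganization is precisely how the paper, following \cite[Th\'eor\`eme 3.6]{El1}, proceeds.
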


\begin{proof}
Let ${\cal U}$ be an ultrafilter on ${\cal S}(U)\cup\{+\infty\}$.
For any $M\in {\cal U}$ put $u_M=\inf_{u\in M}u$. For each $j$ the ultrafilter
base ${\cal U}_j$ obtained from $\cal U$ by taking restrictions to the
Euclidean compact $H_j$ from Corollary \ref{cor2.8}, converges in the compact
space ${\cal C}_l(H_j)$ to the function $u_j:=\sup_{M\in {\cal U}}{\widehat {u_M}}^j$
(where ${\widehat v}^j$ for $v\in{\cal S}(U)\cup\{+\infty\}$ denotes the
finely l.s.c.\ regularized of the restriction of $v$ to $H_j$).
The finely l.s.c.\ regularized ${\widehat{u_M}}$ of $u_M$ in $U$
is l.s.c.\ in $H_j$ by Theorem \ref{thm2.7} and  minorizes $u_M$, whence ${\widehat {u_M}}
\le {\widehat {u_M}}^j$. On the other hand there exists a polar set
$A\subset\Omega$ such that $u_M={\widehat{u_M}}$
in $U\setminus A$, and so ${\widehat {u_M}}^j\le {\widehat {u_M}}$
in $H_j\setminus A$. But for $x\in A$ we have
${\widehat {u_M}}^j(x)\le {\widehat {u_M}}(x)$
because ${\widehat {u_M}}$ is finely continuous on $U$ and $x$ is in the fine
closure of $H_j\setminus A$ since $H_j$ is non-thin at $x$. We conclude that
$u_j=\lim{\widehat{\inf}}_{\cal U}$ in $H_j$
for each $j$. Since the function
$u:=\lim{\widehat{\inf}}_{\cal U}$ belongs to ${\cal S}(U)\cup\{+\infty\}$
according to \cite[\S12.9]{F1} it follows that the filter ${\cal U}$ converges
to $u$ in the topology ${\cal T}$. This proves that
${\cal S}(U)\cup \{+\infty\}$ is compact in the topology ${\cal T}$.
\end{proof}

\begin{cor}\label{cor2.10}
The topology of convergence in graph
coincides with the natural topology on ${\cal S}(U)$.
\end{cor}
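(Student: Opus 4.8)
The plan is to show that the two metrizable topologies on $\cal S(U)$ — the natural topology coming from the standard $H$-cone structure (Theorem \ref{thm2.6} together with \cite[Theorem 4.4.6]{BBC}) and the topology $\cal T$ of convergence in graph introduced above — have the same convergent sequences, which suffices since both are metrizable. The decisive structural fact available to us is Theorem \ref{thm2.9}: $\cal S(U)\cup\{+\infty\}$ is compact for $\cal T$, and along any convergent filter $\cal F$ one has $\lim_{\cal F}=\lim\widehat{\inf}_{\cal F}$. I would pair this with the analogous description of convergence in the natural topology on a standard $H$-cone of functions: a net $(u_\alpha)$ converges to $u$ in the natural topology precisely when $u=\widehat{\liminf}_\alpha u_\alpha$ in the appropriate (here fine) sense, together with the corresponding $\limsup$-type control; this is exactly the content of the characterization of the natural topology in \cite[Section 4.3]{BBC}, which is why the limit operation $\lim\widehat{\inf}$ was singled out.

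First I would record that the identity map $\id\colon(\cal S(U)\cup\{+\infty\},\cal T)\to(\cal S(U)\cup\{+\infty\},\text{natural})$ is continuous. For this it is enough to take a filter $\cal F$ converging to $u$ in $\cal T$ and show it converges to $u$ in the natural topology. By Theorem \ref{thm2.9}, $\lim_{\cal F}$ in $\cal T$ equals $\sup_{M\in\cal F}\widehat{\inf}_{u\in M}u$, and this same expression is, by the description of the natural topology on a standard $H$-cone of functions, the natural-topology limit of $\cal F$ as well — the point being that in both settings the relevant regularization is the fine lower-semicontinuous regularization on $U$, so the two "liminf along $\cal F$" operations are literally the same operation on the underlying cone of functions. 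Hence $\id$ is continuous.

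Now comes the step I expect to be the crux: upgrading continuity of $\id$ to a homeomorphism. Here I would invoke compactness: by Theorem \ref{thm2.9} the source space $(\cal S(U)\cup\{+\infty\},\cal T)$ is compact, while the target $(\cal S(U)\cup\{+\infty\},\text{natural})$ is Hausdorff (two finely hyperharmonic functions agreeing q.e.\ coincide, so points are separated; metrizability from \cite{BBC} also gives Hausdorffness). A continuous bijection from a compact space to a Hausdorff space is a homeomorphism, so $\id$ is a homeomorphism, and restricting to $\cal S(U)$ (a subset of both spaces) gives that $\cal T$ and the natural topology agree on $\cal S(U)$. The main obstacle is the middle paragraph — one must be careful that the "$\lim\widehat{\inf}$" appearing in Theorem \ref{thm2.9} is genuinely the same limiting functional that governs convergence in the natural topology of the standard $H$-cone $\cal S(U)$, i.e.\ that the abstract $H$-cone regularization in \cite[Section 4.3]{BBC} is realized concretely by the fine l.s.c.\ regularization on $U$; this is where one uses that $\cal S(U)$ is a standard $H$-cone \emph{of functions} (via Theorem \ref{thm2.6} and \cite[Theorem 4.4.6]{BBC}), not merely an abstract $H$-cone, together with the fact (used already in the proof of Theorem \ref{thm2.9}) that each $H_j$ is non-thin at each of its points, so that fine regularization on $U$ is detected on $\bigcup_j H_j$ up to a polar set.
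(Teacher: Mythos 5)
Your overall strategy (compare the two metrizable topologies through the operation $\lim\widehat{\inf}$, using the compactness from Theorem \ref{thm2.9}) is in the spirit of why the statement is true, but the step you yourself identify as the crux is not actually carried out, and it is precisely the content of the result the paper invokes: the paper's proof is a one-line citation of Theorem \ref{thm2.9} together with \cite[Theorem 4.5.8]{BBC}. Concretely: from Theorem \ref{thm2.9} you know only that a ${\cal T}$-convergent filter ${\cal F}$ satisfies $\lim_{\cal F}=\lim\widehat{\inf}_{\cal F}$. To get continuity of the identity map into the natural topology you then assert that this same expression ``is the natural-topology limit of ${\cal F}$ as well'' --- but that presupposes that ${\cal F}$ converges naturally, which is what has to be proved. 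The characterization you gesture at (natural convergence $=$ the $\widehat{\liminf}$ identity \emph{together with} a $\limsup$-type control) makes the point: the liminf identity alone is a one-sided, lower-semicontinuity-type condition and does not force natural convergence, and you never verify the missing upper control from ${\cal T}$-convergence. So the continuity of $\id\colon({\cal S}(U)\cup\{+\infty\},{\cal T})\to$ (natural) is left unproven; filling it requires a genuine characterization of natural convergence in a standard $H$-cone (e.g.\ in terms of $\lim\widehat{\inf}$ along all finer filters or all subsequences), which is exactly what \cite[Theorem 4.5.8]{BBC} supplies.

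The compact-to-Hausdorff upgrade also does not go through as written. The natural topology is a topology on the standard $H$-cone ${\cal S}(U)$, not on ${\cal S}(U)\cup\{+\infty\}$; before your target is even a topological space you must extend the natural topology to the point $+\infty$ and prove Hausdorffness there, and separating $+\infty$ from the finite elements by natural neighborhoods is essentially the local compactness of the natural topology, which in the paper is only \emph{deduced} from the present corollary (Corollary \ref{cor2.11}). If instead you restrict the identity map to ${\cal S}(U)$, the source is no longer compact and the compact-Hausdorff argument collapses. A repairable version of your plan would argue with sequences (both topologies being metrizable): if $u_n\to u$ in ${\cal T}$ then every subsequence ${\cal T}$-converges to $u$, hence has regularized liminf equal to $u$, and the BBC criterion then gives natural convergence; conversely, if $u_n\to u$ naturally, ${\cal T}$-compactness plus Theorem \ref{thm2.9} identifies every ${\cal T}$-cluster value of $(u_n)$ with $u$. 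But note that this repaired route still rests on quoting the BBC characterization --- that is, on \cite[Theorem 4.5.8]{BBC} --- which is how the paper itself proves the corollary.
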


\begin{proof}
This follows immediately from Theorem \ref{thm2.9}
and \cite[Theorem 4.5.8]{BBC}.
\end{proof}

\begin{cor}\label{cor2.11}
 The cone ${\cal S}(U)$ endowed with the natural topology has a compact base.
\end{cor}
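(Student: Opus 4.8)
The plan is to obtain the compact base as a hyperplane section of $\mathcal S(U)$, exploiting that $\mathcal S(U)\cup\{+\infty\}$ is already compact. By Corollary \ref{cor2.10} the natural topology on $\mathcal S(U)$ is the topology of convergence in graph, i.e.\ the restriction to $\mathcal S(U)$ of the metrizable topology $\mathcal T$ on $\mathcal S(U)\cup\{+\infty\}$, and by Theorem \ref{thm2.9} the space $(\mathcal S(U)\cup\{+\infty\},\mathcal T)$ is compact. Since this space is metric, $\{+\infty\}$ is closed, so $\mathcal S(U)$ is an open subset of a compact Hausdorff space and is therefore locally compact in the natural topology. Being a standard $H$-cone, $\mathcal S(U)$ is realized by the natural topology (see \cite[Section 4.3]{BBC}) as a salient, closed, convex cone in a Hausdorff locally convex space $E$, and the assertion then is an instance of the general fact that such a cone which is moreover locally compact has a compact base; I would either quote this from \cite{BBC} or argue as in the next paragraph.

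Using local compactness together with local convexity, fix a compact convex neighbourhood $C$ of $0$ in $\mathcal S(U)$ and put $B_0=C\setminus\mathrm{int}\,C$ (interior relative to $\mathcal S(U)$). For $u\in\mathcal S(U)\setminus\{0\}$ the ray $\{tu:t\ge0\}$ is closed and unbounded, hence not contained in the compact set $C$; with the convexity of $C$ and $0\in\mathrm{int}\,C$ this forces the ray to meet $B_0$ in exactly one point, so $\mathcal S(U)\setminus\{0\}$ is the disjoint union of the open rays issuing from $0$ through the points of the compact set $B_0$, and $0\notin B_0$. Next produce a continuous linear functional $\ell$ on $E$ with $\ell>0$ on $B_0$; by homogeneity $\ell>0$ on all of $\mathcal S(U)\setminus\{0\}$. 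Put $B=\{u\in\mathcal S(U):\ell(u)=1\}$. Then $B$ is convex; every $u\in\mathcal S(U)\setminus\{0\}$ is uniquely $\ell(u)\cdot\bigl(\ell(u)^{-1}u\bigr)$ with $\ell(u)^{-1}u\in B$, so $B$ is a base; and with $\delta:=\min_{B_0}\ell>0$ one writes $b\in B$ as $t_bc_b$ with $c_b\in B_0$, $t_b=\ell(c_b)^{-1}\le\delta^{-1}$, and uses convexity of $C$ with $0\in C$ to get $B\subseteq\delta^{-1}C$, a compact set; since $B$ is also closed ($\ell$ continuous, $\mathcal S(U)$ closed), $B$ is compact.

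The step that needs more than soft topology is the existence of the strictly positive continuous functional $\ell$ — equivalently, that $0\notin\overline{\mathrm{conv}}\,B_0$, i.e.\ that the locally compact salient cone $\mathcal S(U)$ is well-based and not merely well-capped. Here one invokes the structure theory of standard $H$-cones in \cite{BBC}, which provides a strictly positive element of the dual of $\mathcal S(U)$ for the natural topology. One could instead try to take $\ell(u)=\int u\,d\mu_0$ with $\mu_0=\sum_j2^{-j}\mu_j$, $\mu_j$ a fine harmonic measure $\varepsilon_{x_j}^{\complement V_j}$ carried near the compact sets $H_j$ of Corollary \ref{cor2.8}: then $\ell$ is strictly positive on $\mathcal S(U)\setminus\{0\}$ by the fine minimum principle, and $B$ is compact because $u_\alpha\to+\infty$ forces $\inf_{H_j}u_\alpha\to+\infty$; but verifying that $\int u\,d\mu_0$ is finite on all of $\mathcal S(U)$ and, above all, continuous for convergence in graph — integral functionals are not continuous under arbitrary epi-convergence — would again require the identity $\lim_{\mathcal F}=\lim\widehat{\inf}_{\mathcal F}$ of Theorem \ref{thm2.9} and the compatibility of the natural topology with the cone operations, so the $H$-cone route is the efficient one and is the expected main obstacle.
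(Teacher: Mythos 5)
Your proposal is correct and takes essentially the same route as the paper: local compactness of $\mathcal S(U)$ in the natural topology is deduced from Theorem \ref{thm2.9} and Corollary \ref{cor2.10}, and the compact base is then obtained from the general theorem on locally compact salient convex cones in locally convex spaces --- the paper simply cites Klee's theorem \cite[Theorem II.2.6]{Al} for exactly the ``general fact'' you invoke. Your attempted self-contained proof of that fact stalls, as you yourself note, at producing the strictly positive continuous functional (equivalently $0\notin\overline{\mathrm{conv}}\,B_0$), but since quoting the known theorem is your stated fallback, this does not affect the validity of the argument.
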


\begin{proof} From Theorem \ref{thm2.9} and Corollary \ref{cor2.10} it follows
that the natural topology
on ${\cal S}(U)$ is locally compact, and we infer by a theorem of Klee
\cite[Theorem II.2.6]{Al} that indeed ${\cal S}(U)$ has a compact base.
\end{proof}

\begin{cor}\label{cor2.12} For given $x\in U$ the affine forms $u\mapsto u(x)$
and $u\mapsto {\widehat R}{}_u^A(x)$ ($A\subset U$) are l.s.c.\ in the natural
topology on ${\cal S}(U)$.
\end{cor}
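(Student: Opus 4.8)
The plan is to deduce both assertions from the description of the natural topology as the topology of convergence in graph on the compact metrizable cone $\cal S(U)\cup\{+\infty\}$ (Theorem \ref{thm2.9} and Corollary \ref{cor2.10}), together with the characterization $\lim_{\cal F}=\lim\widehat{\inf}_{\cal F}$ of limits along convergent filters. Since $\cal S(U)$ is metrizable in the natural topology, it suffices to check sequential lower semicontinuity. So first I would take a sequence $(u_k)$ in $\cal S(U)$ converging in the natural topology to some $u\in\cal S(U)$, and aim to show $u(x)\le\liminf_k u_k(x)$, respectively ${\widehat R}{}_u^A(x)\le\liminf_k{\widehat R}{}_{u_k}^A(x)$, for a fixed $x\in U$.

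For the form $u\mapsto u(x)$: by Corollary \ref{cor2.8} the point $x$ lies (outside the exceptional polar set, and by the fine-continuity argument used in the proof of Theorem \ref{thm2.9}, in fact for every $x$) in some $H_j$ on which every function of $\cal S(U)$ is Euclidean l.s.c., and on which $u_{k|H_j}\to u_{|H_j}$ in the sense of convergence in graph in ${\cal C}_l(H_j)$. I would pass to the ultrafilter (or a suitable subsequence) realizing $\liminf_k u_k(x)$; along it the graphs converge, and by the identification in Theorem \ref{thm2.9} the limit in ${\cal C}_l(H_j)$ is exactly $\lim\widehat{\inf}$ of the subnet, which at the point $x$ is $\le\liminf_k u_k(x)$ while being $\ge u(x)$ since the whole sequence converges to $u$. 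Chasing these inequalities gives $u(x)\le\liminf_k u_k(x)$. Concretely: for any subsequence $(u_{k_i})$ with $u_{k_i}(x)\to\ell:=\liminf u_k(x)$, the fact that convergence in graph forces $\lim\widehat\inf\le\ell$ at $x$, combined with $\lim\widehat\inf=u$, yields $u(x)\le\ell$.

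For the form $u\mapsto{\widehat R}{}_u^A(x)$: here I would invoke Lemma \ref{lemma2.2}, which gives the integral representation ${\widehat R}{}_{v}^A(x)=\int_U v\,d\eps_x^{A\cup(\Omega\setminus U)}$ for $v\in\cal S(U)$, reducing the claim to lower semicontinuity of $v\mapsto\int_U v\,d\nu$ for the fixed measure $\nu=\eps_x^{A\cup(\Omega\setminus U)}$ on $U$. Since $v\ge 0$ is finely continuous, hence Borel in the relative Euclidean topology (Lemma \ref{lemma2.0}), and since $\nu$ does not charge polar sets while being carried by $U$, one can write $\nu=\sum_j\nu_j$ with $\nu_j$ carried by $H_j$; on each $H_j$ convergence in graph of $u_{k|H_j}$ to $u_{|H_j}$ implies, by the standard lower-semicontinuity property of convergence in graph against a fixed positive measure, that $\int u\,d\nu_j\le\liminf_k\int u_k\,d\nu_j$. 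Summing over $j$ and applying Fatou's lemma gives $\int_U u\,d\nu\le\liminf_k\int_U u_k\,d\nu$, i.e.\ ${\widehat R}{}_u^A(x)\le\liminf_k{\widehat R}{}_{u_k}^A(x)$. Note that since $R_u^A={\widehat R}{}_u^A$ q.e.\ and both agree at $x\in b(A\cup(\Omega\setminus U))$, the same bound covers $R_u^A(x)$ at the relevant points.

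The main obstacle I anticipate is the careful handling of the point $x$ relative to the exceptional polar set $P$ from Corollary \ref{cor2.8}: a priori $x$ may fail to lie in any $H_j$. For the first form this is circumvented exactly as in the proof of Theorem \ref{thm2.9}, using fine continuity of $u$ together with the fact that $x$ is a fine limit point of $H_j\setminus P$ for suitable $j$ (each $H_j$ being non-thin at each of its points). For the second form this difficulty does not arise, since polar sets are $\nu$-null and the integral representation of Lemma \ref{lemma2.2} already absorbs the exceptional set. A secondary point to verify is that the ``convergence in graph implies lsc of integrals against positive measures'' statement is available in the form needed; this is part of Mokobodzki's theory of the topology of convergence in graph cited in the paper, applied compact set by compact set.
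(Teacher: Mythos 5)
Your reduction of the second form to Lemma \ref{lemma2.2} plus lower semicontinuity of evaluation under graph convergence on each $H_j$ is workable at points $x$ where the swept measure $\eps_x^{A\cup(\Omega\setminus U)}$ does not charge polar sets, but your treatment of the evaluation form $u\mapsto u(x)$ --- and hence also of the degenerate case $x\in b(A\cup(\Omega\setminus U))$, where ${\widehat R}{}_u^A(x)=u(x)$ and the reduite form collapses to evaluation --- has a genuine gap at points of the exceptional polar set $P$ of Corollary \ref{cor2.8}. Such a point $x$ need not lie in any $H_j$, and your proposed remedy, that ``$x$ is a fine limit point of $H_j\setminus P$ for suitable $j$'', is unjustified: the fact that each $H_j$ is non-thin at each of \emph{its own} points says nothing about a point $x\notin\bigcup_jH_j$, and although $\bigcup_jH_j$ is non-thin at $x$ (its complement in $U$ is polar), thinness is not countably subadditive, so no single $H_j$ need be non-thin at $x$ (already a union of compact annuli shrinking to $x$, each trivially thin at $x$ since $x$ is outside its closure, is non-thin at $x$). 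The corresponding step in the proof of Theorem \ref{thm2.9} is applied only at points belonging to $H_j$ itself, so it does not transfer to $x\in P$. Knowing the restrictions $u_{k|H_j}\to u_{|H_j}$ in graph controls $u_k$ and $u$ only quasi-everywhere, and the passage to the value at $x\in P$ by fine continuity is not uniform in $k$, so no inequality at $x$ follows this way.

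The gap is closed not by refining the graph-convergence argument but by actually using the identity $\lim_{\cal F}=\lim\widehat{\inf}_{\cal F}$ of Theorem \ref{thm2.9}, which you cite but do not exploit here: natural convergence $u_k\to u$ means $u=\sup_k\widehat{\inf}_{j\ge k}u_j$ everywhere on $U$, and since $\widehat{\inf}_{j\ge k}u_j\le\inf_{j\ge k}u_j$ pointwise, this gives $u(x)\le\liminf_ku_k(x)$ for every $x\in U$ with no case distinction. That is essentially the paper's proof: it establishes lower semicontinuity of $u\mapsto{\widehat R}{}_u^A(x)$ directly from the inequality $\widehat{\inf}_{j\ge k}{\widehat R}{}_{u_j}^A(x)\ge{\widehat R}{}_{\widehat{\inf}_{j\ge k}u_j}^A(x)$ by letting $k\to\infty$ (sweeping commutes with increasing limits), and then obtains the evaluation form as the special case $A=U$; no compact exhaustion, graph convergence, or Fubini--Fatou argument is needed.
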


\begin{proof}
Clearly, the map $u\mapsto \widehat R_u^A(x)$ is affine for fixed $x\in U$.
Let $(u_j)$ be a sequence in  ${\cal S}(U)$ converging naturally
to $u\in {\cal S}(U)$. For any index $k$ we have
$$\underset{j\ge k}{\widehat{\inf}}\,\widehat{R}{}_{u_j}^A(x)
\ge\widehat{R}{}_{{\widehat{\inf}}_{j\ge k}u_j}^A(x).$$
Either member of this inequality increases with $k$, and we get for
$k\to\infty$
$$\underset{k}{\lim\inf}\widehat{R}{}_{u_k}^A(x)
\ge\underset{k}{\lim\widehat{\inf}}\,\widehat{R}{}_{u_k}^A(x)
\ge\widehat{R}{}_{\lim\widehat{\inf}_k\,u_k}^A(x)=\widehat{R}{}_u^A(x),$$
and so the map
$u\mapsto\widehat{R}{}_u^A(x)$ is indeed l.s.c.\ on $\cal S(U)$.  For the map
$u\mapsto u(x)$ take $A=U$.
\end{proof}

In the rest of the present section we denote by $B$ a fixed compact base of
${\cal S}(U)$. As shown by Choquet (cf.\ \cite[Corollary I.4.4]{Al}) the set
 $\Ext(B)$ of extreme elements of
$B$ is a $G_\delta$ subset of $B$. On the other hand it follows by the fine Riesz decomposition theorem that every element of
$\Ext(B)$ is either a fine potential or an invariant function.
We denote by $\Ext_p(B)$ (resp.\ $\Ext_i(B)$) the cone of all
extreme fine potentials (resp.\ all extreme invariant functions) in $B$.
According to the  theorem  on integral representation of fine potentials \cite{F5}, any element of $\Ext_p(B)$ has the form $\alpha G_U(.,y)$,
where $\alpha$ is  a constant $>0$ and $y\in  U$.

\begin{prop}\cite[Proposition 4.3]{El1}\label{prop2.8}. $\Ext_p(B)$ and $\Ext_i(B)$ are Borel subsets of \,$B$.
\end{prop}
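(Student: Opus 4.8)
The plan is to exhibit $\Ext_p(B)$ and $\Ext_i(B)$ as the traces on the $G_\delta$ set $\Ext(B)$ of Borel subsets of $B$, using the integral-representation description of extreme points together with the lower semicontinuity results of Corollary \ref{cor2.12}. Since $\Ext(B)$ is itself a $G_\delta$ in $B$ (hence Borel), it suffices to show that each of $\Ext_p(B)$ and $\Ext_i(B)$ is Borel in $B$; and because $\Ext(B)=\Ext_p(B)\cup\Ext_i(B)$ disjointly (by the fine Riesz decomposition theorem quoted just before the statement), it is enough to prove that one of them, say $\Ext_p(B)$, is Borel. First I would fix a countable dense subset $(x_k)$ of $U$ in the fine topology — such a set exists since $\cal S(U)$ is a standard $H$-cone on $U$ (equivalently, use a countable family detecting fine potentials, e.g. points running through the fine interiors $K'_j$ of Theorem \ref{thm2.7} together with a countable dense subset of each).

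Next I would characterize membership in $\Ext_p(B)$ by countably many Borel conditions. By the cited integral representation of fine potentials, $u\in\Ext_p(B)$ if and only if $u=\alpha G_U(\cdot,y)$ for some $\alpha>0$ and $y\in U$; equivalently, $u$ is a fine potential (i.e. $\widehat R{}_u^{U\setminus V}\to 0$ suitably, or $u\wedge n\,p\uparrow u$ with fine potentials) and $u$ is extreme. Rather than separating "potential" from "extreme", I would use that on the compact base $B$ a function $u$ is an extreme \emph{potential} precisely when it is extreme \emph{and} not invariant, and translate "not invariant" into a countable disjunction of the form $\exists j,k:\ \widehat R{}_u^{U\setminus V_{jk}}(x_m)<u(x_m)$ for a fixed countable family of regular finely open sets $(V_{jk})$ and a fixed countable fine-dense set $(x_m)$: by Theorem \ref{thm2.5}(b) and \cite[Theorem 4.4]{F4}, $u$ is invariant iff for every such pair the equality $\widehat R{}_u^{U\setminus V_{jk}}=u$ holds on $V_{jk}$, and by fine continuity this can be tested on the countable dense set. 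Each set $\{u\in B:\widehat R{}_u^{U\setminus V_{jk}}(x_m)<u(x_m)\}$ is the set where an l.s.c. affine functional ($u\mapsto u(x_m)$, by Corollary \ref{cor2.12}) strictly exceeds another l.s.c. affine functional ($u\mapsto\widehat R{}_u^{U\setminus V_{jk}}(x_m)$, also by Corollary \ref{cor2.12}); the difference of an l.s.c. and an l.s.c. function need not be Borel in general, but here I would instead write the condition as $u(x_m)>\widehat R{}_u^{U\setminus V_{jk}}(x_m)$ and exploit that \emph{both} functionals are l.s.c. \emph{and} affine on the compact convex $B$, hence of Baire class one (a bounded l.s.c. function on a compact metric space is an increasing limit of continuous functions, so Borel); consequently the strict-inequality set is Borel. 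Taking the countable union over $(j,k,m)$ gives that $\{u\in B:u\text{ not invariant}\}$ is Borel, so $\Ext_p(B)=\Ext(B)\cap\{u\text{ not invariant}\}$ is Borel, and then $\Ext_i(B)=\Ext(B)\setminus\Ext_p(B)$ is Borel as well.

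The main obstacle I anticipate is the measurability bookkeeping around reduced functions: one must be sure that $u\mapsto\widehat R{}_u^{U\setminus V_{jk}}(x_m)$ is genuinely Borel on $B$ (l.s.c. suffices, and Corollary \ref{cor2.12} delivers exactly that) and that the countable family $(V_{jk})$ of regular finely open sets can be chosen \emph{independently of $u$} so that invariance of $u$ is detected by that fixed family — this is where \cite[Theorem 4.4]{F4} must be invoked carefully, perhaps via Theorem \ref{thm2.5}(b) applied to a countable regular finely open cover of $U$ whose existence follows from Corollary \ref{cor2.8} (the sets $H_j$ there being non-thin at each point, their fine interiors furnish such a cover up to a polar set, and polar sets are harmless since two finely hyperharmonic functions agreeing q.e. coincide). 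A secondary point to check is that "extreme and not a potential" really does coincide with "extreme invariant", which is immediate from the fine Riesz decomposition theorem as recalled in the excerpt. Once these are in place the argument is a routine descriptive-set-theoretic assembly.
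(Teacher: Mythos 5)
Your overall skeleton (show one of the two sets is Borel, use that $\Ext(B)$ is a $G_\delta$ and that $\Ext(B)=\Ext_p(B)\cup\Ext_i(B)$ disjointly) is fine, but the reduction of ``$u$ is not invariant'' to countably many Borel conditions has two genuine gaps. First, there is no countable subset of $U$ that is dense in the fine topology: a countable set in $\RR^n$, $n\ge2$, is polar, hence has empty base and is finely closed, so it can never be finely dense in a nonempty finely open set. Thus the step ``by fine continuity the identity $\widehat R{}_u^{U\setminus V_{jk}}=u$ on $V_{jk}$ can be tested on the countable dense set $(x_m)$'' collapses; a Euclidean-dense countable set does not help either, since finely continuous functions are not Euclidean continuous, and without it the set $\{u:\widehat R{}_u^{U\setminus V_{jk}}=u\text{ on }V_{jk}\}$ is an uncountable intersection of Borel sets, not obviously Borel. (The Baire-class-one detour, by contrast, is unnecessary: l.s.c.\ functions are Borel, so strict-inequality sets between them are automatically Borel.) Second, the equivalence ``$u$ invariant $\iff\widehat R{}_u^{U\setminus V_{jk}}=u$ on $V_{jk}$ for all members of a \emph{fixed} countable family'' is not available from the results you invoke. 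Lemma \ref{lemma2.3} and Theorem \ref{thm2.5}(b) give only the implication from the reduite identities to invariance; the converse direction for a prescribed cover is exactly the hard part: \cite[Theorem 4.4]{F4} produces a cover \emph{depending on} $u$, Theorem \ref{thm2.5}(a) gives only that $u_{|V_{jk}}$ is invariant (which does not by itself yield $\widehat R{}_u^{U\setminus V_{jk}}=u$ on $V_{jk}$; think of $V=U$), and the statement you would actually need is Theorem \ref{thm3.16}, which requires $\overline V\subset U$ in the natural topology and is proved later by means of the Martin machinery that itself rests on the measurability assertion of Proposition \ref{prop2.8} (via Theorem \ref{thm2.17}), so invoking it here would be circular within this paper. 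You flag this issue yourself but do not resolve it.

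For comparison: the paper gives no proof at all, quoting \cite[Proposition 4.3]{El1}, and the efficient argument consistent with the paper's toolkit goes through the potential side rather than the invariance side. Every extreme fine potential in $B$ is of the form $P_y=G_U(.,y)/\Phi(G_U(.,y))$ with $y\in U$, i.e.\ $\Ext_p(B)=\varphi(U)$; writing $U=\bigcup_{k,l}A_{kl}$ with $A_{kl}=\{y\in U:\Phi(G_U(.,y))\ge1/k\}\cap\overline B_l$, the sets $\varphi(A_{kl})$ are compact in $B$ (this is the content of Proposition \ref{prop3.6} and Corollaries \ref{cor3.8}, \ref{cor3.9}), so $\Ext_p(B)$ is a $K_\sigma$, hence Borel, and then $\Ext_i(B)=\Ext(B)\setminus\Ext_p(B)$ is Borel because $\Ext(B)$ is a $G_\delta$. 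If you want to keep your strategy, you would have to replace the countable point set by a countable family of measures charging every nonempty finely open set (e.g.\ Lebesgue measure on the $V_{jk}$, using that nonempty finely open sets have positive Lebesgue measure) and prove the corresponding joint measurability, and you would still need a legitimate fixed-cover criterion for invariance; as written, the argument does not close.
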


When $\mu$ is a non-zero Radon measure on $B$ there exists a unique element $s$ of $B$ such that
$$l(s)=\int_Bl(u)d\mu(u)$$
for every continuous affine form $l:B\longrightarrow[0,+\infty[\,$ on $B$
(in other words, $s$ is the barycenter of the probability measure
$\frac{1}{\mu(B)}\mu$). For any l.s.c.\ affine form
$\varphi:B\longrightarrow[0,+\infty]$ there exists by
\cite[Corollary I.1.4]{Al} an increasing sequence of continuous affine forms
on $B$  which converges to $\varphi$, and hence
$$\varphi(s)=\int_B\varphi(u)d\mu(u).$$
In particular, for fixed $x\in U$ and $A\subset U$, the affine forms
$u\mapsto u(x)$ and $u\mapsto {\widehat R}{}_u^A(x)$ are l.s.c.\ according to
Corollary \ref{cor2.12}, and hence
$$s(x)=\int_B u(x)d\mu(u)\text{ \;and \;}
{\widehat R}{}_s^A(x)=\int_B{\widehat R}{}_u^A(x)d\mu(u).$$

The following theorem is a particular case of Choquet's theorem.

\begin{theorem}\label{thm2.14}\cite[Th\'eor\`eme 4.1]{El1}
For any $s\in {\cal S}(U)$ there exists a unique Radon measure
on $B$ carried by $\Ext(B)$ such that
$$s(x)=\int_Bu(x)d\mu(u), \quad x\in U.$$
\end{theorem}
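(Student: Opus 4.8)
The plan is to deduce Theorem \ref{thm2.14} from the general Choquet representation theorem applied to the compact base $B$ of the $H$-cone $\mathcal S(U)$. Since $B$ is a metrizable compact convex set (the natural topology on $\mathcal S(U)$ is metrizable, as recorded after Corollary \ref{cor2.8}), Choquet's theorem (see \cite[Theorem I.4.8 and Corollary I.4.9]{Al}) guarantees that every point of $B$ is the barycenter of a (probability) Radon measure carried by the $G_\delta$ set $\Ext(B)$, and that this representing measure is unique precisely because $B$ is a simplex. So the two things to check are: (1) that $B$ is a Choquet simplex, and (2) that ``being the barycenter'' translates into the pointwise integral formula $s(x)=\int_B u(x)\,d\mu(u)$ for all $x\in U$, and that uniqueness of the barycentric measure gives uniqueness of the representation in the stated form.

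For step (1): a compact base of an $H$-cone is automatically a simplex, since by definition an $H$-cone (a fortiori a standard $H$-cone of functions, which $\mathcal S(U)$ is by Theorem \ref{thm2.6} together with \cite[Theorem 4.4.6]{BBC}) is a lattice cone in its own order, and a cone with a compact base is a lattice in its own order if and only if that base is a Choquet simplex; see \cite[Chapter~II]{Al} or the $H$-cone literature \cite{BBC}. Alternatively one may simply invoke \cite[Th\'eor\`eme 4.1]{El1} directly, since the theorem is quoted from there; but it is cleaner to indicate the simplex property explicitly. For step (2): the key point is that the evaluation functionals $u\mapsto u(x)$ ($x\in U$) are affine and lower semicontinuous on $B$ by Corollary \ref{cor2.12}, and — as spelled out in the paragraph preceding the theorem — every l.s.c.\ affine form on $B$ is an increasing limit of continuous affine forms by \cite[Corollary I.1.4]{Al}, so the defining barycentric identity for continuous affine forms extends by monotone convergence to $u\mapsto u(x)$. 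Hence if $s$ is the barycenter of $\mu$ then $s(x)=\int_B u(x)\,d\mu(u)$ for every $x\in U$. Conversely, if $\mu$ is any Radon measure on $B$ carried by $\Ext(B)$ with $s(x)=\int_B u(x)\,d\mu(u)$ for all $x$, then testing against continuous affine forms $l$ (which, by the Riesz-type structure of $B$, are determined by their values along the evaluations, or more simply: $l$ restricted to $\mathcal S(U)$ is itself an increasing limit of finite positive combinations of evaluations, up to the natural topology) shows $l(s)=\int_B l\,d\mu$, i.e.\ $s$ is the barycenter of $\mu$; uniqueness of the barycentric measure on a simplex then forces $\mu$ to be the Choquet measure.

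I would organize the write-up as: first note $\mathcal S(U)$ is a standard $H$-cone with compact base $B$ (already established), hence $B$ is a metrizable simplex; then quote Choquet's existence-and-uniqueness theorem to get a unique $\mu$ on $B$ carried by $\Ext(B)$ with $s=\int_B u\,d\mu(u)$ in the barycentric sense; then convert the barycentric identity to the pointwise one using Corollary \ref{cor2.12} and the approximation of l.s.c.\ affine forms by continuous ones, exactly as in the paragraph already present before the theorem statement; finally argue that any measure on $B$ carried by $\Ext(B)$ realizing the pointwise identity is barycentric for $s$, so uniqueness transfers.

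The main obstacle — and the only genuinely non-formal point — is the conversion between the pointwise identity $s(x)=\int_B u(x)\,d\mu(u)$ and the barycentric characterization, together with making sure uniqueness is stated for the pointwise formulation and not merely the barycentric one. Showing every continuous affine $l:B\to[0,\infty[$ is recovered from the evaluations $u\mapsto u(x)$ (so that agreement on all evaluations forces $l(s)=\int_B l\,d\mu$) is where one must be slightly careful: this uses that the map $u\mapsto(u(x))_{x\in U}$ embeds $\mathcal S(U)$ as an order-dense subcone of functions on $U$ in a way compatible with the natural topology, which is exactly the content of $\mathcal S(U)$ being a \emph{standard $H$-cone of functions} (Theorem \ref{thm2.6} via \cite[Theorem 4.4.6]{BBC}). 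Once that identification is in hand the rest is routine and the proof is short.
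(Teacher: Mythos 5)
Your proposal is correct and follows essentially the same route as the paper, which presents Theorem \ref{thm2.14} precisely as a particular case of Choquet's theorem (via \cite[Th\'eor\`eme 4.1]{El1}) applied to the compact metrizable base $B$ of the lattice cone $\cal S(U)$, with the barycentric-to-pointwise conversion handled exactly by the l.s.c.\ affine evaluation forms of Corollary \ref{cor2.12} and \cite[Corollary I.1.4]{Al}, as in the paragraph preceding the theorem. Your only genuinely non-formal worry (recovering continuous affine forms from evaluations) is in fact unnecessary: given a measure $\mu$ carried by $\Ext(B)$ satisfying the pointwise identity, its barycenter $t$ satisfies $t(x)=\int_B u(x)\,d\mu(u)=s(x)$ for all $x\in U$ by the already established direction, so $t=s$ as elements of $\cal S(U)$ and uniqueness follows from the simplex property.
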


The next two theorems are immediate consequences of
\cite [Th\'eor\`eme 4.5]{El1}.

\begin{theorem}\label{thm2.15}
For any fine potential $p$ on $U$ there exists a unique Radon measure
$\mu$ on $B$ carried by $\Ext_p(B)$ such that
$$p(x)=\int_Bq(x)d\mu(q), \quad x\in U.$$
\end{theorem}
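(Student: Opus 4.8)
The plan is to derive the statement from the Choquet representation already available in Theorem \ref{thm2.14}: it suffices to show that when $p$ is a fine potential, the unique representing measure carried by $\Ext(B)$ is automatically carried by $\Ext_p(B)$, and then to read off both existence and uniqueness from Theorem \ref{thm2.14}.

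First I would apply Theorem \ref{thm2.14} to $p\in\cal S(U)$, obtaining a unique Radon measure $\mu$ on $B$ carried by $\Ext(B)$ with $p(x)=\int_B u(x)\,d\mu(u)$ for every $x\in U$. By the fine Riesz decomposition theorem every element of $\Ext(B)$ is either a fine potential or an invariant function, and these alternatives are mutually exclusive since $0\notin B$; thus $\Ext(B)=\Ext_p(B)\cup\Ext_i(B)$ is a partition into the two Borel sets of Proposition \ref{prop2.8}. I would then split $\mu=\mu_p+\mu_i$, where $\mu_p$ and $\mu_i$ are the restrictions of $\mu$ to $\Ext_p(B)$ and $\Ext_i(B)$, and put $p_1(x)=\int_B u(x)\,d\mu_p(u)$, $h(x)=\int_B u(x)\,d\mu_i(u)$; using that $u\mapsto u(x)$ is a l.s.c.\ affine form (Corollary \ref{cor2.12}) and the barycenter formula $s(x)=\int_B u(x)\,d\mu(u)$, both $p_1$ and $h$ lie in $\cal S(U)$ and $p=p_1+h$ on $U$. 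Since every element of $\Ext_p(B)$ has the form $\alpha G_U(\cdot,y)$ with $\alpha>0$, $y\in U$, a standard measurability argument gives $p_1=G_U\nu$ for a suitable Borel measure $\nu$ on $U$; as $p_1\le p<+\infty$ q.e., $p_1\in\cal P(U)$. Likewise $h$, being a barycenter of a measure carried by the invariant functions, belongs to the band $\cal H_i(U)$. Hence $p=p_1+h$ is a fine Riesz decomposition of $p$ with $p_1\in\cal P(U)$, $h\in\cal H_i(U)$; by uniqueness of that decomposition and $p\in\cal P(U)$ we get $h=0$. A nonzero Radon measure on the base $B$ has a nonzero barycenter, so $\mu_i=0$, i.e.\ $\mu=\mu_p$ is carried by $\Ext_p(B)$. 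Uniqueness in the assertion then follows from uniqueness in Theorem \ref{thm2.14}: any Radon measure on $B$ carried by $\Ext_p(B)\subset\Ext(B)$ that represents $p$ must equal $\mu$.

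The step I expect to be most delicate is the identification of the two barycenters, i.e.\ that $\cal P(U)$ and $\cal H_i(U)$ are each stable under the barycenter (integration) operation on $B$. For $\cal P(U)$ this rests on the explicit description of $\Ext_p(B)$ together with the integral representation theorem for fine potentials of \cite{F5}, which gives $G_U\nu\in\cal P(U)$ once $G_U\nu$ is finite q.e.; for $\cal H_i(U)$ it is an instance of the general fact, from the theory of standard $H$-cones \cite{BBC}, that a band is closed under barycenters of Radon measures carried by it. Granting these two points, the rest is bookkeeping with the uniqueness in Choquet's theorem and in the fine Riesz decomposition. This is in essence the content of \cite[Th\'eor\`eme 4.5]{El1}.
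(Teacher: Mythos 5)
Your overall route---take the unique Choquet measure of $p$ from Theorem \ref{thm2.14} and show it cannot charge $\Ext_i(B)$---is reasonable, and there is nothing in the paper to compare it with in detail: the paper gives no proof, simply citing \cite[Th\'eor\`eme 4.5]{El1}. The potential half of your argument is essentially fine: $\mu_p$ can be transferred to a measure on $U$ because $y\mapsto P_y$ is a Borel isomorphism of $U$ onto $\Ext_p(B)$ (this is the content of Corollaries \ref{cor3.8} and \ref{cor3.9}, proved later but independently of Theorem \ref{thm2.15}), and after dividing by the finely continuous density $\Phi(G_U(\cdot,y))$ one gets $p_1=G_U\nu$, a fine potential since it is finite q.e.

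The genuine gap is exactly at the step you flag for $\cal H_i(U)$. There is no quotable general principle in \cite{BBC} of the form ``a band is closed under barycenters of Radon measures carried by it,'' and no soft argument is available here: $\cal H_i(U)\cap B$ is a face of $B$ but it is \emph{not} closed in the natural topology (Remark \ref{remark2.14}), so measure-stability of this non-closed convex set is precisely the nontrivial content, not a formality. Within the paper, the fact you invoke (that $\int u\,d\mu_i(u)$ is invariant when $\mu_i$ is carried by $\Ext_i(B)$) is the invariant half of Theorem \ref{thm2.17}, which the paper deduces \emph{from} Theorems \ref{thm2.15} and \ref{thm2.16}; quoting it at this point is therefore circular unless you supply an independent proof. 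Such a proof exists but requires real work: it is the Fubini argument of Corollary \ref{cor3.18}, which rests on the joint measurability of the kernel (Proposition \ref{prop3.1b} and Remark \ref{remark3.1d}) together with the characterization of invariance from \cite[Theorem 4.4]{F4}. Alternatively, you can bypass the problematic step altogether: existence in Theorem \ref{thm2.15} follows directly from the integral representation of fine potentials \cite{F4,F5}, writing $p=G_U\lambda$ and pushing $\lambda$ forward under $y\mapsto P_y$ with density $\Phi(G_U(\cdot,y))$, while uniqueness follows from Theorem \ref{thm2.14} exactly as you argue, since any measure carried by $\Ext_p(B)$ is in particular carried by $\Ext(B)$.
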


\begin{theorem}\cite[Th\'eor\`eme 4.6]{El1}\label{thm2.16}
For any invariant function $h\in {\cal S}(U)$ there exists a unique
Radon measure $\mu$ on $B$ carried by $\Ext_i(B)$ such that
$$h(x)=\int_Bk(x)d\mu(k), \quad x\in U.$$
\end{theorem}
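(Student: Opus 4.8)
The plan is to deduce the statement from the Choquet representation of Theorem \ref{thm2.14} by splitting the representing measure along the Borel partition $\Ext(B)=\Ext_p(B)\cup\Ext_i(B)$ furnished by Proposition \ref{prop2.8} and the fine Riesz decomposition theorem (each element of $\Ext(B)$ being a fine potential or an invariant function), and then showing that the component carried by $\Ext_p(B)$ must vanish.

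First apply Theorem \ref{thm2.14} to $h$: there is a unique Radon measure $\mu$ on $B$, carried by $\Ext(B)$, with $h(x)=\int_Bu(x)\,d\mu(u)$ for $x\in U$, the integrand $u\mapsto u(x)$ being Borel by its lower semicontinuity (Corollary \ref{cor2.12}). Put $\mu_p=\mu|_{\Ext_p(B)}$, $\mu_i=\mu|_{\Ext_i(B)}$, and define $p(x)=\int_Bu(x)\,d\mu_p(u)$, $h'(x)=\int_Bu(x)\,d\mu_i(u)$. Since $0\le\mu_p\le\mu$ and $u(x)\ge0$, both $p$ and $h'$ are $\le h$, so $p,h'\in\cal S(U)$ and $h=p+h'$. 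The crucial point is that $p\in\cal P(U)$: by the integral representation of fine potentials every element of $\Ext_p(B)$ is of the form $\alpha\,G_U(\cdot,y)$ with $\alpha>0$ and $y\in U$, so transporting $\mu_p$ through the parametrization $\alpha\,G_U(\cdot,y)\mapsto y$ (with weight $\alpha$) gives a Borel measure $\lambda$ on $U$ with $p=G_U\lambda$, and $p\le h<+\infty$ q.e.\ on $U$ makes this a genuine fine potential. (Alternatively one invokes directly that $\cal P(U)$ is a band, hence stable under barycentric integration, which is the substance behind \cite[Th\'eor\`eme 4.5]{El1}.)

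Decompose $h'$ by the fine Riesz theorem as $h'=h'_p+h'_i$ with $h'_p\in\cal P(U)$, $h'_i\in\cal H_i(U)$. Then $h=(p+h'_p)+h'_i$ is a decomposition of $h$ into a potential part $p+h'_p\in\cal P(U)$ and an invariant part $h'_i\in\cal H_i(U)$; but $h$ is invariant, so by the uniqueness of the fine Riesz decomposition $p+h'_p=0$, hence $p=0$. Thus $\mu_p$ — which is carried by $\Ext_p(B)\subset\Ext(B)$ — represents the zero function of $\cal S(U)$; since the zero measure likewise represents $0$ and is carried by $\Ext(B)$, the uniqueness clause of Theorem \ref{thm2.14} forces $\mu_p=0$, so $\mu=\mu_i$ is carried by $\Ext_i(B)$, proving existence. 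Uniqueness is then immediate: any Radon measure $\nu$ on $B$ carried by $\Ext_i(B)$ representing $h$ is a fortiori carried by $\Ext(B)$, whence $\nu=\mu$ by Theorem \ref{thm2.14}. I expect the only genuinely delicate step to be the verification that $p\in\cal P(U)$ — the stability of the potential band under the barycentric integral — involving the measurability of the Green-kernel parametrization of $\Ext_p(B)$; once this is in place, everything else is bookkeeping with Theorem \ref{thm2.14}.
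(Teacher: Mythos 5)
Your argument is essentially correct, but note that the paper does not prove this statement in-house: Theorems \ref{thm2.15} and \ref{thm2.16} are simply quoted as immediate consequences of \cite[Th\'eor\`eme 4.5]{El1}. What you have written is in effect a self-contained reconstruction of that cited result from ingredients the paper does supply: Choquet's theorem (Theorem \ref{thm2.14}), the Borel measurability of $\Ext_p(B)$ and $\Ext_i(B)$ (Proposition \ref{prop2.8}), the description of $\Ext_p(B)$ as $\{\alpha G_U(\cdot,y):\alpha>0,\ y\in U\}$, and uniqueness of the fine Riesz decomposition. Your splitting $\mu=\mu_p+\mu_i$, the identification of the $\Ext_p(B)$-component with a Green potential $G_U\lambda$, and the conclusion $\mu_p=0$ via uniqueness are exactly in the spirit of arguments the paper uses elsewhere (compare the proofs of Theorem \ref{thm3.12} and Corollary \ref{cor3.22}), and the measurability of the parametrization $P_y\mapsto y$ that you flag as delicate is legitimately available without circularity (it is made explicit in Corollaries \ref{cor3.8}--\ref{cor3.9} and Lemma \ref{lemma4.2}, which rest only on Section \ref{sec2} material). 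So your route buys a proof internal to this paper, at the cost of redoing what \cite{El1} already contains.

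One step needs tightening: ``$p$ and $h'$ are $\le h$, so $p,h'\in{\cal S}(U)$'' is not by itself a justification --- domination by $h$ gives finiteness q.e., not fine hyperharmonicity, and you must know $h'\in{\cal S}(U)$ \emph{before} applying the Riesz decomposition to it. The fix is already in the paper, in the paragraph preceding Theorem \ref{thm2.14}: up to the factors $\mu_p(B)$ and $\mu_i(B)$, the functions $p$ and $h'$ are the barycenters of the normalized restrictions of $\mu$, hence elements of ${\cal S}(U)$, and since the affine form $u\mapsto u(x)$ is l.s.c.\ (Corollary \ref{cor2.12}) one gets the pointwise formulas $p(x)=\int_B u(x)\,d\mu_p(u)$ and $h'(x)=\int_B u(x)\,d\mu_i(u)$. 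With that observation inserted, the remainder of your argument (that $p=G_U\lambda$ is a fine potential being finite q.e., that uniqueness of the Riesz decomposition of the invariant $h$ forces $p=0$, and that uniqueness in Theorem \ref{thm2.14} then forces $\mu_p=0$, with uniqueness of $\mu$ itself immediate) is sound.
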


The following theorem is an immediate consequence of Theorems \ref{thm2.15} and
\ref{thm2.16}.

\begin{theorem}\label{thm2.17}
Let $A\subset \Ext_p(B)$ (resp.\ $A\subset \Ext_i(B)$), and let
$\mu$ be a Radon measure on $B$. Then $\int_Aud\mu(u)$ is a fine potential
(resp.\ an invariant function).
\end{theorem}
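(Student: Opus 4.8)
The plan is to reduce Theorem~\ref{thm2.17} to the uniqueness of the Choquet representations for fine potentials and for invariant functions established in Theorems~\ref{thm2.15} and~\ref{thm2.16}. Fix a Radon measure $\mu$ on $B$ and a Borel set $A\subset\Ext_p(B)$ (the case $A\subset\Ext_i(B)$ being entirely analogous). Let $\nu=\mu_{|A}$ denote the restriction of $\mu$ to $A$, which is again a Radon measure on $B$ carried by $\Ext_p(B)$, and set $p(x)=\int_Au(x)\,d\mu(u)=\int_Bu(x)\,d\nu(u)$ for $x\in U$. By the l.s.c.\ of the evaluation maps $u\mapsto u(x)$ (Corollary~\ref{cor2.12}) this integral defines a function $p$ on $U$, and $p$ is the value at $x$ of the barycenter $s$ of $\nu$ (up to the normalizing constant $\nu(B)$), so $p=\nu(B)\,s(x)\in\cal S(U)$ by the remarks preceding Theorem~\ref{thm2.14}; more directly, $p$ is an increasing limit of the functions $x\mapsto\int u(x)\,d\nu_k(u)$ for measures $\nu_k$ with compact support in $B$, each of which lies in $\cal S(U)$, so $p\in\cal S(U)\cup\{+\infty\}$, and finiteness q.e.\ follows since $p\le s\cdot\nu(B)$ with $s\in\cal S(U)$.

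It then remains to identify the nature of $p$. Since $\nu$ is carried by $\Ext_p(B)$, Theorem~\ref{thm2.15} applies to the fine potential associated with the barycenter of $\nu$ and shows that the function $x\mapsto\int_B q(x)\,d\nu(q)$ is a fine potential on $U$; but this is exactly $p$. Hence $\int_Au\,d\mu(u)=p\in\cal P(U)$. In the invariant case one argues identically, replacing Theorem~\ref{thm2.15} by Theorem~\ref{thm2.16}: the restriction $\nu=\mu_{|A}$ is carried by $\Ext_i(B)$, and $\int_Bk(x)\,d\nu(k)$ is then an invariant function in $\cal S(U)$, again equal to $\int_Au(x)\,d\mu(u)$.

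The only genuine point requiring care — and the step I expect to be the main obstacle — is the measurability and integrability bookkeeping needed to make sense of $\nu=\mu_{|A}$ and of the integral $\int_A u\,d\mu(u)$ as an element of $\cal S(U)$: one must know that $\Ext_p(B)$ and $\Ext_i(B)$ are Borel (this is Proposition~\ref{prop2.8}), so that restricting $\mu$ to the Borel set $A$ produces a legitimate Radon measure carried by $\Ext_p(B)$ respectively $\Ext_i(B)$, and one must invoke Corollary~\ref{cor2.12} to guarantee that $u\mapsto u(x)$ is $\mu$-measurable so that $\int_A u(x)\,d\mu(u)$ is well defined for every $x\in U$. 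Once that is in place, the result is just Theorems~\ref{thm2.15} and~\ref{thm2.16} applied to $\nu$, with no further work. I would also remark that $p$ being a fine potential (resp.\ invariant) can alternatively be seen from the fact that $\cal P(U)$ (resp.\ $\cal H_i(U)$) is a band in $\cal S(U)$ closed under the kind of integration over $B$ performed here, but routing through the uniqueness theorems is the cleanest path.
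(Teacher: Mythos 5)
Your setup paragraph is fine and matches what the paper has in mind: $A$ is Borel by Proposition \ref{prop2.8}, $\nu:=1_A\mu$ is a Radon measure carried by $\Ext_p(B)$ (resp.\ $\Ext_i(B)$), and $\int_A u(x)\,d\mu(u)=\nu(B)\,s(x)$ where $s\in B$ is the barycenter of $\nu/\nu(B)$, so the integral defines an element of $\cal S(U)$ (the paper itself gives no written proof, declaring the theorem an immediate consequence of Theorems \ref{thm2.15} and \ref{thm2.16}). The genuine gap is in your identification step: you assert that ``Theorem \ref{thm2.15} applies to the fine potential associated with the barycenter of $\nu$ and shows that $x\mapsto\int_B q(x)\,d\nu(q)$ is a fine potential.'' Theorem \ref{thm2.15} goes in the opposite direction: it says that a function \emph{already known} to be a fine potential admits a unique representing measure carried by $\Ext_p(B)$; it does not say that the barycenter of an arbitrary measure carried by $\Ext_p(B)$ is a fine potential. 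That converse is exactly the content of Theorem \ref{thm2.17}, so as written the step begs the question. Your opening sentence announces the right idea (uniqueness), but the uniqueness argument is never actually executed.

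The missing bridge is short but real. Assume $\nu\ne0$, normalize so $\nu(B)=1$, and let $s\in B$ be the barycenter of $\nu$. Riesz-decompose $s=p+h$ with $p\in\cal P(U)$ and $h\in\cal H_i(U)$. By Theorems \ref{thm2.15} and \ref{thm2.16} there are measures $\lambda$ carried by $\Ext_p(B)$ and $\rho$ carried by $\Ext_i(B)$ with $p=\int q\,d\lambda(q)$ and $h=\int k\,d\rho(k)$. Then $\nu$ and $\lambda+\rho$ are both measures carried by $\Ext(B)$ representing $s$, so uniqueness in Choquet's theorem (Theorem \ref{thm2.14}) yields $\nu=\lambda+\rho$. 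Since a nonzero element of $B$ cannot be simultaneously a fine potential and invariant, $\Ext_p(B)\cap\Ext_i(B)=\varnothing$; as $\nu(\Ext_i(B))=0$ this forces $\rho=0$, hence $h=0$ and $s=p\in\cal P(U)$, i.e.\ $\int_A u\,d\mu(u)$ is a fine potential. The invariant case is symmetric, killing instead the potential part. With this argument inserted your proof is complete and coincides with what the paper intends by its one-line justification; without it, the proposal only restates the theorem at the decisive point.
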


\begin{remark}\label{remark2.18} In view of \cite[Section 11.16]{F1} the set
$\cal H_i(U)$ of all invariant functions on $U$ is clearly a lower complete and conditionally upper complete sublattice of $\cal S(U)$ in the specific order.
According to Lemma \ref{lemma2.1}  the specific order on $\cal H_i(U)$ coincides
with the pointwise order.
\end{remark}

\begin{remark}\label{remark2.14} On a Euclidean domain the invariant functions
are the same as the harmonic functions $\ge0$. It is well known in view of
Harnack's principle that the set of these functions is closed in
$\cal H(U)$ with the natural topology (which coincides with the topology of
R.-M.\ Herv\'e \cite{He}). However, when $U$ is just a regular fine domain in a
Green space $\Omega$, the set $\cal H_i(U)$ of invariant functions on $U$
need not be closed in the induced natural topology on $U$, as shown by the
following example: Let $y\in U$ be a Euclidean non-inner point of $U$,
and let $(y_k)$ be a sequence of points of $\Omega\setminus U$ which converges
Euclidean to $y$. The sequence $(G_\Omega(.,y_k)_{|U})$ then converges naturally
in $\cal S(U)$ to $G_\Omega(.,y)_{|U}$, which does not belong to $\cal H_i(U)$
because its fine potential part $G_U(.,y)$ is non-zero.
\end{remark}

\section{Martin compactification of $U$ and integral representation
in $\cal S(U)$}\label{sec3}

We continue considering a regular fine domain $U$ in a Greenian domain
$\Omega\subset\RR^n$, $n\ge2$.
Let $B$ be a compact base of the cone ${\cal S}(U)$ and let
$\Phi:\cal S(U)\longrightarrow[0,+\infty[$ be a continuous affine form such that
$$B=\{u\in {\cal S}(U): \Phi(u)=1\}.$$
Then $\Phi(u)>0$ except at $u=0$.
Consider the mapping $\varphi:U\longrightarrow B$ defined by
$$\varphi(y)=P_y=\frac{G_U(.,y)}{\Phi(G_U(.,y))}.$$
Note that $\phi$ is injective because
$y=\{x\in U:G_U(x,y)=+\infty\}=\{x\in U:\phi(y)(x)=+\infty\}$. We may therefore
identify $y\in U$ with $\phi(y)=P_y\in B$ and hence $U$ with $\varphi(U).$

We denote by ${\overline U}$ the closure of $U$ in $B$ (with the natural
topology), and write $\Delta(U)={\overline U}\setminus U$.
Then ${\overline U}$ is compact in $B$ and is called the Martin
compactification of $U$, and $\Delta(U)$ is called the
Martin boundary of $U$.

If $B$ and $B'$ are two bases of ${\cal S}(U)$ the Martin compactifications of
$U$ relative to $B$ and to $B'$ are clearly homeomorphic.

Throughout the rest of this article we fix the compact base
$B$ of the cone ${\cal S}(U)$ and the above continuous affine form
$\Phi:{\cal S}(U)\longrightarrow\,]0,+\infty[\,$ defining this base.

For any $Y\in \overline U$ consider the function
$K(.,Y)\in B\subset{\cal S}(U)\setminus\{0\}$ defined on $U$ by
$K(x,Y)=\phi(Y)(x)$ if $Y\in U$ and $K(.,Y)=Y$ if $Y\in\Delta(U).$ Clearly
the map $Y\longmapsto K(.,Y)$ is a bijection of $\overline U$ on $B$.

\begin{definition}\label{def3.1} The function
$K:U\times\overline{U}\longrightarrow\,]0,+\infty]$ defined by
$K(x,Y)=K(.,Y)(x)$
is called the (fine) Riesz-Martin kernel for $U$, and its restriction
to $U\times\Delta(U)$ is called the (fine) Martin kernel for $U$.
\end{definition}

\begin{prop}\label{prop3.1b} The Riesz-Martin kernel
$K:U\times\overline U\longrightarrow\,]0,+\infty]$
has the following  properties, $\overline U$ being given the natural topology:

{\rm{(i)}} For any $x\in U$, $K(x,.)$ is l.s.c.\ on $\overline U$.

{\rm{(ii)}} For any $Y\in \overline U$, $K(.,Y)\in{\cal S}(U)$ is finely
continuous on $U$.

{\rm{(iii)}} $K$ is l.s.c.\ on $U\times \overline U$ when $U$ is given the fine
topology and $\overline U$ the natural topology.
\end{prop}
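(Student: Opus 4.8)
The plan is to deduce all three assertions from the compactness results of Section \ref{sec2}, principally Theorem \ref{thm2.9}, which identifies the natural topology on $\mathcal S(U)\cup\{+\infty\}$ with the topology $\mathcal T$ of convergence in graph, and asserts that along a convergent filter $\mathcal F$ one has $\lim_{\mathcal F}=\lim\widehat{\inf}_{\mathcal F}$. First I would dispose of (ii): for $Y\in U$ we have $K(\cdot,Y)=P_Y=G_U(\cdot,Y)/\Phi(G_U(\cdot,Y))$, a positive multiple of the fine Green function, hence in $\mathcal S(U)$ and finely continuous; for $Y\in\Delta(U)$, $K(\cdot,Y)=Y\in B\subset\mathcal S(U)$, so it is finely superharmonic and therefore finely continuous by \cite[Theorem 9.10]{F1}. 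That is immediate. For (i), fix $x\in U$ and let $(Y_k)$ be a sequence in $\overline U$ converging in the natural topology to $Y\in\overline U$; since $\overline U$ is metrizable it suffices to treat sequences. Then $K(\cdot,Y_k)\to K(\cdot,Y)$ in $\mathcal S(U)$ with the natural topology, so by Theorem \ref{thm2.9} (applied to the sequential filter), $K(\cdot,Y)=\lim\widehat{\inf}_k K(\cdot,Y_k)$ on $U$. Evaluating at $x$ and using that the l.s.c.\ regularized infimum is pointwise $\le$ the infimum, together with a standard liminf estimate, gives $K(x,Y)\le\liminf_k K(x,Y_k)$; more precisely, for each $m$, $\widehat{\inf}_{k\ge m}K(\cdot,Y_k)(x)\le\inf_{k\ge m}K(x,Y_k)$, and letting $m\to\infty$ yields $K(x,Y)=\sup_m\widehat{\inf}_{k\ge m}K(\cdot,Y_k)(x)\le\liminf_k K(x,Y_k)$. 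This is exactly lower semicontinuity of $K(x,\cdot)$ at $Y$, and since $Y$ was arbitrary, (i) follows. (One could alternatively invoke Corollary \ref{cor2.12}: the evaluation $u\mapsto u(x)$ is l.s.c.\ on $\mathcal S(U)$, and $Y\mapsto K(\cdot,Y)$ is continuous from $\overline U$ into $\mathcal S(U)$ by construction, whence the composition $Y\mapsto K(x,Y)$ is l.s.c.)

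For (iii), the joint lower semicontinuity, I would argue as follows. Let $(x_\alpha,Y_\alpha)\to(x,Y)$ in $U\times\overline U$, $U$ carrying the fine topology and $\overline U$ the natural topology; we must show $K(x,Y)\le\liminf_\alpha K(x_\alpha,Y_\alpha)$. By Theorem \ref{thm2.9}, passing to the convergent filter generated by the net of restrictions, we have $K(\cdot,Y)=\lim\widehat{\inf} K(\cdot,Y_\alpha)$ on $U$, that is $K(\cdot,Y)=\sup_M\widehat{\inf}_{Y_\alpha\in M}K(\cdot,Y_\alpha)$ over members $M$ of that filter. Fix such an $M$ and set $v_M=\inf_{Y_\alpha\in M}K(\cdot,Y_\alpha)$, a non-negative finely hyperharmonic-type infimum whose fine l.s.c.\ regularization $\widehat v_M$ agrees with $v_M$ quasi-everywhere on $U$, hence on a finely dense set, and $\widehat v_M$ is finely continuous. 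Since $x_\alpha\to x$ finely and eventually $Y_\alpha\in M$, we get $\liminf_\alpha K(x_\alpha,Y_\alpha)\ge\liminf_\alpha v_M(x_\alpha)\ge \fine\liminf_{z\to x}\widehat v_M(z)=\widehat v_M(x)$ by fine continuity of $\widehat v_M$ — here one must be a little careful, using that $v_M\ge\widehat v_M$ everywhere so that $v_M(x_\alpha)\ge\widehat v_M(x_\alpha)$, and then fine continuity at $x$. Taking the supremum over $M$ gives $\liminf_\alpha K(x_\alpha,Y_\alpha)\ge\sup_M\widehat v_M(x)=K(x,Y)$, which is the desired inequality.

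The main obstacle is the interchange of the fine-limit in the $U$-variable with the natural-topology-limit in the $\overline U$-variable in part (iii): the fine topology is not metrizable and not first countable, so one cannot reduce to sequences there, and one must phrase the argument filter-theoretically and exploit that the regularized infimum $\widehat v_M$ is genuinely finely continuous (Theorem \cite[Theorem 9.10]{F1}) rather than merely finely l.s.c., in order to convert a fine $\liminf$ into an honest lower bound at the limit point $x$. Everything else is bookkeeping: (ii) is essentially a definition-chase, and (i) is the special case of (iii) in which the $U$-variable is held fixed, though it is cleanest to record it separately via Corollary \ref{cor2.12}.
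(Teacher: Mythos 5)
Your proof is correct, and parts (i) and (ii) coincide with the paper's own argument (the paper proves (i) precisely by composing the l.s.c.\ evaluation $u\mapsto u(x)$ of Corollary \ref{cor2.12} with the identification $Y\mapsto K(.,Y)$, and dismisses (ii) as obvious). For (iii), however, you take a genuinely different route. The paper fixes $(x_0,Z)$, truncates at a level $c$, forms $k_j=\inf_{Y\in V_j}K(.,Y)\wedge c$ over a countable neighborhood base $(V_j)$ of $Z$, and then invokes the Brelot property of \cite{F3} to produce a Euclidean-compact fine neighborhood $H$ of $x_0$ on which the countably many functions $\widehat k_j$ and $K(.,Z)\wedge c$ are Euclidean-continuous; Dini's theorem then makes the increasing convergence $\widehat k_j\uparrow K(.,Z)\wedge c$ uniform on $H$, which yields the explicit estimate $K(x,Y)\wedge c\ge K(x_0,Z)\wedge c-2\eps$ on a product neighborhood $W\times V_j$. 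You instead use the net characterization of lower semicontinuity in the (non-first-countable) product topology, apply Theorem \ref{thm2.9} to the image filter of the net $(K(.,Y_\alpha))$ to get $K(.,Y)=\sup_M\widehat v_M$, and pass to the limit along the net via $K(x_\alpha,Y_\alpha)\ge v_M(x_\alpha)\ge\widehat v_M(x_\alpha)\to\widehat v_M(x)$, using that each regularized tail-infimum $\widehat v_M$ lies in ${\cal S}(U)\cup\{+\infty\}$ and is therefore finely continuous --- the same fundamental-convergence fact the paper uses for its $\widehat k_j$, so there is no hidden gap there. What each approach buys: yours is shorter and dispenses with the truncation, the Brelot property and Dini; the paper's stays within a countable, sequential framework (only the neighborhood base at $Z$ is quantified over) and produces an explicit uniform lower bound on an actual product neighborhood rather than a net-liminf inequality, at the cost of the compact-fine-neighborhood machinery. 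Both arguments ultimately rest on Theorem \ref{thm2.9}'s identification of natural limits with finely regularized lim-infs.
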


\begin{proof} (i) follows from Corollary \ref{cor2.12} applied to $u=K(.,Y)$
while identifying $K(.,Y)$ with $Y$.

(ii) is obvious.

(iii) Let $x_0\in U$, $Z\in\overline U$, and let $(V_j)$ be a fundamental
system of open neighborhoods of $Z$ in ${\overline U}$ such that
$V_{j+1}\subset V_j$ for any $j$. For a given constant $c>0$ consider
the increasing sequence of functions
$$k_j:={\inf}_{Y\in V_j}K(.,Y)\wedge c$$
and their finely l.s.c.\ regularizations ${\widehat k_j}\in{\cal S}(U)$.
By the Brelot property, cf.\ \cite{F3}, there exists a fine
neighborhood $H$ of  $x_0$ in $U$ such that $H$ is compact (in the Euclidean
topology) and that the restrictions of the
functions ${\widehat k}_j\in {\cal S}(U)$ and of $K(.,Z)\wedge c\in\cal S(U)$
to $H$ are continuous on $H$ (again with the induced Euclidean topology).
By (i) we have pointwise on $U$
$$K(.,Z)\wedge c=\underset{Y\to Z}{\liminf}\,K(.,Y)\wedge c
=\sup_j\underset{Y\in V_j}{\inf}\,K(.,Y)\wedge c,$$
which quasi-everywhere and hence everywhere on $U$ equals
$\sup_j\underset{Y\in V_j}{\widehat\inf}\,K(.,Y)\wedge c\in\cal S(U)$.
By Theorem \ref{thm2.9} and Dini's theorem there exists for given $\eps>0$
an integer $j_0>0$ such that
$$K(.,Z)\wedge c=\sup_j\underset{Y\in V_j}{\widehat{\inf}}K(.,Y)\wedge c
=\sup_j{\widehat k}{}_j<{\widehat k}{}_i+\eps$$
on $H$ for any $i\ge j_0$. For any fine
neighborhood $W$ of $x_0$ with $W\subset H$ we have
\begin{eqnarray*}
\inf_{x\in W,\,Y\in V_j}K(x,Y)\wedge c
&=&\inf_{x\in W}\,k_j(x)\ge\inf_{x\in W}\,{\widehat k}{}_j(x)\\
&\ge&\inf_{x\in W}\,K(x,Z)\wedge c-\eps\ge K(x_0,Z)\wedge c-2\eps
\end{eqnarray*}
for $j\ge j_0$. The assertion (iii) follows by letting $\eps\to0$ and next
$c\to+\infty$.
\end{proof}

\begin{remark}\label{remark3.1c} The Riesz-Martin kernel $K$ is in general not
l.s.c.\ in the product of the induced natural topology on $U$ and the natural
topology on $\overline U$. Not even the function
$K(.,y)=G_U(.,y)/\Phi(G_U(.,y))$, or equivalently $G_U(.,y)$ itself, is
l.s.c.\ on $U$ with the induced natural topology for fixed $y\in U$. For if
the set $V:=\{x\in U:G_U(x,y)>1\}$ were open for every $y\in U$ then $U$ would
be a natural neighborhood of $y$ for every $y\in U$, that is,
$U$ would be naturally open in $\overline U$. But that is in general not the
case because $\overline U\setminus U=\Delta(U)$ need not be naturally compact,
see Example \ref{example3.5} below.
\end{remark}

\begin{remark}\label{remark3.1d} A set $A\subset U$ is termed a Euclidean
nearly Borel set if it differs by a polar set from a Euclidean Borel set.
We denote by $\cal B(U)$, resp.\ $\cal B^*(U)$, the $\sigma$-algebra of all
Euclidean Borel, resp.\ nearly Borel subsets of $U$. Every finely open set
$V\subset U$ is Euclidean nearly Borel because its regularized $r(V)$ is a
Euclidean $F_\sigma$-set and $r(V)\setminus V$ is polar. It follows that every
open subset $W$ of $U\times\overline U$, now with the fine topology on $U$
(and of course the natural topology on $\overline U$), belongs to the
$\sigma$-algebra $\cal B^*(U)\times\cal B(\overline U)$ generated by all sets
$A_1\times A_2$ where $A_1\in\cal B^*(U)$ and where
$A_2\in\cal B(\overline U)$, that is, $A_2$ is a Borel subset of
$\overline U$. In view of Proposition \ref{prop3.1b} (iii) every set
$\{(x,Y)\in U\times\overline U:K(x,Y)>\alpha\}$ ($\alpha\in\RR$) is such an
open set $W$ and therefore belongs to $\cal B^*(U)\times\cal B(\overline U)$.
This means that the Riesz-Martin kernel $K$ is measurable
with respect to $\cal B^*(U)\times\cal B(\overline U)$.
\end{remark}

\begin{definition}\label{def3.2}
An invariant function $h\in {\cal S}(U)$ is termed \textit{minimal}
if it belongs to an extreme generator of the cone
${\cal S}(U).$
\end{definition}

Recall that $\Ext(B)$ denotes the set of extreme points of $B$, and
$\Ext_i(B)$ the subset of minimal invariant functions in $B$.

\begin{prop}\label{prop3.3}
For any point $Y\in \Delta(U)$, if the function $K(.,Y)$ is an
extreme point of $B$, then  $K(.,Y)$ is a minimal invariant function.
\end{prop}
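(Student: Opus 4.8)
The plan is to show that if $K(.,Y)$ is an extreme point of $B$ for some $Y\in\Delta(U)$, then $K(.,Y)$ must be invariant; minimality then comes for free since, by Definition \ref{def3.2}, a minimal invariant function is precisely an invariant function that generates an extreme ray of $\cal S(U)$, and extreme points of the base $B$ generate extreme rays. So the whole content is: \emph{an extreme point of $B$ coming from a boundary point cannot be a fine potential.} By the fine Riesz decomposition theorem (as already invoked just before Proposition \ref{prop2.8}), every element of $\Ext(B)$ is either a fine potential or an invariant function, so it suffices to rule out the potential alternative.

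First I would recall the integral-representation description of extreme potentials stated in the excerpt: by the representation theorem for fine potentials \cite{F5}, every element of $\Ext_p(B)$ has the form $\alpha G_U(.,y)$ with $\alpha>0$ and $y\in U$, i.e.\ it is $K(.,y)=P_y=\phi(y)$ for some $y\in U$. Thus if $K(.,Y)\in\Ext_p(B)$ then $K(.,Y)=\phi(y)$ for some $y\in U$. But $Y\mapsto K(.,Y)$ is a bijection of $\overline U$ onto $B$ (as noted right after Definition \ref{def3.1}), and on $U$ it restricts to the injection $\phi$; hence $K(.,Y)=\phi(y)=K(.,y)$ forces $Y=y\in U$, contradicting $Y\in\Delta(U)=\overline U\setminus U$. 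Therefore $K(.,Y)\notin\Ext_p(B)$, so $K(.,Y)\in\Ext_i(B)$, i.e.\ $K(.,Y)$ is a minimal invariant function.

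I expect there is essentially no obstacle here: the statement is a bookkeeping consequence of (a) the dichotomy extreme $=$ potential or invariant, and (b) the explicit form of extreme potentials, both of which are available in the excerpt, together with (c) the injectivity of $\phi$ and the fact that $K(.,Y)=Y$ for $Y\in\Delta(U)$. The only point requiring a line of care is (c): one must note that $\phi$ is injective (already observed via $y=\{x\in U:\phi(y)(x)=+\infty\}$) and that the values of $K(.,\cdot)$ on $U$ and on $\Delta(U)$ exhaust $B$ without overlap, so that a boundary point's kernel cannot accidentally coincide with an interior point's Green-kernel-type function. Once that is in place the proof is immediate.

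\begin{proof}
By the fine Riesz decomposition theorem, every extreme point of $B$ is either a fine potential or an invariant function. Suppose $K(.,Y)\in\Ext(B)$ for some $Y\in\Delta(U)$, and assume for contradiction that $K(.,Y)$ is a fine potential, that is, $K(.,Y)\in\Ext_p(B)$. By the integral representation theorem for fine potentials \cite{F5}, recalled above, there then exist a constant $\alpha>0$ and a point $y\in U$ with $K(.,Y)=\alpha\,G_U(.,y)$. Since $K(.,Y)\in B$, applying $\Phi$ gives $\alpha=1/\Phi(G_U(.,y))$, so $K(.,Y)=P_y=\phi(y)=K(.,y)$. But the map $Z\mapsto K(.,Z)$ is a bijection of $\overline U$ onto $B$, and it equals $\phi$ on $U$; hence $K(.,Y)=K(.,y)$ implies $Y=y\in U$, contradicting $Y\in\Delta(U)=\overline U\setminus U$. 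Therefore $K(.,Y)$ is not a fine potential, so $K(.,Y)$ is an invariant function. Being moreover an extreme point of $B$, it belongs to an extreme generator of $\cal S(U)$, and hence $K(.,Y)$ is a minimal invariant function by Definition \ref{def3.2}.
\end{proof}
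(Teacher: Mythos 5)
Your proof is correct and follows essentially the same route as the paper: use the fine Riesz decomposition to see that an extreme point of $B$ is either a fine potential or an invariant function, invoke the integral representation of fine potentials to identify an extreme potential with some $P_y$, $y\in U$, and observe that this contradicts $Y\in\Delta(U)=\overline U\setminus U$. Your extra care about the injectivity of $\phi$ and the bijection $Z\mapsto K(.,Z)$ simply makes explicit the identification the paper uses implicitly.
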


\begin{proof}
By the Riesz decomposition of functions from ${\cal S}(U)$, $K(.,Y)$
is either a minimal fine potential on $U$ or else a minimal
invariant function on $U$.  In the former case it follows by the
integral representation of fine potentials that $K(.,Y)$ must be equal
to $P_y$ for some $y\in U$, which contradicts $Y\in\Delta(U)$. Thus
$K(.,Y)$ is indeed a minimal invariant function. The converse is obvious.
\end{proof}

\begin{definition}\label{def3.4}
A  point $Y\in \Delta(U)$ is termed minimal if the function $K(.,Y)$ is minimal,
that is, it belongs to an extreme generator of the cone ${\cal S}(U).$
\end{definition}

We denote by $\Delta_1(U)$ the set of all minimal points of $\Delta(U)$.
Contrary to the case where $U$ is Euclidean open in $\Omega$,
$\Delta(U)$ is in general not compact (in the natural topology), as
shown by the following example.

\begin{example}\label{example3.5} Let $\omega$ be a H\"older domain in
$\RR^n$ ($n\ge 2)$ such that $\omega$ is irregular with a single
irregular boundary point $z$ (for example a Lebesgue spine), and take
$U=\omega\cup\{z\}$. According to \cite[Theorems 1 and 3.1]{Ai} the
Euclidean boundary $\partial \omega$ of $\omega$ is contained in
$\Delta(\omega)$. It follows that $z$ belongs to the Euclidean closure
of $\Delta(\omega)\setminus \{z\}$. But
$\Delta(U)=\Delta(\omega)\setminus\{z\}$, where $z$ is identified with
$P_z$, and since $\Delta(\omega)$ is compact we infer that
$\Delta(U)$ is noncompact.
\end{example}

However, we have the following
\begin{prop}\label{prop3.6}
$\Delta(U)$ is a $G_\delta$ of \,$\overline U$.
\end{prop}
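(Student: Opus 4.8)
The plan is to show that $\Delta(U)=\overline U\setminus U$ is a $G_\delta$ in $\overline U$ by exhibiting $U$ as an $F_\sigma$ subset of $\overline U$, i.e.\ as a countable union of sets closed in $\overline U$. The natural candidate is to use the sequence $(H_j)$ of Euclidean compact subsets of $U$ provided by Corollary \ref{cor2.8}, for which $U=P\cup\bigcup_jH_j$ with $P$ polar. First I would observe that each $H_j$, being Euclidean compact and contained in $U$, is mapped by $\varphi$ to a subset of $\overline U$; the point to check is that $\varphi(H_j)$ is closed in $\overline U$, equivalently compact in the natural topology. Since $H_j$ is Euclidean compact and the functions $G_U(\cdot,y)$ depend on $y$ in a way that is continuous for the natural topology on $\mathcal S(U)$ when $y$ ranges over a Euclidean compact subset of $U$ (the map $y\mapsto G_U(\cdot,y)$ being continuous from $U$ with the Euclidean topology into $\mathcal S(U)$ with the natural topology, by the convergence-in-graph description and l.s.c.\ of $G_U$ on $H_j$ from Theorem \ref{thm2.7}), the image $\varphi(H_j)=\{P_y:y\in H_j\}$ is the continuous image of a compact set, hence compact in $B$, hence closed in $\overline U$.

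The remaining difficulty is the polar exceptional set $P$: we have $U=P\cup\bigcup_jH_j$ but $P$ need not be covered by the $H_j$. However $P$ is polar, hence countable-to-one is not enough; instead I would handle each point of $P$ (or rather enlarge the family) as follows. For each $y\in U$ the singleton $\{y\}$ corresponds to the single point $P_y\in\overline U$, and $\{P_y\}$ is trivially closed in $\overline U$ (the natural topology is Hausdorff, being metrizable by the distance $d$ of Section~\ref{sec2}). A polar set in $\Omega$ is contained in a polar $G_\delta$, but more usefully: I would instead apply Corollary \ref{cor2.8} not to $U$ directly but observe that we may choose the compact sets to exhaust $U$ up to a polar set, and then separately note that any polar subset of $U$ is a countable union of its points only if it is countable, which it need not be. The cleaner route: replace $P$ by writing $U$ itself as a Euclidean $K_\sigma$ (which it is, $U$ being a regular fine domain, hence a Euclidean $K_\sigma$ subset of $\Omega$ as recorded just before Theorem \ref{thm2.6}), say $U=\bigcup_jL_j$ with $L_j$ Euclidean compact and $L_j\subset U$; then $\varphi(L_j)$ is compact in $\overline U$ by the same continuity argument, and $U=\varphi^{-1}$-image gives $\varphi(U)=\bigcup_j\varphi(L_j)$, an $F_\sigma$ in $\overline U$. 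Hence $\Delta(U)=\overline U\setminus\varphi(U)$ is a $G_\delta$.

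The step I expect to be the main obstacle is justifying that $y\mapsto P_y=G_U(\cdot,y)/\Phi(G_U(\cdot,y))$ is continuous from $L_j$ (Euclidean topology) into $B$ (natural topology), and in particular that $\Phi(G_U(\cdot,y))$ stays bounded away from $0$ and $+\infty$ on $L_j$ so that the normalization behaves well. Boundedness below of $\Phi(G_U(\cdot,y))$ follows since $G_U(\cdot,y)\neq 0$ and $\Phi$ is a strictly positive continuous affine form on the compact set $\overline{\varphi(L_j)}$; for the continuity of $y\mapsto G_U(\cdot,y)$ I would invoke the convergence-in-graph characterization of the natural topology (Corollary \ref{cor2.10}) together with Theorem \ref{thm2.7}, which guarantees that on a Euclidean compact $K_j\subset U$ every function of $\mathcal S(U)$—in particular each $G_U(\cdot,y)$—is Euclidean l.s.c., and with the joint lower semicontinuity and finite-ness properties of the fine Green kernel $G_U$ from \cite{F2}. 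Once this continuity is in hand, the compactness of each $\varphi(L_j)$ and the identity $\varphi(U)=\bigcup_j\varphi(L_j)$ close the argument, giving $\Delta(U)=\bigcap_j(\overline U\setminus\varphi(L_j))$ as a countable intersection of open sets, i.e.\ a $G_\delta$.
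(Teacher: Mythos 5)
Your overall skeleton (exhibit $\varphi(U)$ as a countable union of naturally compact subsets of $\overline U$, so that $\Delta(U)=\overline U\setminus\varphi(U)$ is a countable intersection of naturally open sets) is the same as the paper's, but the step you yourself flag as the main obstacle is a genuine gap, and it is exactly the point the paper's proof is built to circumvent. You need each $\varphi(L_j)$ to be compact in the natural topology, i.e.\ $y\mapsto P_y=G_U(\cdot,y)/\Phi(G_U(\cdot,y))$ to be continuous from the \emph{Euclidean} topology on a Euclidean compact $L_j\subset U$ into $B$. The paper only proves \emph{fine} continuity of $y\mapsto G_U(\cdot,y)$ (Lemma \ref{lemma4.2}(a), Corollary \ref{cor4.3}), and since the fine topology is strictly finer than the Euclidean one, Euclidean convergence $y_m\to y$ in $L_j$ gives no control on $G_U(\cdot,y_m)$. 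Theorem \ref{thm2.7} cannot supply the missing continuity: it concerns lower semicontinuity in the variable $x$ on the compacts $K_j$, not any regularity in the pole $y$. Concretely, if $y_m\to y$ Euclidean with all points in $U$ but the $y_m$ approaching a non-polar part of $\complement U$ (say $U=(\Omega\setminus F)\cup\{z\}$ with $F$ a spine whose tip $z$ is irregular, $y=z$, and $y_m$ chosen near $F$), then $\Phi(G_U(\cdot,y_m))$ can tend to $0$ and $P_{y_m}$ can converge naturally to a point of $\Delta(U)$, so $\varphi(L_j)$ need not be closed in $\overline U$ and $\inf_{y\in L_j}\Phi(G_U(\cdot,y))$ can indeed be $0$. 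Your proposed remedy is circular: every element of $B$, in particular every point of $\overline{\varphi(L_j)}$, satisfies $\Phi=1$ by the very normalization, so positivity of $\Phi$ on that compact set says nothing about a lower bound for the unnormalized quantity $\Phi(G_U(\cdot,y))$, $y\in L_j$.

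This is why the paper does not cover $U$ by arbitrary Euclidean compacts but by the sets $A_{kl}=\{y\in U:\Phi(G_U(\cdot,y))\ge 1/k\}\cap\overline B_l$, with the lower bound on $\Phi(G_U(\cdot,y))$ built in. Their natural compactness is then proved not via any Euclidean-to-natural continuity, but by a limit argument: for a sequence $(y_m)\subset A_{kl}$ one passes to a Euclidean-convergent subsequence, splits $\Phi(G_\Omega(\cdot,y_m))=\Phi(G_U(\cdot,y_m))+\Phi(\widehat R{}_{G_\Omega(\cdot,y_m)}^{\complement U})$, and uses Lemma \ref{lemma2.3} (invariance of the restricted swept function) together with Lemma \ref{lemma2.1} to get $\Phi(\widehat R{}_{G_\Omega(\cdot,y)}^{\complement U})\le\lim_m\Phi(\widehat R{}_{G_\Omega(\cdot,y_m)}^{\complement U})$; this forces the Euclidean limit $y$ to lie in $A_{kl}$ and prevents the normalized Green functions from degenerating (the limit $s$ satisfies $\Phi(s)\ge1/k>0$), so the natural limit stays in $\varphi(A_{kl})$ instead of escaping to the boundary. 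If you want to repair your argument, you must either restrict to compacts on which $\Phi(G_U(\cdot,\cdot))$ is bounded below (which is essentially the paper's $A_{kl}$) or prove an analogue of that limit argument; as written, the claim ``$\varphi(L_j)$ is compact'' is false in general.
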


\begin{proof}
Let $(B_l)$ be a sequence of open balls in $\RR^n$ with Euclidean closures
$\overline B_l\subset\Omega$ and such that $\Omega=\bigcup_lB_l$.
For integers $k,l>0$ put
$$A_{kl}= \{y\in U: \Phi(G_U(.,y))\ge 1/k\}\cap {\overline B_l}.$$
The sets $A_{kk}$ cover $U$ because $G_U(.,y)>0$ and hence
$\Phi(G_U(.,y))>0$. We first show that each $A_{kl}$
is compact in $\overline U$ with the natural topology. Let $(y_j)$ be a
sequence of points of $A_{kl}$.
After passing to a subsequence we may suppose that $(y_j)$
converges Euclidean to a point $y\in {\overline B_l}$, and that the
sequences $(G_U(.,y_j))$ and $({\widehat R}{}_{G_{\Omega}(.,y_j)}^{\complement U})$
(restricted to $U$) converge in ${\cal S}(U)\cup\{+\infty\}$. It follows that
(with $G_\Omega(.,y_j)$ restricted to $U$)
$$\Phi(G_\Omega(.,y_j))
=\Phi(G_U(.,y_j))+\Phi({\widehat R}{}_{G_{\Omega}(.,y_j)}^{\complement U}),$$
and hence by passing to the limit in $\cal S(U)\cup\{+\infty\}$
as $j\to+\infty$
\begin{eqnarray}
\Phi(G_\Omega(.,y))
=\lim_j\Phi(G_U(.,y_j))+\lim_j\Phi({\widehat R}{}_{G_{\Omega}(.,y_j)}^{\complement U}).
\end{eqnarray}
On the other hand,
$${\widehat R}^{\complement U}_{G_\Omega(.,y)}
={\widehat R}^{\complement U}_{\lim{\widehat{\inf}_j}G_\Omega(.,y_j)}\le
\underset{j}{\lim{\widehat{\inf}}}\,{\widehat R}^{\complement U}_{G_\Omega(.,y_j)}.$$
The restriction of the function ${\widehat R}^{\complement U}_{G_\Omega(.,y)}$
to $U$ being invariant according to Lemma \ref{lemma2.3} it follows by Lemma
\ref{lemma2.1} that
${\widehat R}{}^{\complement U}_{G_\Omega(.,y)}\preccurlyeq
  {\lim\widehat{\inf}}_j{\widehat R}^{\complement U}_{G_\Omega(.,y_j)}$
(after restriction to  $U$ here, and often in the rest of the proof),
and hence
$$\Phi({\widehat R}{}^{\complement U}_{G_{\Omega}(.,y)})\le
\Phi(\lim\widehat{\inf}{}_j\widehat R{}^{\complement U}_{G_\Omega(.,y_j)})
=\lim_j\Phi({\widehat R}^{\complement U}_{G_\Omega(.,y_j)}).$$
We infer from (3.1) by the definition of $A_{kl}$ that
$$\Phi(G_\Omega(.,y))\ge \frac{1}{k}
+\Phi({\widehat R}^{\complement U}_{G_\Omega(.,y)}),$$
and consequently $y\in U$ and $\Phi(G_U(.,y))\ge 1/k$, whence
$y\in A_{kl}$.
Now put $s={\lim}_jG_U(.,y_j).$
We have $s>0$ because $y_j\in A_{kl}\subset U$ and hence
$$\Phi(s)=\lim_j\Phi(G_U(.,y_j))\ge1/k.$$
Hence
$$\phi(y_j)
=\frac{G_U(.,y_j)}{\Phi(G_U(.,y_j))}\to\frac{s}{\Phi(s)}\in\cal S(U).$$
This shows that $A_{kl}$ is compact in
the natural topology on $\overline U$, and consequently
$\Delta(U)=\overline U\setminus U= \bigcap_{kl}(\overline U\setminus A_{kl})$
is indeed a $G_\delta$ in $\overline U$.
\end{proof}

\begin{cor}\label{cor3.7a}
$\Delta_1(U)$ is a $G_\delta$ of ${\overline U}$.
\end{cor}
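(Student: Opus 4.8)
The plan is to exhibit $\Delta_1(U)$ as the intersection of $\Delta(U)$, which is a $G_\delta$ in $\overline U$ by Proposition \ref{prop3.6}, with the set of those $Y\in\overline U$ for which $K(.,Y)$ is an extreme point of $B$. By Proposition \ref{prop3.3}, a point $Y\in\Delta(U)$ is minimal precisely when $K(.,Y)\in\Ext(B)$, so $\Delta_1(U)=\Delta(U)\cap\{Y\in\overline U:K(.,Y)\in\Ext(B)\}$. Since the bijection $Y\mapsto K(.,Y)$ of $\overline U$ onto $B$ is a homeomorphism for the natural topologies, and since $\Ext(B)$ is a $G_\delta$ subset of $B$ by Choquet's theorem (quoted just before Proposition \ref{prop2.8}), the preimage $\{Y\in\overline U:K(.,Y)\in\Ext(B)\}$ is a $G_\delta$ in $\overline U$. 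An intersection of two $G_\delta$ sets is again a $G_\delta$, so $\Delta_1(U)$ is a $G_\delta$ in $\overline U$.

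First I would recall that $\overline U$ carries the topology induced from the natural topology on the compact base $B$ (indeed $\overline U$ is by definition the closure of $\varphi(U)$ in $B$), and that $Y\mapsto K(.,Y)$ is by construction the restriction to $\overline U$ of the identity embedding of $\overline U$ into $B$; hence it is trivially a homeomorphism onto its image, which is all of $B$ since $\overline U$ is closed in $B$ and contains $\varphi(U)$, whose closure is $B$ (as $B$ is the closed convex hull of its extreme points, and $\Ext(B)\subset\varphi(U)\cup\Delta_1(U)\subset\overline U$). Next I would write $\Ext(B)=\bigcap_m G_m$ with each $G_m$ open in $B$, so that $\{Y:K(.,Y)\in\Ext(B)\}=\bigcap_m\{Y\in\overline U:K(.,Y)\in G_m\}$ is a countable intersection of relatively open subsets of $\overline U$. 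Then I would combine this with Proposition \ref{prop3.6} and with the identification of minimal points supplied by Proposition \ref{prop3.3}.

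The only point requiring a little care is the passage from ``$K(.,Y)$ is a minimal invariant function'' to ``$K(.,Y)\in\Ext(B)$'': a minimal invariant function lies on an extreme ray of $\cal S(U)$, and since $B$ is a base it meets each extreme ray in exactly one point, which is then an extreme point of $B$; conversely an extreme point of $B$ lying in $\Delta(U)$ is minimal invariant by Proposition \ref{prop3.3}. Thus on $\Delta(U)$ the two conditions coincide, and the corollary follows. I do not anticipate a genuine obstacle here — the result is essentially the observation that the intersection of two $G_\delta$'s is a $G_\delta$, combined with the Choquet fact that $\Ext(B)$ is a $G_\delta$ in a metrizable compact base.
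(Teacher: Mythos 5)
Your proof is correct and takes essentially the same route as the paper: it writes $\Delta_1(U)=\Delta(U)\cap\Ext(B)$ (using Proposition \ref{prop3.3} to identify minimal points with extreme points lying in $\Delta(U)$) and then combines Proposition \ref{prop3.6} with Choquet's theorem that $\Ext(B)$ is a $G_\delta$ of the metrizable compact base $B$, the intersection of two $G_\delta$'s being a $G_\delta$. Your side claim that the image of $\overline U$ under $Y\mapsto K(.,Y)$ is all of $B$ is not actually justified by the closed-convex-hull argument (the closure of $\varphi(U)$ itself need not be $B$), but it is also not needed: continuity of the identity embedding $\overline U\hookrightarrow B$ already makes $\overline U\cap\Ext(B)$ a relative $G_\delta$, so the proof stands.
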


\begin{proof}
Because $\Delta_1(U)=\Delta(U)\cap \Ext_i(B)$ it suffices according to
Proposition \ref{prop3.6}  to use the well-known fact that $\Ext(B)$ is a
$G_\delta$ of $B$.
\end{proof}

The proof of Proposition \ref{prop3.6} also establishes

\begin{cor}\label{cor3.8} For any integers $k,l>0$ the set
$C_{kl}:=\{P_y: y\in A_{kl}\}$
is compact in $B$, and the mapping $\varphi: y\mapsto P_y$
is a homeomorphism of \,$A_{kl}$ onto $C_{kl}$.
\end{cor}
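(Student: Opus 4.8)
The plan is to extract the needed facts directly from the internal workings of the proof of Proposition \ref{prop3.6}. The compactness of $C_{kl} = \varphi(A_{kl})$ in $B$ is in fact exactly what was shown in that proof: starting from an arbitrary sequence $(y_j)$ in $A_{kl}$, one passes to a subsequence along which $(y_j)$ converges Euclidean to some $y$, along which $(G_U(\cdot,y_j))$ and $({\widehat R}{}^{\complement U}_{G_\Omega(\cdot,y_j)})$ converge in ${\cal S}(U)\cup\{+\infty\}$, and one concludes that $y\in A_{kl}$ and that $\varphi(y_j)=P_{y_j}\to s/\Phi(s)$ with $s=\lim_j G_U(\cdot,y_j)$; moreover $s/\Phi(s)=P_y$ since $s=G_U(\cdot,y)$ once one knows $y\in A_{kl}\subset U$. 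Hence every sequence in $C_{kl}$ has a subsequence converging in $C_{kl}$, so $C_{kl}$ is compact (the natural topology on $B$ being metrizable). Since $\varphi$ is injective (noted just after the definition of $\varphi$), it is a continuous bijection of $A_{kl}$ onto $C_{kl}$ once we check continuity.

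First I would record that $\varphi|_{A_{kl}}\colon A_{kl}\to C_{kl}$ is continuous, where $A_{kl}$ carries the Euclidean topology (as a subset of $\overline B_l\subset\Omega$). This is the content of the last part of the argument in Proposition \ref{prop3.6}: if $y_j\to y$ Euclidean in $A_{kl}$, then after passing to any subsequence along which the relevant convergences hold in ${\cal S}(U)\cup\{+\infty\}$, one gets $P_{y_j}\to P_y$ in $B$; since this limit $P_y$ is the same for every such subsequence, the full sequence $P_{y_j}$ converges to $P_y$. (Alternatively, one may invoke that $(y,x)\mapsto G_U(x,y)$ and $y\mapsto {\widehat R}{}^{\complement U}_{G_\Omega(\cdot,y)}$ behave lower semicontinuously, combined with the additivity relation $\Phi(G_\Omega(\cdot,y_j))=\Phi(G_U(\cdot,y_j))+\Phi({\widehat R}{}^{\complement U}_{G_\Omega(\cdot,y_j)})$ and continuity of $y\mapsto G_\Omega(\cdot,y)$, to squeeze the limits and force $G_U(\cdot,y_j)\to G_U(\cdot,y)$ naturally.)

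Finally, to upgrade the continuous bijection $\varphi|_{A_{kl}}$ to a homeomorphism, I would invoke the elementary topological fact that a continuous bijection from a compact space to a Hausdorff space is a homeomorphism. Concretely: $A_{kl}$ is Euclidean compact (it is the intersection of the Euclidean closed set $\{y\in U:\Phi(G_U(\cdot,y))\ge 1/k\}\cup(\Omega\setminus U)$—note $A_{kl}\subset U$ so this causes no trouble—with the Euclidean compact ball $\overline B_l$; alternatively $A_{kl}$ is closed in the Euclidean compact $\overline B_l$ because $A_{kl}$ was shown above to contain all its Euclidean limit points). The target $C_{kl}\subset B$ is Hausdorff since $B$ is metrizable. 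Hence $\varphi|_{A_{kl}}$ carries Euclidean-closed subsets of $A_{kl}$ to compact, hence closed, subsets of $C_{kl}$, so its inverse is continuous.

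The only mildly delicate point—the ``main obstacle''—is making sure that the sequential arguments inherited from Proposition \ref{prop3.6} really deliver continuity of the \emph{whole} sequence (not just a subsequence), which follows because the sub-subsequence limit $P_y$ is uniquely determined, and that the two topologies in play ($A_{kl}$ Euclidean, $C_{kl}\subset B$ natural) are the ones actually used; both $\overline B_l$ and $B$ being metrizable, no issues with nets arise.
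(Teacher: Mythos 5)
Your overall architecture (read off compactness and closedness of $A_{kl}$ from the proof of Proposition \ref{prop3.6}, get continuity of $\varphi|_{A_{kl}}$ by a sub-subsequence argument, then conclude with the compact-to-Hausdorff bijection principle) is the intended reading of the paper's one-line proof, and the final topological step is fine. The genuine gap is at the crux: you assert that $s=\lim_jG_U(\cdot,y_j)=G_U(\cdot,y)$ ``once one knows $y\in A_{kl}\subset U$'', but nothing in the proof of Proposition \ref{prop3.6} as written, and nothing you add, justifies this. The convergence $y_j\to y$ is Euclidean, not fine, so Lemma \ref{lemma4.2}(a) and Corollary \ref{cor4.3} do not apply; moreover $G_U(x,\cdot)$ is not Euclidean continuous on $U$: on $U$ one has $G_U(x,\cdot)=G_\Omega(x,\cdot)-G_\Omega\eps_x^{\complement U}(\cdot)$, a continuous function (off $x$) minus an l.s.c.\ one, so it is only upper semicontinuous in the Euclidean topology, and a priori part of the mass of the potentials $G_U(\cdot,y_j)$ can leak into the limit $t$ of the invariant parts $\widehat R{}^{\complement U}_{G_\Omega(\cdot,y_j)}$. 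Your parenthetical ``squeeze'' invokes lower semicontinuity of $G_U$ and of $y\mapsto\widehat R{}^{\complement U}_{G_\Omega(\cdot,y)}$, but the available semicontinuities go the wrong way for that squeeze, so it does not close the argument; without identifying the limit $s/\Phi(s)$ you get neither compactness of $C_{kl}$ (the subsequential limit must be shown to lie \emph{in} $C_{kl}$) nor continuity of $\varphi|_{A_{kl}}$.

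What does close it, using only ingredients already in the proof of Proposition \ref{prop3.6}: restricting to $U$, $G_\Omega(\cdot,y_j)=G_U(\cdot,y_j)+\widehat R{}^{\complement U}_{G_\Omega(\cdot,y_j)}$; since $G_\Omega(\cdot,y_j)|_U\to G_\Omega(\cdot,y)|_U$ naturally and addition is naturally continuous, $G_U(\cdot,y)+\widehat R{}^{\complement U}_{G_\Omega(\cdot,y)}=s+t$, while $\widehat R{}^{\complement U}_{G_\Omega(\cdot,y)}\preccurlyeq t$ by Lemmas \ref{lemma2.1} and \ref{lemma2.3} (as in the paper). Cancelling the q.e.\ finite function $\widehat R{}^{\complement U}_{G_\Omega(\cdot,y)}$ gives $s\preccurlyeq G_U(\cdot,y)$; since $P_y$ is an extreme point of $B$ (integral representation of fine potentials, Corollary \ref{cor3.13}), this forces $s=c\,G_U(\cdot,y)$ for some $c\in[0,1]$, and $\Phi(s)=\lim_j\Phi(G_U(\cdot,y_j))\ge 1/k$ gives $c>0$, whence $s/\Phi(s)=P_y$. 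Note that this yields only proportionality, which is all the corollary needs (equality $s=G_U(\cdot,y)$, i.e.\ $c=1$, is exactly what the leakage issue puts in doubt, and is not required because the normalization by $\Phi$ absorbs $c$). One more small repair: your first justification of the Euclidean compactness of $A_{kl}$ is wrong, since $\bigl(\{y\in U:\Phi(G_U(\cdot,y))\ge1/k\}\cup(\Omega\setminus U)\bigr)\cap\overline B_l$ is not $A_{kl}$; but your alternative (the Prop.\ \ref{prop3.6} argument shows every Euclidean limit point of $A_{kl}$ lies in $A_{kl}$, and $A_{kl}\subset\overline B_l\subset\Omega$ compact) is correct and should be the one used.
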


\begin{cor}\label{cor3.9}
Let $K\subset U$ be compact in the Euclidean topology. Then the set
$C_K:=\{P_y: y\in K\}$ is a (natural) $K_\sigma$ in $B$, and so is therefore
$\Ext_p(B)=\{P_y:y\in U\}$.
\end{cor}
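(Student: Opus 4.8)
The plan is to exploit the compact sets $A_{kl}=\{y\in U:\Phi(G_U(\cdot,y))\ge 1/k\}\cap\overline{B_l}$ introduced in the proof of Proposition~\ref{prop3.6}, together with Corollary~\ref{cor3.8}. By Corollary~\ref{cor3.8}, for all integers $k,l>0$ the map $\varphi\colon y\mapsto P_y$ restricts to a homeomorphism of $A_{kl}$, equipped with its Euclidean topology, onto the set $C_{kl}\subset B$, which is compact in the natural topology; in particular $A_{kl}$ is Euclidean compact, and $\varphi|_{A_{kl}}$ carries Euclidean-compact subsets of $A_{kl}$ onto natural-compact subsets of $B$.

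Now let $K\subset U$ be compact in the Euclidean topology. Since $\Omega=\bigcup_l B_l$ and $\Phi(G_U(\cdot,y))>0$ for every $y\in U$, each point $y\in K$ lies in some $A_{kl}$, so that $K=\bigcup_{k,l}(K\cap A_{kl})$ is a countable union. Each $K\cap A_{kl}$, being the intersection of two Euclidean-compact sets, is Euclidean compact, and therefore $\varphi(K\cap A_{kl})$ is a natural-compact subset of $C_{kl}\subset B$. Consequently $C_K=\varphi(K)=\bigcup_{k,l}\varphi(K\cap A_{kl})$ is a countable union of natural-compact sets, that is, a natural $K_\sigma$ in $B$.

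Finally, since $U$ is a Euclidean $K_\sigma$ (being a regular fine domain), write $U=\bigcup_m K_m$ with each $K_m$ Euclidean compact in $U$. Then $\Ext_p(B)=\{P_y:y\in U\}=\bigcup_m C_{K_m}$, which by the previous paragraph is a countable union of natural $K_\sigma$ sets, hence itself a natural $K_\sigma$ in $B$. (Equivalently, one may observe directly that $\Ext_p(B)=\varphi(U)=\bigcup_{k,l}C_{kl}$, a countable union of the natural-compact sets $C_{kl}$.)

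I do not anticipate a genuine obstacle: the only point requiring care is the compatibility of the Euclidean and natural topologies on the sets $A_{kl}$, and this is precisely what Corollary~\ref{cor3.8} (extracted from the proof of Proposition~\ref{prop3.6}) supplies, which is exactly why these are the right sets with which to decompose $K$ and $U$.
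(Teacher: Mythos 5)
Your argument is correct and is essentially the paper's own proof: the paper likewise writes $C_K=\bigcup_k\varphi(K\cap A_{kk})$ and invokes Corollary~\ref{cor3.8}, with the statement about $\Ext_p(B)$ following from $U$ being a Euclidean $K_\sigma$ (equivalently, from $\Ext_p(B)=\bigcup_{k,l}C_{kl}$), exactly as you note. The only difference is your use of the full double family $A_{kl}$ instead of the diagonal sets $A_{kk}$, which is immaterial.
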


\begin{proof}
Since $C_K=\bigcup_k\varphi(K\cap A_k)$
the assertion follows by the preceding corollary.
\end{proof}

\begin{prop}\label{prop3.10}
Let $u\in {\cal S}(U)$ and let $A\subset{\overline A}\subset U$, where
${\overline A}$ denotes the closure of \,$A$ in
${\overline U}$. The measure $\mu$ on $B$
carried by the extreme points of \,$B$
and representing ${\widehat R}^A_u$
is then carried by ${\overline A}$.
\end{prop}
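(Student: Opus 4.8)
The plan is to reduce the statement to a statement about the Choquet‐type integral representation of $\widehat R_u^A$ and to exploit the fact that $\widehat R_u^A$, restricted to any finely open subset of $U\setminus A$, is invariant (Lemma~\ref{lemma2.3}), together with the fact that the extreme potentials in $B$ are exactly the $P_y=\varphi(y)$ for $y\in U$. First I would observe that it suffices to treat the case where $A$ is relatively compact in $U$ in the natural topology, i.e.\ $\overline A\subset U$ as assumed; then $\overline A$ is a closed, hence compact, subset of the compact set $B$ (identifying $U$ with $\varphi(U)$), and in particular $\overline A\subset\Ext_p(B)$ by Corollary~\ref{cor3.9}. Writing $\mu$ for the representing measure of $\widehat R_u^A$ carried by $\Ext(B)$ (Theorem~\ref{thm2.14}), I split $\mu=\mu_p+\mu_i$ according to the decomposition $\Ext(B)=\Ext_p(B)\cup\Ext_i(B)$ (Proposition~\ref{prop2.8}), so $\mu_p$ is carried by $\{P_y:y\in U\}$ and $\mu_i$ by $\Ext_i(B)$, and correspondingly $\widehat R_u^A=\int P_y\,d\mu_p+\int k\,d\mu_i$, the two integrals being a fine potential and an invariant function respectively by Theorem~\ref{thm2.17}.

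Next I would pin down the potential part. Since $\widehat R_u^A$ is the swept function of $u$ onto $A$, Mokobodzki's inequality gives $\widehat R_u^A\preccurlyeq u$; moreover by Lemma~\ref{lemma2.3} the restriction of $\widehat R_u^A$ to the finely open set $V:=U\setminus\widetilde A$ (or to $U\setminus\overline A$, which is finely open) is invariant. On the other hand the invariant part $\int k\,d\mu_i$ is invariant on all of $U$. By the uniqueness of the fine Riesz decomposition applied on $V$, the fine‐potential part $\int P_y\,d\mu_p$ must vanish on $V$. But $\int P_y\,d\mu_p=G_U\nu$ where $\nu$ is the image under $y\mapsto\varepsilon_y$ (suitably normalized by $\Phi(G_U(\cdot,y))^{-1}$) of $\mu_p$; a fine potential $G_U\nu$ that vanishes on the finely open set $V$ has $\nu$ carried by $U\setminus V=\widetilde A\subset U$, hence by $\overline A$ after re‑identifying with points of $B$. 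This forces $\mu_p$ to be carried by $\{P_y:y\in\widetilde A\cap U\}\subset\overline A$.

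It then remains to rule out mass of $\mu$ on $\Ext_i(B)$, i.e.\ to show $\mu_i=0$, or more precisely that $\mu_i$ is carried by $\overline A$ — but $\overline A\subset\Ext_p(B)$ is disjoint from $\Ext_i(B)$, so what is really needed is $\mu_i=0$, i.e.\ $\widehat R_u^A$ is a fine potential. This follows because $A$ is relatively compact in $U$: one has $\widehat R_u^A=\widehat R_{\,\widehat R_u^{\,U\cap b(A)}}^{A}$ and $\widehat R_u^A\le \widehat R_{u\wedge n}^A+\widehat R_u^{A\setminus\{u<n\}}$, and more directly, for $A$ with $\overline A\subset U$, $\widehat R_u^A$ is dominated by a fine potential: it vanishes outside the finely closed set $\widetilde A$, and an invariant function on $U$ that is $0$ on a nonempty finely open set must be $0$ since $U$ is a fine domain and invariant functions restricted to finely open subsets are invariant (Theorem~\ref{thm2.5}(a)), so its invariant part, being $\le\widehat R_u^A$ and vanishing on $U\setminus\widetilde A\ne\varnothing$, vanishes. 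Hence $\mu_i=0$ and $\mu=\mu_p$ is carried by $\overline A$.

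The main obstacle I anticipate is the passage from ``$G_U\nu$ vanishes on the finely open set $V$'' to ``$\nu$ is carried by $\widetilde A$'' in the fine setting, and the bookkeeping between the measure $\mu_p$ on $B\subset\mathcal S(U)$ and the measure $\nu$ on $U$ under the normalization $P_y=G_U(\cdot,y)/\Phi(G_U(\cdot,y))$ — one must check that the normalization factor is $\mu_p$‑integrable and strictly positive so that the change of variables is legitimate, which uses exactly the compactness of $C_{kl}$ and the lower bound $\Phi(G_U(\cdot,y))\ge 1/k$ on $A_{kl}$ from Corollary~\ref{cor3.8}. Establishing carefully that $\widehat R_u^A$ has no invariant part when $\overline A\subset U$ is the second delicate point; it is plausible that the cleanest route is via the identity $\widehat R_u^A=\int_U u\,d\varepsilon_x^{A\cup(\Omega\setminus U)}$ from Lemma~\ref{lemma2.2} together with the fact that the swept measures $\varepsilon_x^{A\cup(\Omega\setminus U)}$ are carried by $\partial_f A\cup(\Omega\setminus U)$ and do not charge polar sets, reducing everything to the behaviour of $u$ near $\overline A$.
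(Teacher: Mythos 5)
The decisive gap is in your second step, the claim that $\mu_i=0$. Your justification rests on the assertion that $\widehat R{}_u^A$ ``vanishes outside the finely closed set $\widetilde A$'', which is false: a reduced function of a nonzero $u\in{\cal S}(U)$ on a non-polar set $A$ is strictly positive throughout the fine domain $U$; all that is available off $A$ is fine harmonicity/invariance (Lemma \ref{lemma2.3}), not vanishing. Note also that this part of your argument uses only the fine closure $\widetilde A$, never the natural closure $\overline A$; if it were valid it would show that $\widehat R{}_u^A\in{\cal P}(U)$ whenever $\widetilde A\subset U$ and $U\setminus\widetilde A\ne\varnothing$, which contradicts \cite[Theorem 4.4]{F4} (for invariant $u\ne0$ one has $\widehat R{}_u^{U\setminus V}=u$ for suitable finely open $V$ with $\widetilde V\subset U$). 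The fact that $\widehat R{}_u^A$ is a fine potential under the hypothesis $\overline A\subset U$ is exactly Corollary \ref{cor3.11}, which the paper deduces \emph{from} Proposition \ref{prop3.10} (via Corollary \ref{cor3.22}), so it cannot be invoked here without an independent proof that genuinely uses the natural compactness of $\overline A$ inside $U$. The paper avoids the issue by truncation: $\widehat R{}_{u\wedge jp}^A$ (with $p$ a finite fine potential $>0$) is a fine potential, finely harmonic on $U\setminus\overline A$, so its representing measure is carried by the compact set $\overline A$; normalizing these measures and passing to a weak limit --- the compactness of $\overline A$ in $U$ is what prevents mass from escaping to $\Delta(U)$ --- one identifies the limit with the representing measure of $\widehat R{}_u^A$ by uniqueness in Choquet's theorem. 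Your proposal contains no substitute for this step.

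The first step (locating $\mu_p$ on $\overline A$) is close to correct but misstated. The potential part $\int P_y\,d\mu_p$ does not ``vanish on $V=U\setminus\widetilde A$'', and the auxiliary claim ``a fine potential $G_U\nu$ vanishing on a finely open set has $\nu$ carried by the complement'' is vacuous (such a potential is identically $0$). What you need, and what can be proved, is that the restriction of the potential part to $V$ is \emph{invariant} on $V$ (using Theorem \ref{thm2.5}(a) and uniqueness of the Riesz decomposition on $V$), and then the localization $G_U\lambda=G_V(\lambda_{|V})+\widehat R{}_{G_U\lambda}^{U\setminus V}$ of \cite[Lemma 2.6]{F4} together with Lemma \ref{lemma2.3} forces $G_V(\lambda_{|V})=0$, hence $\lambda(V)=0$; this is essentially the argument of Proposition \ref{prop3.15}, and with Corollary \ref{cor4.3} (fine topology finer than the induced natural topology) it places $\lambda$ on $\widetilde A\cap U\subset\overline A$. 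So that half is repairable, but without a valid proof that the representation of $\widehat R{}_u^A$ charges no minimal invariant functions, the proposition is not established.
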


\begin{proof}
Let $p$ be a finite fine potential $>0$ on $U$. For any pair $(k,l)$ and any
integer $j>0$ the function ${\widehat R}{}_{u\wedge jp}^A$
is a fine potential on $U$ and finely harmonic on $U\setminus {\overline A}$
by Lemma \ref{lemma2.3}. The measure $\mu_j$ on $B$ carried by $\Ext(B)$
and representing ${\widehat R}{}_{u\wedge jp}^A$ is carried by ${\overline A}$.
The sequence of probability measures
$\frac{1}{\Phi(\widehat R{}_{u\wedge jp}^A)}\mu_j$ has a subsequence
$(\mu_{j_k})$ which converges to a probability measure $\mu$ on
$B$ carried by ${\overline A}$. We thus have
$${\widehat R}{}_u^A=\lim_{k\to\infty}{\widehat R}{}_{u\wedge j_kp}^A
=\Phi({\widehat R}{}_u^A)\lim_{k\to\infty}\int q\,d\mu_{j_k}(q)
=\Phi({\widehat R}{}_u^A)\int q\,d\mu(q).$$
The assertion now follows by the fact that
${\overline A}\subset U\subset \Ext(B)$ and from the uniqueness of the
integral representation in Choquet's theorem.
\end{proof}

Recall from the beginning of the present section the continuous affine form
$\Phi\ge0$ on $\cal S(U)$
such that the chosen compact base $B$ of the cone $\cal S(U)$ consists of
all $u\in\cal S(U)$ with $\Phi(u)=1$. Also consider the compact sets
$A_{kl}\subset U$ in the proof of Proposition \ref{prop3.6}. Cover $\Omega$
by a sequence of Euclidean open balls $B_k$ with closures
$\overline B_k$ contained in $\Omega$.

\begin{lemma}\label{lemma4.2}
{\rm{(a)}} The mapping $U\ni y\longmapsto G_U(.,y)\in\cal S(U)$
is continuous from $U$ with the fine topology into $\cal S(U)$ with the
natural topology.

{\rm{(b)}} The function $U\ni y\longmapsto\Phi(G_U(.,y))\in \,]0,+\infty[$ is
finely continuous.

{\rm{(c)}} The sets
$$V_k=\{y\in U:\Phi(G_U(.,y))>1/k\}\cap B_k$$
form a countable cover of \,$U$ by finely open sets which are
relatively naturally compact in $U$.
\end{lemma}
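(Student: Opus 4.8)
The plan is to establish (a) first; parts (b) and (c) then follow quickly, (b) by composing with the continuous form $\Phi$, and (c) by comparing the sets $V_k$ with the compact sets $A_{kl}$ constructed in the proof of Proposition~\ref{prop3.6}.

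For (a), fix $y_0\in U$ and let $\cal N$ be the filter of fine neighbourhoods of $y_0$ in $U$. Since $y\mapsto G_U(\cdot,y)$ is injective, it suffices to show that $\lim_{\cal U}G_U(\cdot,y)=G_U(\cdot,y_0)$ in the natural topology for every ultrafilter $\cal U$ on $U$ refining $\cal N$. By Corollary~\ref{cor2.10} and Theorem~\ref{thm2.9} this limit exists in $\cal S(U)\cup\{+\infty\}$ and equals $s:=\lim\widehat{\inf}_{\cal U}G_U(\cdot,y)=\sup_{M\in\cal U}\widehat{\inf}_{y\in M}G_U(\cdot,y)$. To identify $s$ I would use that, by symmetry of the fine Green kernel (\cite{F2}), $y\mapsto G_U(x,y)=G_U(\cdot,x)(y)$ is finely superharmonic, hence finely continuous, for each $x\in U$. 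Upper bound: every $M\in\cal U$ meets each member of $\cal N$, so $y_0\in\widetilde M$, and fine continuity of $G_U(x,\cdot)$ at $y_0$ gives $\inf_{y\in M}G_U(x,y)\le G_U(x,y_0)$ for every $x\in U$ (trivially at $x=y_0$, where the value is $+\infty$); hence $\widehat{\inf}_{y\in M}G_U(\cdot,y)\le\inf_{y\in M}G_U(\cdot,y)\le G_U(\cdot,y_0)$, so $s\le G_U(\cdot,y_0)$ and in particular $s\in\cal S(U)$. Lower bound: for $N\in\cal N$ we have $y_0\in N$, and fine lower semicontinuity of $G_U(x,\cdot)$ gives $\sup_{N\in\cal N}\inf_{y\in N}G_U(x,y)=G_U(x,y_0)$ for $x\ne y_0$; since $\widehat{\inf}_{y\in N}G_U(\cdot,y)$ and $\inf_{y\in N}G_U(\cdot,y)$ differ only on a polar set (fine fundamental convergence theorem, \cite{F1}), we obtain $s\ge\sup_{N\in\cal N}\widehat{\inf}_{y\in N}G_U(\cdot,y)=G_U(\cdot,y_0)$ quasi-everywhere. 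As $s$ and $G_U(\cdot,y_0)$ are finely hyperharmonic and agree q.e., they are equal, which proves (a).

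Part (b) is then immediate: $\Phi$ is continuous on $\cal S(U)$ for the natural topology, so $y\mapsto\Phi(G_U(\cdot,y))$ is finely continuous by (a); it is finite because $\Phi$ is real-valued on $\cal S(U)$, and strictly positive because $\Phi(u)>0$ for $u\ne0$ while $G_U(\cdot,y)\ne0$. For (c), by (b) the set $\{y\in U:\Phi(G_U(\cdot,y))>1/k\}$ is finely open, hence so is $V_k$; taking the balls $B_k$ increasing, each $y\in U$ lies in $V_k$ for all large $k$ (since $\Phi(G_U(\cdot,y))>0$ and $\Omega=\bigcup_kB_k$), so the $V_k$ cover $U$. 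Finally $V_k\subseteq\{y\in U:\Phi(G_U(\cdot,y))\ge1/k\}\cap\overline B_k=A_{kk}$, and $A_{kk}$ was shown in the proof of Proposition~\ref{prop3.6} to be compact in $\overline U$ and contained in $U$; thus the natural closure of $V_k$ in $\overline U$ is a closed subset of $A_{kk}\subset U$ and is compact, i.e.\ $V_k$ is relatively naturally compact in $U$.

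The main obstacle is (a). The delicate points there are that the fine topology need not be first countable, so one must argue with the fine neighbourhood filter and arbitrary ultrafilters refining it rather than with sequences, and that one must trade the regularization $\widehat{\inf}$ for the plain $\inf$ — which costs only a polar set and is then absorbed by the uniqueness of finely hyperharmonic functions coinciding q.e. Everything after (a) is routine bookkeeping.
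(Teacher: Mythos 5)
Your overall route coincides with the paper's: for (a) you use the natural compactness of $\cal S(U)\cup\{+\infty\}$ (Theorem \ref{thm2.9}), identify the natural limit along the converging ultrafilter with the regularized $\liminf$, and pin it down to $G_U(\cdot,y_0)$ via symmetry and fine continuity of $G_U(x,\cdot)$, first q.e.\ and then everywhere; your (b) and (c) are exactly the paper's arguments (composition with the naturally continuous form $\Phi$, and $V_k\subset A_{kk}$ with $A_{kk}$ naturally compact in $U$). Working with ultrafilters refining the fine neighbourhood filter rather than with nets is immaterial, and your upper bound $s\le G_U(\cdot,y_0)$ is correct.

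The gap is in the lower bound, at the very step you dismiss as costing ``only a polar set'': the asserted identity $\sup_{N\in\cal N}\widehat{\inf}_{y\in N}G_U(\cdot,y)=G_U(\cdot,y_0)$ q.e.\ does not follow from the fact that, for each fixed $N$, $\widehat{\inf}_{y\in N}G_U(\cdot,y)$ and $\inf_{y\in N}G_U(\cdot,y)$ differ only on a polar set. The filter $\cal N$ is uncountable, and necessarily so: in $\RR^n$, $n\ge2$, countable sets are polar, so no point has a countable fine neighbourhood base (there are not even nontrivial finely convergent sequences); hence the exceptional polar sets, one for each $N\in\cal N$, cannot be amalgamated into a single polar set. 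Nor does the usual countable reduction (\cite[Remark p.\ 91]{F1}, quasi-Lindel\"of) repair this: a countable subfamily $(N_j)$ with $\sup_j\widehat{\inf}_{N_j}=\sup_{N\in\cal N}\widehat{\inf}_N$ is no longer a neighbourhood base at $y_0$, so the unregularized identity $\sup_j\inf_{y\in N_j}G_U(x,y)=G_U(x,y_0)$, on which your computation rests, is lost. So the inequality $s\ge G_U(\cdot,y_0)$ q.e.\ — which is where the whole content of (a) sits, the opposite inequality being easy (e.g.\ via Corollary \ref{cor2.12}) — is not established by your argument. For comparison, the paper proceeds by testing fine convergence of a net $(y_\alpha)\to y$ against every $s\in\cal S(U)$, in particular $s=G_U(x,\cdot)$, so that $G_U(x,y_\alpha)\to G_U(x,y)$ as a genuine limit at every $x\in U$, and then identifies the natural limit $\lim\widehat{\inf}_\alpha G_U(\cdot,y_\alpha)$ with $G_U(\cdot,y)$ q.e.; if you rewrite your proof in that form you should still isolate and justify that q.e.\ identification, rather than treat the passage from $\inf$ to $\widehat{\inf}$ over an uncountable family as routine bookkeeping.
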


\begin{proof} (a) Consider a net $(y_\alpha)_{\alpha\in I}$ on $U$ converging finely
to some $y\in U$, in other words $s(y_\alpha)\to s(y)$ for every
$s\in\cal S(U)$. Taking $s=G_U(x,.)$ with $x\in U$ we have in particular
$G_U(x,y_\alpha)\to G_U(x,y)$ for every $x\in U$. Since
$G_U(.,y_\alpha)\in\cal S(U)\cup\{+\infty\}$, which is naturally compact by
Theorem \ref{thm2.9}, we may assume that $G_U(.,y_\alpha)\to z$ naturally for
some $z\in\cal S(U)\cup\{+\infty\}$. Furthermore, we then have
$$z=\underset{\alpha}{\lim\widehat{\inf}}\,G_U(.,y_\alpha)=G_U(.,y),$$
where the latter equality holds first q.e.,\ and next everywhere on $U$ by fine
continuity of $z$ and
$G_U(.,y)$. Thus $G_U(.,y_\alpha)\to G_U(.,y)$, which proves assertion (a).

(b) It follows from (a) that this function is finely continuous on $U$ because
the mapping $\Phi:\cal S(U)\to[0,+\infty\,[$
is (naturally) continuous, as recalled above.

(c) According to (b), each $V_k$ is finely open along with $U\cap B_k$.
By the proof of Proposition \ref{prop3.6} we have
$V_k\subset A_{kk}\subset U$ with $A_{kk}$ naturally compact.
Clearly, $\bigcup_kV_k=U$.
\end{proof}

\begin{cor}\label{cor4.3} The mapping
$\phi:U\ni y\longmapsto P_y=K(.,y)\in\cal S(U)$
is continuous from $U$ with the fine topology to
$\cal S(U)\cup\{+\infty\}$ with the natural topology.
In other words, the fine topology on $U$ is finer than the topology on $U$
induced by the natural topology on $\overline U$.
\end{cor}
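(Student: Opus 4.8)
The plan is to deduce this directly from the two continuity statements of Lemma~\ref{lemma4.2} together with the elementary fact that positive scalar multiplication is continuous in the natural topology. Since the natural topology on $\cal S(U)$ is metrizable, it suffices to test continuity of $\phi$ along sequences (nets would do equally well). So let $(y_k)$ be a sequence in $U$ converging finely to $y\in U$. By Lemma~\ref{lemma4.2}(a), $G_U(.,y_k)\to G_U(.,y)$ in $\cal S(U)$ endowed with the natural topology; by Lemma~\ref{lemma4.2}(b), the numbers $\lambda_k:=\Phi(G_U(.,y_k))$ converge to $\lambda:=\Phi(G_U(.,y))$, and this limit is strictly positive, so the reciprocals $\lambda_k^{-1}$ converge to $\lambda^{-1}$ in $]0,+\infty[$.

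Next I would invoke that the natural topology on $\cal S(U)$ is the topology induced by the weak topology of a locally convex topological vector space $E$ in which the cone $\cal S(U)$ sits. In any topological vector space scalar multiplication $\RR\times E\to E$ is jointly continuous; explicitly, for every continuous linear form $\ell$ on $E$ one has $\ell(\lambda_k^{-1}G_U(.,y_k))=\lambda_k^{-1}\ell(G_U(.,y_k))\to\lambda^{-1}\ell(G_U(.,y))$, since the scalars $\lambda_k^{-1}$ are bounded and $\ell(G_U(.,y_k))\to\ell(G_U(.,y))$. Hence
$$\phi(y_k)=P_{y_k}=\frac{G_U(.,y_k)}{\Phi(G_U(.,y_k))}\longrightarrow\frac{G_U(.,y)}{\Phi(G_U(.,y))}=P_y=\phi(y)$$
in $\cal S(U)$ with the natural topology. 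As all the $P_{y_k}$ and $P_y$ belong to the compact base $B\subset\cal S(U)$, this is a fortiori convergence in $\cal S(U)\cup\{+\infty\}$, so $\phi$ is continuous from $U$ with the fine topology into $\cal S(U)\cup\{+\infty\}$ with the natural topology.

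The final sentence of the corollary is then just a reformulation: under the identification of $y\in U$ with $P_y=\phi(y)\in\overline U$, the map $\phi$ is the canonical inclusion $U\hookrightarrow\overline U$, so its fine-to-natural continuity says exactly that the fine topology on $U$ is finer than the topology induced on $U$ by the natural topology of $\overline U$. I do not expect any real obstacle here: the only points requiring a word of care are the joint continuity of scalar multiplication in the natural topology and the use of Lemma~\ref{lemma4.2}(b) to know that the normalizing factor $\lambda$ stays bounded away from $0$, so that dividing by it is harmless.
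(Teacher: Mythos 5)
Your argument is correct in substance and starts from the same ingredients as the paper, namely parts (a) and (b) of Lemma~\ref{lemma4.2}; the difference lies in how the division by the normalizing constant is handled. The paper takes a net $(y_\alpha)$ converging finely to $y$ and computes $\lim\widehat{\inf}_\alpha P_{y_\alpha}=G_U(.,y)/\Phi(G_U(.,y))=P_y$, first quasi-everywhere and then everywhere by fine continuity, the identification of this regularized lower limit with the natural limit resting on Theorem~\ref{thm2.9} and the compactness of $\cal S(U)\cup\{+\infty\}$. You instead invoke the description of the natural topology as the trace of the weak topology of a locally convex space containing $\cal S(U)$ (stated in the paper after Theorem~\ref{thm2.6}) and check convergence against continuous linear forms, using that the scalars $\Phi(G_U(.,y_\alpha))^{-1}$ converge to a finite positive limit by Lemma~\ref{lemma4.2}(b); this is a legitimate alternative and arguably more elementary, since it bypasses the $\lim\widehat{\inf}$ machinery. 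One point needs repair: your reduction to sequences is not justified by metrizability of the \emph{target} --- continuity of a map is equivalent to sequential continuity only when the \emph{domain} is first countable, and $U$ with the fine topology is not (no point has a countable base of fine neighborhoods), so a sequentially continuous map from $U$ need not be continuous. Fortunately your parenthetical remark is the correct fix: run the identical computation for an arbitrary net $(y_\alpha)$ converging finely to $y$, exactly as the paper does, and every step (including the product $\lambda_\alpha^{-1}\ell(G_U(.,y_\alpha))\to\lambda^{-1}\ell(G_U(.,y))$ with bounded scalars) goes through unchanged.
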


\begin{proof} Recall that $P_y=G_U(.,y)/\Phi(G_U(.,y))$ for $y\in U$.
For any net $(y_\alpha)$ on $U$ converging finely to $y\in U$ we obtain from
(a) and (b) in the above lemma
$$\underset{\alpha}{\lim\widehat{\inf}}\,P_{y_\alpha}
=\frac{\lim\inf_\alpha G_U(.,y_\alpha)}{\Phi(G_U(.,y))}
=\frac{G_U(.,y))}{\Phi(G_U(.,y))}=P_y,
$$
first quasi-everywhere, and next everywhere in $U$ by fine continuity.
\end{proof}

\begin{theorem}\label{thm3.12}
Every extreme element of the base $B$ of the cone
${\cal S}(U)$ belongs to ${\overline U}$. In particular, any extreme invariant
function $h$ in $B$ has the form $h=K(.,Y)$ where $Y\in\Delta_1(U)$.
\end{theorem}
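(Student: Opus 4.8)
The plan is to show that every extreme point of $B$ lies in the closure $\overline U$ of $\varphi(U)$ by a barycentric/approximation argument, exploiting the compactness of $B$, the Choquet representation available on $B$, and the structure theorems already established. First I would recall that by the fine Riesz decomposition every extreme element of $B$ is either an extreme fine potential or an extreme invariant function. The extreme fine potentials are already known (by the integral representation of fine potentials cited before Proposition 2.8) to be exactly the functions $\alpha G_U(\cdot,y) = P_y \in \varphi(U) \subset \overline U$; so there is nothing to prove for them. Hence the whole content is to show that every extreme \emph{invariant} function $h \in B$ belongs to $\overline U$, and then Proposition 3.3 (together with Definition 3.4) immediately upgrades this to $h = K(\cdot,Y)$ with $Y \in \Delta_1(U)$.

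So fix an extreme invariant $h \in B$. The idea is to produce $h$ as a limit (in the natural topology, equivalently convergence in graph by Corollary 2.10) of elements $P_{y}$ with $y \in U$, or more precisely as the barycenter of a measure supported on $\overline{\varphi(U)} = \overline U$, and then invoke uniqueness in Choquet's theorem (Theorem 2.14) to force $h$ itself to sit in $\overline U$. Concretely, since $h$ is invariant and $h \in \mathcal S(U)$, by Theorem 4.4 of \cite{F4} there is a countable finely open cover $(W_j)$ of $U$ with $\widetilde W_j \subset U$ and $\widehat R_h^{U \setminus W_j} = h$ for each $j$. Pick a finite fine potential $p > 0$ on $U$ and approximate: the functions $h \wedge jp$ increase to $h$, and using Lemma 2.2 one can write $\widehat R_{h\wedge jp}^{A}$ for suitable $A \subset \overline A \subset U$ as an integral against $\varepsilon_x$-type swept measures carried by sets sitting inside $U$. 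Applying Proposition 3.10 to $u = h$ and $A$ a compact exhaustion piece $A_{kl}$ (from the proof of Proposition 3.6), the representing measure of $\widehat R_h^{A_{kl}}$ is carried by $\overline{A_{kl}} \subset U \subset \overline U$.

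The key step is then to let $A_{kl} \uparrow U$ and pass to the limit. Since $h$ is invariant, $\widehat R_h^{A_{kl}} \to h$ (this uses that $h$ is the specific supremum of its reductions onto the cover, via Theorem 4.4 of \cite{F4} and the band property of $\mathcal H_i(U)$ from Remark 2.18, together with Theorem 2.5(c) and Corollary 2.12 for the lower semicontinuity needed to control the limit). After normalizing by $\Phi$, the probability measures $\Phi(\widehat R_h^{A_{kl}})^{-1}\mu_{kl}$ (with $\mu_{kl}$ carried by $\overline{A_{kl}} \subset U$) have a weak-$*$ cluster point $\nu$ on the compact set $B$, and since the $\overline{A_{kl}}$ exhaust $U$ and sit inside $\overline U$, the limit measure $\nu$ is carried by $\overline U$. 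By continuity of $\Phi$ and of evaluation against l.s.c.\ affine forms (the paragraph before Theorem 2.14), $h$ is the barycenter of $\nu$; that is, $h(x) = \int_{\overline U} K(x,Y)\, d\nu(Y)$.

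The main obstacle is the last step: concluding from ``$h$ is the barycenter of a probability measure $\nu$ carried by $\overline U$'' that $h \in \overline U$. For this I would use that $h$ is \emph{extreme} in $B$: an extreme point of a compact convex set which is the barycenter of a probability measure $\nu$ must satisfy $\nu = \varepsilon_h$, hence $h$ belongs to the support of $\nu$, which is contained in the closed set $\overline U$; therefore $h \in \overline U$. One subtlety to handle carefully is that $\overline U$ need not be naturally closed in $B$ a priori --- but it is, being by definition the closure of $\varphi(U)$ in $B$ with the natural topology --- so the support containment genuinely gives $h \in \overline U$. Finally, combining $h \in \Delta(U) = \overline U \setminus U$ (it cannot lie in $\varphi(U) = \Ext_p(B)$ since it is invariant and nonzero, by the reasoning in Proposition 3.3) with the extremality of $h$, Definition 3.4 gives $h = K(\cdot,Y)$ for some $Y \in \Delta_1(U)$, completing the proof. $\qed$
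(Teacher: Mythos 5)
Your proposal is correct and follows essentially the same route as the paper's proof: Riesz decomposition to reduce to the extreme invariant case, exhaustion of $U$ by the compacta $A_{kl}$ from Proposition \ref{prop3.6} together with Proposition \ref{prop3.10} to localize the representing measures of the reductions $\widehat R{}_h^{A_{kl}}$ inside $U$, passage to a limit measure on $\overline U$ whose barycenter is $h$, and extremality to force that measure to be a point mass, whence $h\in\overline U$ and then $h=K(\cdot,Y)$ with $Y\in\Delta_1(U)$. The only differences (normalizing to probability measures and taking a weak-$*$ cluster point instead of the paper's specifically increasing sequence of measures converging vaguely) are cosmetic.
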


\begin{proof}
Let $p$ be an extreme element of $B$. By  Riesz decomposition, either
$p$ is the fine potential of a measure supported by a single point $y\in U$, or else $p$ is an invariant function on $U$.
In the former case we have $p=P_y$ and hence $p\in U$. In the latter case it follows by the proof of Proposition
\ref{prop3.6} that there exists an increasing sequence of compact  subsets
$(K_j)$ of ${\overline U}$ (of the form $A_{kl}$) such that
$\bigcup_jK_j=U$. For each $j$, ${\widehat R}{}_p^{K_j}$ is a fine potential
on $U$. In fact, by Proposition \ref{prop3.10}, the measure $\mu$
on $B$ carried by $\Ext(B)$ and representing ${\widehat R}{}_p^{K_j}$ is carried
by $K_j$, and hence
$${\widehat R}{}_p^{K_j}=\int P_yd\mu(y)=\int G_U(.,y)d\nu(y),
$$
is a fine potential on $U$, the measure $\nu$ on $B$ being well defined by
$d\nu(y)=\frac1{\Phi(G_U(.,y))}d\mu(y)$, cf.\ Corollary \ref{cor4.3} (b).
For any $j$ there exists a Radon measure $\mu_j$ on $B$ such that
${\widehat R}{}_p^{K_j}=\int q\,d\mu_j(q)$. The measure $\mu_j$ is carried by
${\overline U}$. Because $p$ is invariant it follows by Lemma \ref{lemma2.1}
that the sequence ${\widehat R}{}_p^{K_j}$ increases specifically to $p$ as
$j\to\infty$, and the sequence $(\mu_j)$ is therefore increasing. Consequently,
$\int d\mu_j=\Phi({\widehat R}{}_p^{K_j})\to \Phi(p)$
as $j\to\infty$, and the sequence $(\mu_j)$ converges vaguely to a measure
$\mu$ on $\overline U$. It follows that
$p=\int_{\overline U}q\,d\mu(q)$, and since $p$ is extreme we conclude that
$p\in\overline U$ because $\mu$ must be carried by a single point.
\end{proof}

\begin{cor}\label{cor3.13}
$\Ext(B)=U\cup\Delta_1(U)$.
\end{cor}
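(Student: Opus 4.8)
The plan is to combine Theorem~\ref{thm3.12} with Proposition~\ref{prop3.3} and the identification of $U$ with $\varphi(U)$ inside $B$. The statement $\Ext(B)=U\cup\Delta_1(U)$ asserts equality of two subsets of $B$, so I would prove the two inclusions separately, keeping in mind the standing identification of $y\in U$ with $P_y=\varphi(y)\in B$ and of $Y\in\Delta(U)$ with $K(.,Y)\in B$.

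For the inclusion $\Ext(B)\subset U\cup\Delta_1(U)$: let $p\in\Ext(B)$. By the fine Riesz decomposition theorem (already invoked just before Proposition~\ref{prop2.8}), $p$ is either an extreme fine potential or an extreme invariant function. If $p$ is an extreme fine potential, then by the integral representation of fine potentials (Theorem~\ref{thm2.15}, or the cited \cite{F5}) it has the form $\alpha G_U(.,y)$ with $\alpha>0$, $y\in U$; since $\Phi(p)=1$ this forces $\alpha=1/\Phi(G_U(.,y))$, i.e.\ $p=P_y$, so $p\in U$ under our identification. If $p$ is an extreme invariant function, Theorem~\ref{thm3.12} gives $p\in\overline U$; since $p$ is invariant and hence not of the form $P_y$ (whose fine potential part $G_U(.,y)$ is non-zero), $p\notin U$, so $p\in\Delta(U)$; and being extreme, $p=K(.,Y)$ with $Y$ minimal, i.e.\ $Y\in\Delta_1(U)$.

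For the reverse inclusion $U\cup\Delta_1(U)\subset\Ext(B)$: if $y\in U$, then $P_y=G_U(.,y)/\Phi(G_U(.,y))$, and $G_U(.,y)$ generates an extreme ray of $\cal S(U)$ because the measure $\varepsilon_y$ representing it is extreme among representing measures — concretely, $G_U(.,y)$ is a minimal fine potential (any fine potential $q$ with $q\preccurlyeq G_U(.,y)$ must be a multiple of $G_U(.,y)$), so $P_y\in\Ext(B)$. If $Y\in\Delta_1(U)$, then by definition $K(.,Y)$ is minimal, hence lies on an extreme generator of $\cal S(U)$, and since $\Phi(K(.,Y))=1$ we get $K(.,Y)\in\Ext(B)$; the converse direction of Proposition~\ref{prop3.3} records exactly this.

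I expect the main (though modest) obstacle to be bookkeeping with the identifications: one must be careful that "$p\in U$" really means $p=P_y\in\varphi(U)\subset B$ and that "$p\in\Delta_1(U)$" means $p=K(.,Y)$ for $Y\in\Delta_1(U)$, so that $U\cup\Delta_1(U)$ is genuinely a subset of $B$ and the asserted equality is literally an equality of subsets of $B$. The substantive analytic content — that extreme invariant elements of $B$ lie in $\overline U$ — has already been discharged in Theorem~\ref{thm3.12}, so the corollary is essentially a matter of assembling Theorem~\ref{thm3.12}, Proposition~\ref{prop3.3}, and the integral representation of fine potentials, together with the disjointness of $U$ and $\Delta(U)$ noted in the proof of Proposition~\ref{prop3.3}.
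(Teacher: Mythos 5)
Your proposal is correct and follows essentially the same route as the paper: the inclusion $\Ext(B)\subset U\cup\Delta_1(U)$ via the Riesz decomposition of an extreme element into either an extreme fine potential (hence $P_y$, by the integral representation of fine potentials) or an extreme invariant function (hence $K(.,Y)$ with $Y\in\Delta_1(U)$, by Theorem~\ref{thm3.12} and Proposition~\ref{prop3.3}), with the reverse inclusion being the easy part the paper dismisses as evident. Your extra care with the identifications and with checking that $P_y$ and $K(.,Y)$ are indeed extreme only fills in details the paper leaves implicit.
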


\begin{proof}
Every extreme element of $B$ is either the fine potential of a measure
supported by a single point $y\in U$, hence of the form
$P_y$, or else a minimal invariant functions, hence of the form
$K(.,Y)$ with $Y\in \Delta_1(U)$, according to Proposition \ref{prop3.3}.
This establishes the inclusion $\Ext(B)\subset U\cup\Delta_1(U)$.
The opposite inclusion is evident.
\end{proof}

\begin{theorem}\label{thm3.14}
For any invariant function $h$ on $U$ there exists a unique Radon measure
$\mu$ on ${\overline U}$ carried by $\Delta_1(U)$ such that
$$h(x)=\int_{\Delta(U)} K(x,Y)d\mu(Y), \quad x\in U.$$
\end{theorem}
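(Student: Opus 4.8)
The plan is to obtain Theorem~\ref{thm3.14} directly from the abstract Choquet-type integral representation already available on the compact base $B$, namely Theorem~\ref{thm2.16}, after identifying the carrier $\Ext_i(B)$ of the representing measure with the minimal Martin boundary $\Delta_1(U)$. Indeed, the substantive work has already been done: by Theorem~\ref{thm3.12} (together with Proposition~\ref{prop3.3}) every extreme invariant element of $B$ is of the form $K(.,Y)$ with $Y\in\Delta_1(U)$, so that under the identification of a point $Y\in\overline U$ with the function $K(.,Y)\in B$ we have $\Ext_i(B)=\Delta_1(U)$; and by Corollary~\ref{cor3.7a} this set is a Borel (indeed $G_\delta$) subset of $\overline U$, which is itself a closed, hence compact metrizable, subspace of $B$. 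What then remains is the routine transport of a finite Borel measure between $B$ and $\overline U$.

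\emph{Existence.} An invariant function on $U$ belongs by definition to $\mathcal S(U)$, so Theorem~\ref{thm2.16} provides a Radon measure $\nu$ on $B$, carried by $\Ext_i(B)$, with $h(x)=\int_B k(x)\,d\nu(k)$ for all $x\in U$. By the identification above, $\nu$ is carried by $\Delta_1(U)\subset\overline U$; let $\mu$ be the finite Borel measure on $\overline U$ defined by $\mu(E)=\nu(E)$ for every Borel set $E\subset\overline U$. Then $\mu$ is a Radon measure on $\overline U$ carried by $\Delta_1(U)$, and $h(x)=\int_{\overline U}K(x,Y)\,d\mu(Y)$. Since $\mu$ is carried by $\Delta_1(U)\subset\Delta(U)$, this is the required representation $h(x)=\int_{\Delta(U)}K(x,Y)\,d\mu(Y)$.

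\emph{Uniqueness.} Suppose $\mu'$ is a Radon measure on $\overline U$ carried by $\Delta_1(U)$ with $h(x)=\int_{\Delta(U)}K(x,Y)\,d\mu'(Y)$ for all $x\in U$. Extending $\mu'$ by $0$ on $B\setminus\overline U$ yields a Radon measure $\nu'$ on $B$ carried by $\Delta_1(U)=\Ext_i(B)\subset\Ext(B)$ and satisfying $h(x)=\int_B k(x)\,d\nu'(k)$. By the uniqueness in Choquet's theorem, in the form of Theorem~\ref{thm2.14} (or the uniqueness clause of Theorem~\ref{thm2.16}), $\nu'=\nu$; restricting back to $\overline U$ gives $\mu'=\mu$.

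I do not anticipate a genuine obstacle: Theorem~\ref{thm3.14} is essentially the conjunction of Theorems~\ref{thm2.16} and~\ref{thm3.12}, reinterpreted on $\overline U$. The single point deserving explicit justification is that the Choquet representing measure of $h$ charges neither $U\subset\Ext(B)$ nor the non-minimal part of $\Delta(U)$ --- that is, it genuinely lives on the \emph{minimal} Martin boundary $\Delta_1(U)$ --- which is exactly what Theorem~\ref{thm2.16} guarantees, and which ultimately reflects that $h$ lies in the band $\mathcal H_i(U)$ orthogonal to the fine potentials $\mathcal P(U)$.
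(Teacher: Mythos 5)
Your proposal is correct and follows essentially the same route as the paper, which deduces Theorem~\ref{thm3.14} directly from the Choquet representation on the base $B$ for invariant functions (Theorem~\ref{thm2.16}; the paper's citation of Theorem~\ref{thm2.15} there is evidently a slip) together with the identification $\Ext(B)=U\cup\Delta_1(U)$ of Corollary~\ref{cor3.13}. You merely spell out the transport of the representing measure between $B$ and $\overline U$ and the uniqueness step, which the paper leaves implicit.
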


\begin{proof}
The theorem follows immediately from Theorem \ref{thm2.15}  and Corollaries
\ref{cor3.8} and \ref{cor3.13}.
\end{proof}

\begin{prop}\label{prop3.15} Let $u\in\cal S(U)$ and let $V$ be
a finely open Borel subset of $U$. Let $\mu$ be the measure on $B$
carried by $\Ext(B)$ and representing $u$. Then $\mu(V)=0$ if and only if
the restriction $u_{|V}$ is invariant.
\end{prop}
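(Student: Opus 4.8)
The plan is to recast the statement via the fine Riesz decomposition of $u$ and then to reduce it to identifying the measure occurring in the fine Riesz decomposition of the restriction $u_{|V}$ on $V$.

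First I would use $\Ext(B)=U\cup\Delta_1(U)$ (Corollary \ref{cor3.13}) to write $\mu=\mu_1+\mu_2$, with $\mu_1$ carried by $U$ and $\mu_2$ by $\Delta_1(U)$; since $V\subset U$ this gives $\mu(V)=\mu_1(V)$. By Theorem \ref{thm2.17}, $h:=\int_{\Delta_1(U)}s\,d\mu_2(s)$ is invariant and $p:=\int_Us\,d\mu_1(s)$ is a fine potential. Transporting $\mu_1$ to a Borel measure on $U$ via $\varphi^{-1}$ (Corollary \ref{cor3.8}) and dividing by the finely continuous strictly positive function $y\mapsto\Phi(G_U(.,y))$ (Lemma \ref{lemma4.2}(b)) yields a Borel measure $\nu$ on $U$ with $p=G_U\nu$ and with $\mu(V)=0\Leftrightarrow\nu(V)=0$. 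Thus $u=G_U\nu+h$, so $u_{|V}=(G_U\nu)_{|V}+h_{|V}$, and $h_{|V}$ is invariant by Theorem \ref{thm2.5}(a).

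The crux is the claim that the measure representing the fine-potential part of $u_{|V}$ (in its fine Riesz decomposition on $V$) is exactly $\nu_{|V}$; granting this, $u_{|V}$ is invariant if and only if that measure is $0$, i.e.\ $\nu_{|V}=0$, i.e.\ $\nu(V)=0$, i.e.\ $\mu(V)=0$. To prove the claim I would split $(G_U\nu)_{|V}=(G_U(\nu_{|V}))_{|V}+(G_U(\nu_{|U\setminus V}))_{|V}$. For $y\in V$ the difference $G_U(.,y)_{|V}-G_V(.,y)$ is nonnegative and finely harmonic on $V$ (the fine Green kernels for $V\subset U$ have the same Newtonian singularity at $y$ and are finely harmonic off $y$, so the difference extends finely harmonically over the polar point $y$; cf.\ \cite{F2}), so integrating against $\nu_{|V}$ gives $(G_U(\nu_{|V}))_{|V}=G_V(\nu_{|V})+(\text{an invariant function on }V)$, where $G_V(\nu_{|V})$ is a fine potential on $V$ (being dominated by $(G_U\nu)_{|V}=p_{|V}<+\infty$ q.e.) that vanishes only if $\nu_{|V}=0$, since $G_V(x,\cdot)>0$. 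Together with the invariance of $h_{|V}$ and of the term $(G_U(\nu_{|U\setminus V}))_{|V}$ (see below), and the fact that $\cal H_i(V)$ is a hereditary band, this gives the claim.

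The main obstacle is the assertion that \emph{whenever a measure $\lambda$ on $U$ is carried by $U\setminus V$, the restriction $(G_U\lambda)_{|V}$ is invariant on $V$}; this handles $(G_U(\nu_{|U\setminus V}))_{|V}$ above, and the same argument applied to the family $\{G_U(.,y)_{|V}-G_V(.,y):y\in V\}$ gives the invariance of their integral against $\nu_{|V}$ used in the previous paragraph. I would prove the assertion using the characterization of invariance from the proof of Theorem \ref{thm2.5}(b) (see \cite[Theorem 4.4]{F4}): fix a countable cover $(W_i)$ of $V$ by finely open sets with $\widetilde W_i\subset V$ and check $\widehat R{}^{V\setminus W_i}_{(G_U\lambda)_{|V}}=(G_U\lambda)_{|V}$ on $W_i$. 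By Lemma \ref{lemma2.2} the left-hand side equals $\int^*G_U\lambda\,d\eps_x^{\complement W_i}$ (complement relative to $\Omega$); since $\eps_x^{\complement W_i}$ is carried by $\partial_fW_i\subset\widetilde W_i\subset V$ while $\lambda$ is carried by $U\setminus V$, Tonelli's theorem and the fine mean value property of the functions $G_U(.,y)$ with $y\in U\setminus V$ (which are finely harmonic on $U\setminus\{y\}\supset\widetilde W_i$) collapse this to $\int_{U\setminus V}G_U(x,y)\,d\lambda(y)=G_U\lambda(x)$ for $x\in W_i$, as needed. Alternatively one may appeal to the balayage identity $G_U\lambda=G_V(\lambda_{|V})+\widehat R{}^{U\setminus V}_{G_U\lambda}$ on $V$ (here $\lambda_{|V}=0$) together with Lemma \ref{lemma2.3}.
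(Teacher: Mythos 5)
Your overall architecture is the paper's own: push the representing measure down to a measure $\nu$ on $U$ with $p=G_U\nu$, split $G_U\nu$ over $V$ into $G_V(\nu_{|V})$ plus a remainder that is invariant on $V$, add the invariant $h_{|V}$ (Theorem \ref{thm2.5}(a)), and finish with strict positivity of $G_V$ and the fact that a fine potential on $V$ specifically majorized by an invariant function vanishes. Your direct verification that $(G_U(\nu_{|U\setminus V}))_{|V}$ is invariant (mean-value property of $G_U(.,y)$, $y\notin V$, over a countable cover as in \cite[Theorem 4.4]{F4}, via Lemma \ref{lemma2.2} and Tonelli) is sound, since for $y\notin\widetilde W_i$ the swept measure of $\eps_y$ onto $\complement W_i$ is $\eps_y$ itself.

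The genuine gap is in your treatment of $(G_U(\nu_{|V}))_{|V}$. You assert that $G_U(.,y)_{|V}-G_V(.,y)$ is \emph{finely harmonic} on $V$ by a removable-singularity argument at $y$. In this setting that is not justified: by \cite[Lemma 2.6]{F4} this difference is the restriction to $V$ of $\widehat R{}_{G_U(.,y)}^{U\setminus V}$, and since $G_U(.,y)$ is unbounded, Lemma \ref{lemma2.3} only yields that it is \emph{invariant}; invariant functions need not be finely harmonic (this is precisely the Gardiner--Hansen phenomenon \cite{GH} recalled in the introduction), and the removable-singularity step would require control of the difference near $y$ that you have not established. Consequently ``the same argument applied to the family $\{G_U(.,y)_{|V}-G_V(.,y)\}$'' does not go through as written: the mean-value identity for these differences over a \emph{prescribed} cover $(W_i)$ is essentially the fine harmonicity in question (invariance only guarantees such an identity for \emph{some} cover, and here $y$ may lie in $\widetilde W_i$, so no harmonicity off $y$ is available). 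The repair is exactly the alternative you name in your last sentence, which is the paper's actual proof: use the balayage identity $G_U\lambda=G_V\lambda_{|V}+\widehat R{}_{G_U\lambda}^{U\setminus V}$ on $V$ from \cite[Lemma 2.6]{F4} at the level of the whole potential (or integrate the kernel identity via a Fubini interchange as in Lemma \ref{lemma3.21a}), and invoke Lemma \ref{lemma2.3} for invariance of the swept term on $V$; your endgame then coincides with the paper's argument in both directions.
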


\begin{proof} Write $u=p+h$ with $p\in\cal P(U)$ and $h$ invariant on $U$.
Let $\lambda$ and $\nu$ be the measures on $\Ext(B)$ representing $p$ and
$h$, respectively. Then $\mu=\lambda+\nu$ with $\nu(U)=0$ according to
Corollary \ref{cor3.13} or \ref{thm2.16} and Theorem \ref{thm3.14}.
Writing $\Phi(G_U\mu)=\alpha$ we have by \cite[Lemma 2.6]{F4}
$$\alpha p=G_U\lambda=G_V\lambda_{|V}+\widehat R{}_{G_U\lambda}^{U\setminus V}
\preccurlyeq\alpha u\quad\text{\rm{on }}V,$$
and hence $G_V\lambda_{|V}\preccurlyeq\alpha u$ on $V$. If $u_{|V}$ is
invariant the fine potential $G_V\lambda_{|V}$ must therefore be $0$,
whence $\lambda(V)=0$ and finally $\mu(V)=\lambda(V)+\nu(V)=0$ because
$\nu(V)\le\mu(V)=0$. Conversely, suppose that $\mu(V)=0$ and hence
$\lambda(V)=0$. In the above display
$\widehat R{}_{G_U\lambda}^{U\setminus V}$ is invariant on $V$ by Lemma
\ref{lemma2.3}, and so is therefore $p$. It follows that $u=p+h$ is
invariant on $V$, $h$ being invariant on $U$ and hence on $V$ by Theorem
\ref{thm2.5} (a).
\end{proof}

For any (positive) Borel measure $\mu$ on $\overline U$ define a function
$K\mu:U\longmapsto[0,+\infty]$ by
$$K\mu=\int K(.,Y)d\mu(Y),\quad x\in U.$$
This integral exists in view of Proposition \ref{prop3.1b} (i).

\begin{theorem}\label{thm3.21} {\rm{1.}} For any Borel measure $\mu$ on
$\overline U$, $K\mu$ is finely hyperharmonic on $U$, that
is, $K\mu\in{\cal S}(U)\cup\{+\infty\}$.

{\rm{2.}} Every function $u\in{\cal S}(U)$ has a unique integral
representation $u=K\mu$ in terms of a Borel measure $\mu$ on
$U\cup\Delta_1(U)$.
\end{theorem}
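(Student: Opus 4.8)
The plan is to prove part 1 first, then deduce part 2 from the material already established in Sections 2 and 3. For part 1, I would argue as follows. If $\mu$ is a finite measure carried by one of the compact sets $C_{kl}=\varphi(A_{kl})$ together with possibly a portion of $\Delta(U)$, then I can approximate $\mu$ by a sequence of finitely-supported measures converging vaguely to $\mu$; each finitely-supported $K\nu$ is a finite sum of functions $K(\cdot,Y)\in\cal S(U)$, hence lies in $\cal S(U)$, and by Theorem \ref{thm2.9} (the compactness of $\cal S(U)\cup\{+\infty\}$ in the natural topology together with the identification $\lim_{\cal F}=\lim\widehat{\inf}_{\cal F}$) the natural limit of these sums is again in $\cal S(U)\cup\{+\infty\}$; using lower semicontinuity of $K(x,\cdot)$ on $\overline U$ from Proposition \ref{prop3.1b} (i) one checks this natural limit is pointwise $\le K\mu$, and a Fatou-type argument (again via Proposition \ref{prop3.1b} (i) applied to the approximating sequence) gives the reverse inequality q.e., hence everywhere by fine continuity. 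Then for a general Borel measure $\mu$ on $\overline U$ I would write $\overline U$ as the increasing union of $C_{kl}\cup\Delta(U)$-type pieces — more precisely, since $U=\bigcup_{kl}A_{kl}$ with each $A_{kl}$ naturally compact (Corollary \ref{cor3.8}) and $\Delta(U)$ is a $G_\delta$ (Proposition \ref{prop3.6}), decompose $\mu=\sum_j\mu_j$ into a countable sum of such finite pieces; then $K\mu=\sup_N\sum_{j\le N}K\mu_j$ is an increasing supremum of functions in $\cal S(U)\cup\{+\infty\}$, hence finely hyperharmonic by \cite[\S12.9]{F1}.

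For part 2, existence: given $u\in\cal S(U)$, Theorem \ref{thm2.14} gives a unique Radon measure $\mu_0$ on $B$ carried by $\Ext(B)$ with $u(x)=\int_B v(x)\,d\mu_0(v)$; by Corollary \ref{cor3.13}, $\Ext(B)=U\cup\Delta_1(U)$, and transporting $\mu_0$ through the bijection $Y\mapsto K(\cdot,Y)$ of $\overline U$ onto $B$ yields a Borel measure $\mu$ on $U\cup\Delta_1(U)$ with $u=K\mu$, provided one checks that the integral $\int_B v(x)\,d\mu_0(v)$ really is $\int_{\overline U}K(x,Y)\,d\mu(Y)$ as an honest integral against the lower semicontinuous kernel — this is where I would invoke Remark \ref{remark3.1d} (measurability of $K$) and the fact, used throughout Section 2, that for fixed $x$ the affine form $v\mapsto v(x)$ is l.s.c.\ on $B$ so that barycentres compute $\int_B v(x)\,d\mu_0(v)$ correctly. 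Uniqueness: suppose $K\mu=K\mu'=u$ with both $\mu,\mu'$ carried by $U\cup\Delta_1(U)$. Pushing forward through $Y\mapsto K(\cdot,Y)$ gives two measures on $B$ carried by $\Ext(B)$ (using $U\cong\Ext_p(B)$ and $\Delta_1(U)\cong\Ext_i(B)$, Corollary \ref{cor3.13}) both representing $u$; the uniqueness clause of Choquet's theorem (Theorem \ref{thm2.14}) forces them to coincide, hence $\mu=\mu'$ since the pushforward map is a Borel isomorphism on the relevant sets by Corollary \ref{cor3.8}.

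The main obstacle I anticipate is the careful passage in part 1 from "$K\mu\in\cal S(U)$ for finitely supported $\mu$" to general finite $\mu$ supported in $C_{kl}\cup(\text{compact piece of }\Delta(U))$: one must simultaneously control the pointwise value of the natural limit of the discrete approximants and show it is not merely $\le K\mu$ but actually equals $K\mu$ q.e. The subtlety is that $K(x,\cdot)$ is only lower semicontinuous (not continuous) on $\overline U$, so vague convergence $\mu_{j_k}\to\mu$ gives only $\liminf_k K\mu_{j_k}\ge K\mu$ on the one hand while the $\lim\widehat{\inf}$ description of natural limits gives the opposite-direction control on the regularizations; reconciling these requires invoking, as in the proof of Proposition \ref{prop3.1b} (iii), the Brelot property to pass to a Euclidean-compact fine neighborhood on which the relevant truncations $K(\cdot,Y)\wedge c$ are genuinely continuous, together with Dini's theorem. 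An alternative, cleaner route — which I would adopt if the above gets technical — is to reduce everything to part 2: first prove part 2's existence directly from Choquet (as above), note that this already shows every $u\in\cal S(U)$ is of the form $K\mu$; then for an arbitrary Borel $\mu$ on $\overline U$ split $\mu=\mu_U+\mu_\Delta$ into its restrictions to $U$ and to $\Delta(U)$, observe $K\mu_U=\int_U G_U(\cdot,y)\,d\nu(y)$ is a fine potential or $+\infty$ by \cite[Lemma 2.3]{F4} and the standard theory of fine Green potentials (this is essentially \cite{F4},\cite{F5}), while $K\mu_\Delta$ is a supremum of invariant functions $\le$ suitable multiples of functions in $\cal S(U)$, hence in $\cal S(U)\cup\{+\infty\}$ by Theorem \ref{thm2.5} (c) and \cite[\S12.9]{F1}; adding, $K\mu=K\mu_U+K\mu_\Delta\in\cal S(U)\cup\{+\infty\}$.
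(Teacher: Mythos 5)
Your Part 2 is fine and is essentially the paper's argument (Choquet's theorem for the base $B$ together with $\Ext(B)=U\cup\Delta_1(U)$ from Corollary \ref{cor3.13}). The problem is Part 1, where both of your routes have a genuine gap, and neither reaches the point that the paper's proof is actually about. In the approximation route, vague convergence of finitely supported measures $\mu_j\to\mu$ combined with mere lower semicontinuity of $K(x,\cdot)$ gives only $\liminf_jK\mu_j(x)\ge K\mu(x)$; the natural limit $w=\lim\widehat\inf_jK\mu_j$ lies \emph{below} this, and you have no mechanism to force $w=K\mu$. Worse, even if you obtained $w=K\mu$ quasi-everywhere, that would not suffice: fine hyperharmonicity is a pointwise-everywhere property (fine l.s.c.\ plus the mean-value inequality at every point), and equality q.e.\ with some $w\in{\cal S}(U)\cup\{+\infty\}$ does not rule out that $K\mu$ differs from its finely l.s.c.\ regularization on a polar set. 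The crux of the theorem is precisely to show $\widehat{K\mu}=K\mu$ \emph{everywhere}, and your Brelot/Dini patch (borrowed from the proof of Proposition \ref{prop3.1b}(iii)) addresses joint semicontinuity of the kernel, not this identification.

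Your ``cleaner'' alternative has a more concrete error: for a Borel measure on $\overline U$ the boundary part $\mu_\Delta$ lives on all of $\Delta(U)$, and for $Y\in\Delta(U)\setminus\Delta_1(U)$ the function $K(\cdot,Y)$ is just an element of $B$ which may have a nonzero fine-potential component, so $K\mu_\Delta$ is not an integral of invariant functions; and in any case an integral against a measure is not an increasing supremum of members of ${\cal S}(U)$ to which Theorem \ref{thm2.5}(c) or \cite[\S12.9]{F1} apply --- turning the integral into something with the fine mean-value property is exactly the missing step. The paper does this differently: it uses the measurability of $K$ with respect to ${\cal B}^*(U)\times{\cal B}(\overline U)$ (Remark \ref{remark3.1d}) to apply Fubini with the swept measures $\eps_x^{\Omega\setminus V}$ (which do not charge polar sets), giving
$\int_UK\mu\,d\eps_x^{\Omega\setminus V}=\int_{\overline U}\widehat R{}_{K(\cdot,Y)}^{U\setminus V}(x)\,d\mu(Y)\le K\mu(x)$,
so $K\mu$ is nearly finely hyperharmonic; and then it lets $V$ run through the directed family of finely open sets with compact closure in $U$, using that $\widehat R{}_{K(\cdot,Y)}^{U\setminus V}$ increases to $K(\cdot,Y)$ along a suitable sequence and monotone convergence, to conclude $\widehat{K\mu}(x)=K\mu(x)$ at \emph{every} $x\in U$. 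Without this Fubini-plus-regularization argument (or an equivalent substitute), your proof of Part 1 does not go through.
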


\begin{proof} 1. The kernel $K$ is measurable with respect to the product
$\sigma$-algebra $\cal B^*(U)\times\cal B(\overline U)$ in view of the
conclusion in Remark \ref{remark3.1d}. It follows that the function $K\mu$ is
nearly Euclidean Borel measurable on $U$.  We begin by showing
that $K\mu$ is nearly finely hyperharmonic, cf.\ \cite[Definition 11.1]{F1}.
Let $V\subset U$ be finely open with $\widetilde V\subset U$. For any $x\in V$
the swept measure $\eps_x^{\Omega\setminus V}$ is carried by the fine boundary
$\partial_fV\subset U$ and does not charge any polar set. Hence
$\eps_x^{\Omega\setminus V}$ may be regarded as a measure on the $\sigma$-algebra
$\cal B^*(U)$ of all Euclidean nearly Borel subsets of $U$, cf.\ again Remark
\ref{remark3.1d}. Altogether, Fubini's theorem applies, and we obtain
for any $x\in U$:
\begin{eqnarray*} \widehat{R}{}_{K\mu}^{U\setminus V}(x)
&=&\int_UK\mu(y)d\eps_x^{\Omega\setminus V}(y)\\
&=&\int_{\overline U}\biggl(\int_UK(y,Y)d\eps_x^{\Omega\setminus V}(y)\biggr)d\mu(Y)\\
&\le&\int_{\overline U}K(x,Y)d\mu(Y)=K\mu(x),
\end{eqnarray*}
the inequality because $K(.,Y)\in\cal S(U)$, cf.\ \cite[Definition 8.1]{F1}.
Thus $K\mu$ is nearly finely hyperharmonic on $U$. To show that $K\mu$ is
actually finely hyperharmonic we shall prove that the finely l.s.c.\ envelope
$\widehat{K\mu}$ of $K\mu$ equals $K\mu$, cf.\ \cite[Lemma 11.2 and
Definition 8.4]{F1} according to which
$$\widehat{K\mu}(x)=\sup_{V\in\cal V}\,\int_UK\mu\,d\eps_x^{\Omega\setminus V},$$
where $\cal V$ denotes the lower directed family of all finely open sets
$V\subset U$ of Euclidean compact closure in $\Omega$ contained in $U$.
For each $V\in\cal V$ and $x\in V$ we have, again by Fubini in view of Remark
\ref{remark3.1d},
\begin{eqnarray*} \int_UK\mu(y)d\eps_x^{\Omega\setminus V}(y)
&=&\int_{\overline U}\biggl(\int_UK(y,Y)d\eps_x^{\Omega\setminus V}(y)\biggr)d\mu(Y)\\
&=&\int_{\overline U}\widehat{R}{}_{K(.,Y)}^{U\setminus V}(x)d\mu(Y).
\end{eqnarray*}
Taking supremum over all $V\in\cal V$ leads to $\widehat{K\mu}=K\mu$ as desired.
In fact, the increasing net of finely superharmonic functions
$\bigl(\widehat{R}{}_{K(.,Y)}^{U\setminus V}\bigr)_{V\in\cal V}$ admits an increasing
subsequence
with the same pointwise supremum, by \cite[Remark (p.\ 91)]{F1}, and this
supremum equals $\widehat{R}_{K(.,Y)}^{U\setminus\{x\}}=\widehat{R}_{K(.,Y)}^U=K(.,Y)$.
It follows that
\begin{eqnarray*} \int_UK\mu\,d\eps_x^{U\setminus V}
&=&\int_{\overline U}\widehat{R}_{K(.,Y)}^{U\setminus V}(x)d\mu(Y)\\
&\nearrow&\int_{\overline U}K(x,Y)d\mu(Y)=K\mu(x)
\end{eqnarray*}
according to \cite[Theorem 11.12]{F1}. We conclude that $\widehat{K\mu}=K\mu$,
and so $K\mu$ is indeed finely hyperharmonic on $U$.

2. As noted in \cite[Theorem 4.1]{El1} this follows immediately from
Choquet's integral representation theorem applied to the cone ${\cal S}(U)$ with the base $B$.
\end{proof}

\begin{lemma}\label{lemma3.21a} For any set $A\subset U$ and
any Radon measure $\mu$ on $\overline U$ we have
$\widehat R{}_{K\mu}^A=\int\widehat R{}_{K(.,Y)}^Ad\mu(Y)$.
\end{lemma}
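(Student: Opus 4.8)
The plan is to reduce both sides to the integral representation of reduced functions given in Lemma~\ref{lemma2.2} and then to invoke Tonelli's theorem.

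First I would normalise $A$. As in the proof of Lemma~\ref{lemma2.2}, the fine harmonic measure $\eps_x^{A\cup(\Omega\setminus U)}$, and the swept function $\widehat R{}_v^A$ of any $v\in{\cal S}(U)$, are left unchanged when $A$ is replaced by its base $b(A)\cap U$ relative to $U$. Since $K\mu$ is finely hyperharmonic by Theorem~\ref{thm3.21}, I would write $K\mu=\sup_n s_n$ with $(s_n)$ an increasing sequence in ${\cal S}(U)$ (take $s_n=K\mu$ on each fine component of $U$ on which $K\mu$ is finite, and $s_n=n\,w$, with $w$ a fixed strictly positive element of ${\cal S}(U)$, on any component where $K\mu\equiv+\infty$); then $\widehat R{}_{K\mu}^A=\sup_n\widehat R{}_{s_n}^A$ by the convergence property of swept functions used in the proof of Lemma~\ref{lemma2.3}, so the left-hand side is unchanged as well, and likewise each $\widehat R{}_{K(.,Y)}^A$ on the right. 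Hence I may assume $A=b(A)\cap U$, in which case $\widehat R{}_v^A=R{}_v^A$ for every $v\in{\cal S}(U)$.

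Now, for each $n$, Lemma~\ref{lemma2.2} gives $\widehat R{}_{s_n}^A(x)=\int_U s_n(y)\,d\eps_x^{A\cup(\Omega\setminus U)}(y)$ for all $x\in U$, and monotone convergence yields
$$\widehat R{}_{K\mu}^A(x)=\sup_n\widehat R{}_{s_n}^A(x)=\int_U K\mu(y)\,d\eps_x^{A\cup(\Omega\setminus U)}(y),\qquad x\in U.$$
Substituting $K\mu(y)=\int_{\overline U}K(y,Y)\,d\mu(Y)$ and applying Tonelli's theorem to the non-negative function $(y,Y)\mapsto K(y,Y)$ I obtain
$$\widehat R{}_{K\mu}^A(x)=\int_{\overline U}\Bigl(\int_U K(y,Y)\,d\eps_x^{A\cup(\Omega\setminus U)}(y)\Bigr)d\mu(Y)=\int_{\overline U}\widehat R{}_{K(.,Y)}^A(x)\,d\mu(Y),$$
the inner integral being re-identified by Lemma~\ref{lemma2.2} applied to $K(.,Y)\in{\cal S}(U)$. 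This is the asserted equality.

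The step I expect to be the real content is the use of Tonelli's theorem, which needs the joint measurability of $K$. What is required is that $K$ be measurable with respect to ${\cal B}^*(U)\times{\cal B}(\overline U)$ — $U$ carrying the fine topology — which is precisely the conclusion of Remark~\ref{remark3.1d}; together with the fact that $\eps_x^{A\cup(\Omega\setminus U)}$, restricted to $U$, is a finite measure on ${\cal B}^*(U)$, since it does not charge the polar sets, while $\mu$ is a finite Borel measure on the compact space $\overline U$. The remaining points — the reduction to $A=b(A)\cap U$, the approximation $K\mu=\sup_n s_n$ needed because $K\mu$ may a priori equal $+\infty$ somewhere, the convergence property of $\widehat R$, and the passage to the limit — are routine.
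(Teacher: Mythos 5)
Your proposal is correct and follows essentially the same route as the paper: reduce to $A=b(A)\cap U$, apply Lemma \ref{lemma2.2}, and interchange the integrals by Fubini--Tonelli using the measurability of $K$ with respect to ${\cal B}^*(U)\times{\cal B}(\overline U)$ from Remark \ref{remark3.1d}. Your extra monotone-approximation step (writing $K\mu=\sup_n s_n$, where note that $U$, being a fine domain, has only one fine component) is a harmless precaution for the case $K\mu\equiv+\infty$, which the paper handles by applying Lemma \ref{lemma2.2} directly to the finely hyperharmonic function $K\mu$.
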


\begin{proof} As in the proof of Lemma \ref{lemma2.2},
${\widehat R}{}_{K\mu}^A$ and ${\widehat R}{}_{K(.,Y)}^A$
remain unchanged when $A$ is replaced by $b(A)\cap U$, and we may therefore
assume that $A=b(A)\cap U$, whence ${\widehat R}{}_{K\mu}^A=R_{K\mu}^A$ and
${\widehat R}{}_{K(.,Y)}^A=R_{K(.,Y)}^A$.
As in the proof of Theorem \ref{thm3.21} the kernel $K$ on
$U\times\overline U$ is measurable with respect to
$\cal B^*(U)\times\cal B(\overline U)$. Furthermore, $K\mu$ is finely
hyperharmonic on $U$, and we have by Lemma \ref{lemma2.2} and Fubini
for any $x\in U$
\begin{eqnarray*}
\widehat R{}_{K\mu}^A(x)
&=& R_{K\mu}^A(x)=\int_U K\mu\,d\eps_x^{A\cup(\Omega\setminus U)}\\
&=& \int_{\overline U}d\mu(Y)\int_UK(.,Y)d\eps_x^{A\cup(\Omega\setminus U)}
=\int_{\overline U}R_{K(.,Y)}^A(x)d\mu(Y)\\
&=& \int_{\overline U}\widehat R{}_{K(.,Y)}^A(x)d\mu(Y).
\end{eqnarray*}
\end{proof}

\begin{cor}\label{cor3.18}
Let $\mu$ be a Borel measure on $\Delta_1(U)$ and let $h=K\mu$.
If \,$h$ is finite q.e.\ then $h\in\cal S(U)$ and $h$ is invariant.
\end{cor}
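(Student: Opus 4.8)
The plan is to combine Theorem \ref{thm3.21}, which tells us that $h=K\mu$ is finely hyperharmonic on $U$, with the hypothesis that $h<+\infty$ q.e., which by definition of finely superharmonic (recalled early in the introduction: a finely hyperharmonic function finite q.e.\ is finely superharmonic) immediately gives $h\in\cal S(U)$. So the only real content is the invariance of $h$. First I would reduce to the case where $\mu$ is a Radon measure: since $\mu$ is carried by $\Delta_1(U)\subset\overline U$ and $\overline U$ is compact metrizable, every Borel measure on it is Radon, so this is automatic and Lemma \ref{lemma3.21a} applies.

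To prove invariance I would show that the fine potential part of $h$ vanishes, equivalently that $h\preccurlyeq$ no nonzero fine potential, or more directly I would use the countable sheaf-type criterion from Theorem \ref{thm2.5}(b): it suffices to check that $h_{|U_j}$ is invariant for each member $U_j$ of a countable cover of $U$ by regular finely open subsets. Here the natural cover to use is the family $(V_k)$ from Lemma \ref{lemma4.2}(c), or rather the fine components of $V_k$, all regular. Actually the cleanest route is via Proposition \ref{prop3.15}: for a finely open Borel set $V\subset U$, the restriction $h_{|V}$ is invariant iff the representing measure charges $V$ with mass zero. So the key step is to identify the measure on $B$ carried by $\Ext(B)$ representing $h=K\mu$ and to see that it gives zero mass to every finely open $V\subset U$ — equivalently that it is carried by $\Delta_1(U)$.

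The identification goes as follows. By Theorem \ref{thm3.21}(2), $h$ has a unique representation $h=K\nu$ with $\nu$ a Borel measure on $U\cup\Delta_1(U)$. I want to show $\nu$ is carried by $\Delta_1(U)$, i.e.\ $\nu(U)=0$. For this I would apply Lemma \ref{lemma3.21a} with $A=U\setminus V$ for $V$ ranging over the regular finely open cover, or more simply: by Corollary \ref{cor3.13}, $\Ext(B)=U\cup\Delta_1(U)$, and pushing $\mu$ (a measure on $\Delta_1(U)$) forward under the identification $Y\mapsto K(.,Y)$ already exhibits a representing measure on $B$ carried by $\Ext(B)\cap\Delta_1(U)=\Delta_1(U)$; by the uniqueness in Choquet's theorem (Theorem \ref{thm2.14}) this must be $\nu$. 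Hence $\nu(U)=0$, so $\nu(V)=0$ for every finely open Borel $V\subset U$, and Proposition \ref{prop3.15} gives that $h_{|V}$ is invariant for each such $V$. Taking the cover from Lemma \ref{lemma4.2}(c) (refining to fine components to ensure regularity) and invoking Theorem \ref{thm2.5}(b) yields that $h$ is invariant on all of $U$.

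The only point requiring care — the main obstacle — is the passage from ``$h=K\mu$ with $\mu$ on $\Delta_1(U)$'' to ``the Choquet-representing measure $\nu$ of $h$ on $B$ is carried by $\Delta_1(U)$.'' One must be sure that $K\mu$, viewed as an element of $\cal S(U)$ once finiteness q.e.\ is known, genuinely has $\mu$ (transported to $B$) as its representing measure. This is exactly the content of the uniqueness part of Theorem \ref{thm3.21}(2) together with the fact that the transported measure is already supported on $\Ext(B)$; so the argument is short, but it hinges on having established $h\in\cal S(U)$ first, which is why the finiteness hypothesis is essential — without it $K\mu$ could be identically $+\infty$ on a fine component and the representation theory does not apply. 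Once invariance is secured on each $V$ of the cover, Theorem \ref{thm2.5}(b) closes the proof with no further difficulty.
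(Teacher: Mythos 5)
Your strategy is genuinely different from the paper's. The paper does not pass through the Choquet machinery at all: it verifies the invariance criterion of \cite[Theorem 4.4]{F4} directly, computing $\widehat R{}_h^{U\setminus V}(x)=\int_U h\,d\eps_x^{\Omega\setminus V}$ for a regular finely open $V$ with $\widetilde V\subset U$ via Lemma \ref{lemma2.2} and Fubini (using the measurability of $K$ from Remark \ref{remark3.1d}), and then uses that each $K(.,Y)$, $Y\in\Delta_1(U)$, is invariant, so that the inner swept functions reproduce $K(.,Y)$ and hence $\widehat R{}_h^{U\setminus V}=h$; the quoted theorem then gives invariance. Your route instead identifies $\mu$ with the (unique) representing measure of $h$ via Theorem \ref{thm3.21}(2), and then descends through Proposition \ref{prop3.15} and the countable sheaf property Theorem \ref{thm2.5}(b). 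That is a legitimate alternative (indeed, once $\mu$ is identified with the Choquet measure carried by $\Delta_1(U)\subset\Ext_i(B)$, Theorem \ref{thm2.17} would even finish in one step); what the paper's argument buys is independence from the uniqueness/barycenter machinery, working only at the level of reduites, which is why it needs the finiteness hypothesis only to justify Fubini.

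Two of your justifications, however, are wrong as stated and need repair. First, it is not true that every Borel measure on a compact metrizable space is Radon: an infinite measure is not, and $\mu$ is not assumed finite. Finiteness is what you actually need in order to transport $\mu$ to a Radon measure on $B$ and invoke uniqueness (Theorem \ref{thm2.14} or \ref{thm3.21}(2)); it does follow from your hypotheses, but by an argument you must supply, e.g.\ for fixed $x$ the function $K(x,\cdot)$ is l.s.c.\ and $>0$ on the compact set $\overline U$ (Proposition \ref{prop3.1b}(i)), hence bounded below by some $c(x)>0$, so $h(x)<+\infty$ at a single point already forces $\mu(\Delta_1(U))\le h(x)/c(x)<+\infty$. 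Second, passing to the fine components of the sets $V_k$ of Lemma \ref{lemma4.2}(c) does not ``ensure regularity'': a fine component of a finely open set need not be regular (remove a point from a ball), and Theorem \ref{thm2.5}(b) genuinely requires a cover by regular finely open sets. Replace each $V_k$ by $r(V_k)$, which is regular, Borel (a $K_\sigma$), contained in $U$ since $r(V_k)\subset r(U)=U$, and still covers $U$; or simply use the cover of $U$ by the regular finely open Borel sets $U\cap B$ with $B$ running through a countable family of Euclidean balls. With these two corrections your proof is sound.
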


\begin{proof} Let $V$ be a regular finely open set such that
$\widetilde V\subset U$.
For any $x\in V$ the measure $\eps_x^{\Omega\setminus V}$ is carried by
$\partial_fV\subset\widetilde V\subset U$ and does not
charge any polar set. This measure may therefore be regarded as a measure on
the $\sigma$-algebra $\cal B^*(U)$ of all nearly Borel subsets of $U$, cf.\
Remark \ref{remark3.1d}, where it is also shown that $K$ is measurable with
respect to the product $\sigma$-algebra $\cal B^*(U)\times\cal B(\overline U)$.
Supposing that $h<+\infty$ q.e.\ on $U$ we may therefore
apply Fubini's theorem and Lemma \ref{lemma2.2} to obtain
\begin{eqnarray*}
{\widehat R}{}_h^{U\setminus V}(x) &=& R_h^{U\setminus V}(x)
=\int_Uh(y)d\eps_x^{\Omega\setminus V}(y)\\
&=& \int_{\Delta_1(U)}\biggl(\int_UK(y,Y)d\eps_x^{\Omega\setminus V}(y)\biggr)d\mu(Y)\\
&=& \int_{\Delta_1(U)}R_{K(.,Y)}^{U\setminus V}(x)d\mu(Y)
=\int_{\Delta_1(U)}{\widehat R}{}_{K(.,Y)}^{U\setminus V}(x)d\mu(Y)\\
&=& \int_{\Delta_1(U)}K(x,Y)d\mu(Y)=h(x).
\end{eqnarray*}
In the remaining case $x\in U\setminus V$ the resulting equation
${\widehat R}{}_h^{U\setminus V}(x)=h(x)$ holds because $x$ belongs to
$U\setminus V$ which is a base relative to $U$. According to
\cite[Theorem 4.4]{F4} there is a countable cover of $U$ by sets like the
above set $V$, and since ${\widehat R}{}_h^{U\setminus V}(x)=h(x)$ for each such
set, we conclude from the quoted theorem that indeed $h$ is invariant.
\end{proof}

\begin{remark}\label{remark3.19}
The finiteness condition on $h$ in Corollary \ref{cor3.18} is equivalent with
$h\ne +\infty$, that is $h\in {\cal S}(U).$
\end{remark}

\begin{cor}\label{cor3.20}
For any finite measure $\mu$ on $\Delta_1(U)$ the function
$h=\int_{\Delta_1(U)}K(.,Y)d\mu(Y)$
is an invariant function.
\end{cor}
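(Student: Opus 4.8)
The plan is to reduce the statement to Corollary~\ref{cor3.18}, whose sole hypothesis is that $h=K\mu$ be finite quasi-everywhere on $U$. So the only point to establish is that a \emph{finite} measure $\mu$ on $\Delta_1(U)$ yields a function $h=K\mu$ that belongs to $\cal S(U)$, rather than merely to $\cal S(U)\cup\{+\infty\}$ as guaranteed in general by Theorem~\ref{thm3.21}(1).

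First I would dispose of the case $\mu=0$, where $h=0$ is (vacuously) invariant, and assume henceforth $\mu\ne0$, putting $\alpha=\mu(\Delta_1(U))\in\,]0,+\infty[\,$. Since $\Delta_1(U)$ is a $G_\delta$, in particular a Borel subset, of the compact metrizable base $B$ (Corollary~\ref{cor3.7a}, recalling that $\overline U$ is compact in $B$), the measure $\mu$ may legitimately be regarded as a Radon measure on $B$ carried by $\Delta_1(U)\subset\overline U\subset B$, so that $\frac1\alpha\mu$ is a probability measure on the compact convex set $B$.

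Next I would invoke the barycenter $s\in B$ of the probability measure $\frac1\alpha\mu$. By the discussion preceding Theorem~\ref{thm2.14}, for every l.s.c.\ affine form $\varphi:B\longrightarrow[0,+\infty]$ the barycenter satisfies $\varphi(s)=\frac1\alpha\int_B\varphi(u)\,d\mu(u)$; applied, for each fixed $x\in U$, to the non-negative l.s.c.\ affine form $u\longmapsto u(x)$ (Corollary~\ref{cor2.12}), this gives
$$
s(x)=\frac1\alpha\int_B u(x)\,d\mu(u)=\frac1\alpha\int_{\Delta_1(U)}K(x,Y)\,d\mu(Y)=\frac1\alpha\,h(x).
$$
Hence $h=\alpha s$ with $s\in B\subset\cal S(U)$, so $h\in\cal S(U)$ and in particular $h<+\infty$ quasi-everywhere on $U$. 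Corollary~\ref{cor3.18} (together with Remark~\ref{remark3.19}) then applies and yields that $h$ is invariant.

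This corollary is essentially immediate once the machinery of the preceding sections is available; the only delicate points — the ``main obstacle'', such as it is — are the measurability bookkeeping (that $\Delta_1(U)$ is Borel in $B$, so that $\mu$ is genuinely a measure on the base) and the fact that $u\mapsto u(x)$ is a bona fide non-negative l.s.c.\ affine form on $B$ to which the barycenter identity extends from continuous to l.s.c.\ affine forms. These are supplied, respectively, by Corollaries~\ref{cor3.7a} and~\ref{cor2.12} together with the barycenter discussion preceding Theorem~\ref{thm2.14}.
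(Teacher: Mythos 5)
Your proof is correct, and it shares the paper's central move: regard $\mu$ as a Radon measure on the compact base $B$ carried by $\Delta_1(U)$, normalize it, and identify $h$ (up to the factor $\alpha=\mu(\Delta_1(U))$) with the barycenter of the resulting probability measure, using the extension of the barycentric identity to the l.s.c.\ affine forms $u\mapsto u(x)$ from Corollary~\ref{cor2.12}. Where you diverge is in how invariance is then extracted. The paper stops at the barycenter: since the representing measure is carried by $\Delta_1(U)\subset\Ext_i(B)$ (Proposition~\ref{prop3.3}, Corollary~\ref{cor3.13}), invariance of the barycenter follows at once from the integral-representation results of Section~2 (Theorem~\ref{thm2.16}, or more precisely Theorem~\ref{thm2.17}). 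You instead use the barycenter only to secure that $h=\alpha s\in\cal S(U)$, hence $h<+\infty$ q.e., and then route through Corollary~\ref{cor3.18}, whose proof redoes the invariance criterion via the Fubini/sweeping computation and \cite[Theorem 4.4]{F4}. Both closings are legitimate and non-circular (Corollary~\ref{cor3.18} precedes the present statement); your detour is slightly longer than necessary --- once $h$ is a multiple of the barycenter of a measure carried by the extreme invariant elements, Theorem~\ref{thm2.17} already gives invariance --- but it has the merit of making explicit the finiteness point that the paper's one-line proof passes over in silence, and of isolating exactly what Corollary~\ref{cor3.18} needs.
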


\begin{proof}
Let $\nu$ be the measure on $B$ defined by $\nu(A)=\mu(\Delta_1(U)\cap A)$
for any Borel set $A\subset B$. Then $\nu$ is a finite mesure on $B$, and we may suppose that $|\nu|=1$. Let $h\in B$ be the barycenter of $\nu$.
Then $k=\int_{\Delta_1(U)}K(.,Y)d\mu(Y)=h$, and hence $h$ is
invariant according to Theorem \ref{thm2.15} and Corollary \ref{cor3.13}.
\end{proof}

\begin{cor}\label{cor3.22} Let $\mu$ be a Borel measure on
$U\cup\Delta_1(U)$. A function $u=K\mu\not\equiv+\infty$ is
a fine potential, resp.\ an invariant function, if and only $\mu$ is
carried by $U$, resp.\ by $\Delta_1(U)$.
\end{cor}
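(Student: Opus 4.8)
\textbf{Proof proposal for Corollary \ref{cor3.22}.}

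The plan is to decompose the measure $\mu$ according to the partition $U\cup\Delta_1(U)$ and apply the results already established about the two pieces. Write $\mu=\mu_0+\mu_1$ where $\mu_0$ is the restriction of $\mu$ to $U$ and $\mu_1$ is the restriction of $\mu$ to $\Delta_1(U)$; this is legitimate since $U$ is a Borel (indeed $K_\sigma$) subset of $\overline U$ and $\Delta_1(U)$ is a $G_\delta$ by Corollary \ref{cor3.7a}. Then $u=K\mu=K\mu_0+K\mu_1$. First I would observe that $K\mu_0=\int_U K(.,y)\,d\mu_0(y)=\int_U G_U(.,y)\,d\nu_0(y)$ for the Borel measure $\nu_0$ on $U$ with $d\nu_0(y)=\Phi(G_U(.,y))^{-1}d\mu_0(y)$ (finite continuity of $y\mapsto\Phi(G_U(.,y))$ from Lemma \ref{lemma4.2}(b) makes this a genuine Borel measure), so $K\mu_0$ is a fine potential whenever it is not identically $+\infty$, by the integral representation of fine potentials. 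On the other side, by Corollary \ref{cor3.18} (together with Remark \ref{remark3.19}), $K\mu_1$ is an invariant function whenever it is $\not\equiv+\infty$.

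The forward implications are then immediate: if $\mu$ is carried by $U$ then $\mu_1=0$ and $u=K\mu_0$ is a fine potential; if $\mu$ is carried by $\Delta_1(U)$ then $\mu_0=0$ and $u=K\mu_1$ is invariant. For the converse I would argue by the uniqueness of the Riesz decomposition together with the uniqueness in Theorem \ref{thm3.21}(2). Indeed, since $u=K\mu\not\equiv+\infty$ and $u=K\mu_0+K\mu_1$ with both summands in $\cal S(U)\cup\{+\infty\}$, both $K\mu_0$ and $K\mu_1$ are finite q.e.\ and hence lie in $\cal S(U)$; thus $u=K\mu_0+K\mu_1$ is the Riesz decomposition of $u$ into a fine potential and an invariant function, which is unique. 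So if $u$ is a fine potential then its invariant part $K\mu_1=0$; representing the zero function by Theorem \ref{thm3.21}(2) and invoking uniqueness of that representation (the measure $\mu_1$ being carried by $\Delta_1(U)\subset U\cup\Delta_1(U)$) forces $\mu_1=0$, i.e.\ $\mu$ is carried by $U$. Symmetrically, if $u$ is invariant then $K\mu_0=0$, and since $K\mu_0=G_U\nu_0$ with $\nu_0$ carried by $U$, the uniqueness of the integral representation of fine potentials (or again Theorem \ref{thm3.21}(2)) gives $\nu_0=0$, hence $\mu_0=0$.

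The main obstacle I anticipate is bookkeeping around the possibility that one of the partial integrals is $+\infty$ on a large set: a priori $K\mu_0$ or $K\mu_1$ could be $\equiv+\infty$ even though their sum is finite somewhere, which cannot happen, but one must say why. The clean way is to note that $K\mu=K\mu_0+K\mu_1$ with $K\mu_0,K\mu_1\in\cal S(U)\cup\{+\infty\}$ by Theorem \ref{thm3.21}(1), and a sum of two finely hyperharmonic functions $\ge0$ is finite q.e.\ precisely when each summand is; since $u\not\equiv+\infty$ is finely superharmonic it is finite q.e., forcing each of $K\mu_0,K\mu_1$ to be finite q.e.\ and hence in $\cal S(U)$. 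Once that point is secured the rest is a routine application of the uniqueness statements already in hand.
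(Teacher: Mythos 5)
Your proposal is correct and rests on exactly the ingredients the paper uses: rewriting the part of $K\mu$ coming from $U$ as a Green potential $G_U\nu_0$ (hence a fine potential), Corollary \ref{cor3.18} for the part carried by $\Delta_1(U)$, and a uniqueness argument, with the q.e.-finiteness of each summand correctly secured from $u\not\equiv+\infty$. The only difference is organizational: you split the measure $\mu$ into its restrictions to $U$ and $\Delta_1(U)$ and conclude via uniqueness of the Riesz decomposition (plus strict positivity of the kernels), whereas the paper splits $u$ into its Riesz parts $p+h$, represents these by measures $\nu$ on $U$ and $\lambda$ on $\Delta_1(U)$, and identifies $\mu=\nu+\lambda$ by the uniqueness in Theorem \ref{thm3.21} -- essentially the same argument read in the opposite direction.
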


\begin{proof} For $y\in U$ write $\alpha(y):=\Phi(G_U(.,y))$.
If $\mu$ is carried by $U$ then $K\mu(y)=
G_U({\alpha(y)}^{-1}\mu)$ is a fine potential on $U$. Conversely, if $u$ is
a fine potential on $U$ then there is a measure $\nu$ on $U$ such
that $u(y)=G_U\nu(y)= K(\alpha(y)\nu)$. On the other hand, if $\mu$ is
carried by $\Delta_1(U)$ then $u=p+h$ with $p=K\mu$ for some $\mu$
on $U$ and $h=K\lambda$ for some $\lambda$ carried by $\Delta_1(U)$. By
uniqueness in Theorem \ref{thm3.21}, $\mu=\nu+\lambda$, where $\nu$ is
carried by $U$ and hence $\nu=0$, so that $K\mu=K\lambda$ is invariant
according  to Corollary \ref{cor3.18}. Conversely, if $K\mu$ is invariant
then $u=p+h=K\nu+K\lambda$ as above, and here $h$ is invariant according to
Corollary \ref{cor3.20}. It follows that $p=0$, that is $\nu=0$, and so
$\mu=\lambda$ is carried by $\Delta_1(U)$.
\end{proof}

\begin{cor}\label{cor3.11}
Let $u\in {\cal S}(U)$ and $A\subset {\overline A}\subset U$, where
${\overline A}$
denotes the closure of \,$A$ in
${\overline U}$. Then
${\widehat R}^A_u$
is a fine potential on $U$.
\end{cor}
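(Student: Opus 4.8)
The plan is to combine Proposition \ref{prop3.10} with the characterization in Corollary \ref{cor3.22} (or equivalently with Theorem \ref{thm2.15}). First I would invoke Proposition \ref{prop3.10}: since $A\subset\overline A\subset U$, the Radon measure $\mu$ on $B$ carried by $\Ext(B)$ and representing $\widehat R{}^A_u$ is carried by $\overline A$. The key point is then that $\overline A\subset U$, and by Corollary \ref{cor3.13} we have $\Ext(B)=U\cup\Delta_1(U)$, so $\overline A\cap\Ext(B)\subset U=\Ext_p(B)$; that is, the representing measure $\mu$ is actually carried by the set $\Ext_p(B)$ of extreme fine potentials.

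From there the conclusion is immediate. By Corollary \ref{cor3.22} (with $\mu$ carried by $U$, after transporting $\mu$ from $B$ to $U$ via the homeomorphism $\varphi$ of Corollary \ref{cor4.3}, exactly as in the proof of Theorem \ref{thm3.12}), the function $\widehat R{}^A_u=\int_{\overline A}q\,d\mu(q)$ is a fine potential on $U$. Alternatively one can cite Theorem \ref{thm2.15} directly: a Radon measure on $B$ carried by $\Ext_p(B)$ represents a fine potential. Either route gives that $\widehat R{}^A_u\in\cal P(U)$.

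I do not expect any real obstacle here; this corollary is essentially a bookkeeping consequence of results already established. The only point requiring a word of care is the passage between the representing measure on the base $B$ and a measure on $U$ itself: one uses that $\varphi\colon y\mapsto P_y$ is a homeomorphism of each $A_{kl}$ onto its image (Corollary \ref{cor3.8}) together with $d\nu(y)=\Phi(G_U(.,y))^{-1}\,d\mu(\varphi(y))$ being a well-defined Radon measure on $U$, so that $\widehat R{}^A_u=\int G_U(.,y)\,d\nu(y)=G_U\nu$ is a fine potential. One should also note that $\widehat R{}^A_u\not\equiv+\infty$ since $\widehat R{}^A_u\le u\in\cal S(U)$, so the hypothesis of Corollary \ref{cor3.22} is met.

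\begin{proof}
By Proposition \ref{prop3.10} the Radon measure $\mu$ on $B$ carried by the
extreme points $\Ext(B)$ and representing ${\widehat R}^A_u$ is carried by
${\overline A}$. Since ${\overline A}\subset U$ and, by Corollary
\ref{cor3.13}, $\Ext(B)=U\cup\Delta_1(U)$ with $U=\Ext_p(B)$, it follows that
$\mu$ is carried by ${\overline A}\subset U=\Ext_p(B)$. In particular, by
transporting $\mu$ to $U$ via the homeomorphisms of Corollary \ref{cor3.8}
(as in the proof of Theorem \ref{thm3.12}), there is a Radon measure $\nu$ on
$U$ with $d\nu(y)=\Phi(G_U(.,y))^{-1}\,d\mu(P_y)$ such that
${\widehat R}^A_u=\int_{\overline A}P_y\,d\mu(P_y)=\int_UG_U(.,y)\,d\nu(y)
=G_U\nu$ on $U$. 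Moreover ${\widehat R}^A_u\le u\in\cal S(U)$, so
${\widehat R}^A_u\not\equiv+\infty$. Hence, by Corollary \ref{cor3.22} (or
directly by Theorem \ref{thm2.15}), ${\widehat R}^A_u$ is a fine potential
on $U$.
\end{proof}
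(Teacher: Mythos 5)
Your argument is correct and follows essentially the same route as the paper: Proposition \ref{prop3.10} shows the representing measure of ${\widehat R}{}^A_u$ is carried by ${\overline A}\subset U$, and then Corollary \ref{cor3.22} (measure carried by $U$ implies fine potential) gives the conclusion. The extra bookkeeping you supply (transporting the measure from $B$ to $U$ and checking ${\widehat R}{}^A_u\not\equiv+\infty$) is sound but not a departure from the paper's proof.
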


\begin{proof} According to Proposition \ref{prop3.10} the measure $\mu$ on
$U\cup\Delta_1(U)$ representing ${\widehat R}^A_u$ is carried by
$\overline A$. It follows by the preceding corollary that
${\widehat R}^A_u$ is a fine potential.
\end{proof}

We close this section with the following characterizations of the
invariant functions and the fine potentials on $U$. These characterizations
are  analogous (but only partly comparable) to
\cite[Theorems 4.4 and 4.5]{F4}, respectively, where the
present condition $\overline V\subset U$ was replaced by the weaker condition
$\widetilde V\subset U$.

\begin{theorem}\label{thm3.16}
Let $u\in {\cal S}(U)$. Then $u$ is invariant if and only if
${\widehat R}{}_u^{U\setminus V}=u$
for any regular finely open set $V\subset U$ such that the closure
${\overline V}$ of \,$V$ in the natural topology on $B$ is contained in $U$.
\end{theorem}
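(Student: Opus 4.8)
The statement is an "if and only if" with one direction being essentially a special case of Theorem 2.5(b) (or of the cited \cite[Theorem 4.4]{F4}) and the other direction requiring an approximation argument to pass from the weaker condition $\widetilde V\subset U$ to the stronger condition $\overline V\subset U$. So let me think carefully about which way each implication goes.
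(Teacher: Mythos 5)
There is no proof here: what you have written is an outline that stops before deciding which implication is which, and the outline itself points in the wrong direction. Neither implication of Theorem \ref{thm3.16} is ``essentially a special case'' of Theorem \ref{thm2.5}(b) or of \cite[Theorem 4.4]{F4}. Those results involve the \emph{fine} closure condition $\widetilde V\subset U$ and (in the cited form) only the existence of \emph{some} countable cover with the reduction property, whereas the ``only if'' direction of Theorem \ref{thm3.16} asserts $\widehat R{}_u^{U\setminus V}=u$ for \emph{every} regular finely open $V$ whose \emph{natural} closure $\overline V$ (in the Riesz--Martin space) lies in $U$. The two closure conditions are not related by a trivial inclusion (one closure is taken in $\Omega$, the other in $\overline U$), so a vague ``approximation argument to pass from $\widetilde V\subset U$ to $\overline V\subset U$'' does not identify the actual difficulty, and you give no such argument. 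The whole point of the theorem is that the natural-closure hypothesis is handled by the Section 3 machinery: for the forward direction the paper writes $u\le\widehat R{}_u^{V}+\widehat R{}_u^{U\setminus V}$, splits $u=u_1+u_2$ by the Riesz decomposition property with $u_1\le\widehat R{}_u^{V}$, and invokes Corollary \ref{cor3.11} (resting on Proposition \ref{prop3.10} and Corollary \ref{cor3.22}, i.e.\ the Martin representation) to see that $\widehat R{}_u^{V}$, hence $u_1$, is a fine potential, so $u_1=0$ by invariance of $u$; none of this is in your plan.

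For the converse, the paper again uses the representation $u=K\mu$ with $\mu$ on $U\cup\Delta_1(U)$, applies Lemma \ref{lemma3.21a} to get $\int_U\bigl(P_y-\widehat R{}_{P_y}^{U\setminus V}\bigr)d\mu(y)=0$, uses $P_y-\widehat R{}_{P_y}^{U\setminus V}=G_V(\cdot,y)/\Phi(G_U(\cdot,y))>0$ on $V$ to conclude $\mu(V)=0$, and then exhausts $U$ by the countable family of finely open, relatively naturally compact sets of Lemma \ref{lemma4.2}(c) to get $\mu(U)=0$, whence $u$ is invariant by Theorem \ref{thm2.17}. A route closer to your suggestion is conceivable for this direction only (from the hypothesis, $u_{|V}$ is invariant by Lemma \ref{lemma2.3}, and one could try Theorem \ref{thm2.5}(b)), but it still requires producing a countable cover of $U$ by \emph{regular} finely open sets with natural closure in $U$ --- exactly the content of Lemma \ref{lemma4.2}(c) plus a regularization step you do not address. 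As it stands, the proposal supplies no argument for either implication and misstates which prior results can be cited, so it cannot be accepted.
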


\begin{proof}
Suppose that $u$ is invariant. For any $V$ as stated we have
$u\le {\widehat R}{}_u^V+{\widehat R}{}_u^{U\setminus V}$.
By the Riesz decomposition property \cite[p.\ 129]{F1} there are functions
$u_1,u_2\in {\cal S}(U)$ such that
$u=u_1+u_2$ with
$u_1\le {\widehat R}{}_u^V$ and $u_2 \le{\widehat R}{}_u^{U\setminus V}$.
But ${\widehat R}{}_u^V$ is a fine potential by Corollary \ref{cor3.11}, and
so is therefore $u_1$. It follows that $u_1=0$ because $u_1\preccurlyeq u$
and $u$ is invariant. Consequently, $u\le {\widehat R}{}_u^{U\setminus V}$,
and so indeed ${\widehat R}{}_u^{U\setminus V}=u$.
Conversely, suppose that ${\widehat R}{}_u^{U\setminus V}=u$
for any regular finely open (and finely connected, if we like) set
$V\subset U$ with ${\overline V}\subset U$. Let $\mu$ be the (finite) measure on
$U\cup\Delta_1(U)$ which represents $u$. Then
$$u=\int_UP_yd\mu(y)+\int_{\Delta_1(U)}K(.,Y)d\mu(Y).$$
For any regular fine domain $V\subset U$ with
${\overline V}\subset U$ we have by hypothesis and by Lemma \ref{lemma3.21a}
$$u={\widehat R}{}_u^{U \setminus V}
=\int_U {\widehat R}^{U\setminus V}_{P_y}d\mu(y)+
\int_{\Delta_1(U)}{\widehat R}^{U\setminus V}_{K(.,Y)}d\mu(Y),$$
and hence
$$\int_U(P_y-{\widehat R}^{U\setminus V}_{P_y})d\mu(y)=0.$$
Since $V$ is regular and since
$P_y-\widehat R{}_{P_y}^{U\setminus V}=G_V(.,y)/\Phi(G_U(.,y))>0$ on $V$ in view of
\cite[Lemma 2.6]{F4}, it follows
that $\mu(V)=0$. This implies by Lemma \ref{lemma4.2} (c) (together with the
fact that a finely open set has only countably many fine components) that $\mu$
is carried by $\Delta_1(U)$. Consequently, $u$ is invariant according to Theorem
\ref{thm2.17}.
\end{proof}

\begin{cor}\label{3.17}
Let $u\in {\cal S}(U)$. Then $u$ is a fine potential if and only if
${\widehat{\inf_j}}{\widehat R}{}_u^{U\setminus V_j}=0$
for some, and hence any, cover of \,$U$ by an increasing sequence $(V_j)$
of regular finely open sets
such that ${\overline V_j}\subset V_{j+1}\subset U$ for every $j$.
\end{cor}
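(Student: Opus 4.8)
The plan is to deduce this corollary from Theorem \ref{thm3.16} together with the integral representation $u=K\mu$ from Theorem \ref{thm3.21}, using Lemma \ref{lemma3.21a} to commute reduction with integration. First I would observe that, since $u$ is a fine potential exactly when the invariant part $h$ in the Riesz decomposition $u=p+h$ vanishes, and since by Corollary \ref{cor3.22} the representing measure $\mu$ on $U\cup\Delta_1(U)$ is carried by $U$ precisely when $u$ is a fine potential, it suffices to characterize $\mu(\Delta_1(U))=0$ by the stated condition on the reductions $\widehat R{}_u^{U\setminus V_j}$.

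\emph{Sufficiency of the condition for the potential property.} Suppose $\widehat\inf_j\widehat R{}_u^{U\setminus V_j}=0$ for some increasing cover $(V_j)$ of $U$ by regular finely open sets with $\overline V_j\subset V_{j+1}\subset U$. Write $u=K\mu$ with $\mu$ on $U\cup\Delta_1(U)$, and split $\mu=\nu+\lambda$ with $\nu$ carried by $U$ (giving the fine potential part $p=K\nu$) and $\lambda$ carried by $\Delta_1(U)$ (giving the invariant part $h=K\lambda$). By Lemma \ref{lemma3.21a}, $\widehat R{}_h^{U\setminus V_j}=\int_{\Delta_1(U)}\widehat R{}_{K(.,Y)}^{U\setminus V_j}\,d\lambda(Y)$. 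Since each $K(.,Y)$ for $Y\in\Delta_1(U)$ is invariant and $\overline V_j\subset U$, Theorem \ref{thm3.16} gives $\widehat R{}_{K(.,Y)}^{U\setminus V_j}=K(.,Y)$, hence $\widehat R{}_h^{U\setminus V_j}=h$ for every $j$. Therefore $h=\widehat\inf_j\widehat R{}_h^{U\setminus V_j}\le\widehat\inf_j\widehat R{}_u^{U\setminus V_j}=0$, so $h=0$ and $u=p$ is a fine potential.

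\emph{Necessity.} Conversely suppose $u\in\cal P(U)$, and let $(V_j)$ be any cover of $U$ by an increasing sequence of regular finely open sets with $\overline V_j\subset V_{j+1}\subset U$; such covers exist by Lemma \ref{lemma4.2} (c) (after replacing the $V_k$ there by suitable enlargements, or by intersecting with an exhausting sequence of regular sets and taking fine components). Put $q:=\widehat\inf_j\widehat R{}_u^{U\setminus V_j}$. Since the sets $U\setminus V_j$ decrease to $\varnothing$, the sequence $\widehat R{}_u^{U\setminus V_j}$ decreases, and each $\widehat R{}_u^{U\setminus V_j}$ is specifically dominated by $u$, so by \cite[c), p.\ 132]{F1} (the same reference used in Theorem \ref{thm2.5}(c)) $q$ is the specific infimum and is itself in $\cal S(U)$. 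The restriction of $q$ to any $V_i$ equals $\widehat\inf_{j\ge i}\widehat R{}_u^{U\setminus V_j}_{|V_i}$, and each $\widehat R{}_u^{U\setminus V_j}$ is, by Lemma \ref{lemma2.3}, invariant on the finely open set $V_i\subset V_j\subset U\setminus(U\setminus V_j)$ when $j\ge i$; hence by Theorem \ref{thm2.5}(c) $q_{|V_i}$ is invariant. Since $(V_i)$ is a countable cover of $U$ by regular finely open sets, Theorem \ref{thm2.5}(b) shows $q$ is invariant on $U$. But $q\le\widehat R{}_u^{U\setminus V_1}\preccurlyeq u$ with $u$ a fine potential, so $q$, being both invariant and a fine potential, is $0$. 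Finally, the displayed condition holding for one such cover implies it holds for any, by the sufficiency direction (which shows it is equivalent to $u$ being a fine potential, a property independent of the cover).

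The main obstacle I anticipate is the necessity direction: one must verify that $q=\widehat\inf_j\widehat R{}_u^{U\setminus V_j}$ is genuinely invariant, which requires the countable sheaf property of invariant functions (Theorem \ref{thm2.5}(b)) applied to the cover $(V_j)$, and this in turn uses that each $V_j$ is regular and that $\widehat R{}_u^{U\setminus V_j}$ is invariant on $V_i$ for $j\ge i$ via Lemma \ref{lemma2.3}. A secondary point to handle carefully is the existence of covers $(V_j)$ with the strong nesting $\overline V_j\subset V_{j+1}$ in the natural topology on $B$; this can be arranged by combining Lemma \ref{lemma4.2} (c) with the compactness of the sets $A_{kl}$ from the proof of Proposition \ref{prop3.6}, passing to fine components to ensure regularity.
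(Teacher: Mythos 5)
Your proof is correct and takes essentially the same route as the paper: the necessity direction is identical (Lemma~\ref{lemma2.3} plus Theorem~\ref{thm2.5}(c) and (b) show the regularized infimum is invariant, and being dominated by the fine potential $u$ it must vanish), and the sufficiency direction rests, as in the paper, on Theorem~\ref{thm3.16} applied to the invariant part $h$. Your detour through the representation $u=K\mu$ and Lemma~\ref{lemma3.21a} is harmless but unnecessary, since Theorem~\ref{thm3.16} applies directly to $h$, yielding $h=\widehat R{}_h^{U\setminus V_j}\le\widehat R{}_u^{U\setminus V_j}$ at once.
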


\begin{proof}
Suppose first that $u$ is a fine potential, and consider any cover $(V_j)$ of
$U$ as stated. Denote $v={\widehat{\inf}}_j\widehat R{}_u^{U\setminus V_j}$,
which is likewise a fine potential.
For each index $k$, $\widehat R{}_u^{U\setminus V_j}$ is invariant on $V_k$ for any
$j\ge k$ according to Lemma \ref{lemma2.3}. It follows by Theorem \ref{thm2.5}
(c) that $v$ likewise is invariant on $V_k$. By varying $k$ we see from
Theorem \ref{thm2.5} (b) applied to the regular finely open sets $V_k$ that
$v$ is invariant on $\bigcup_kV_k=U$. Consequently $v=0$.
Conversely, suppose that ${\widehat\inf}_j\widehat
R{}_u^{U\setminus V_j}=0$ for some cover $(V_j)$ as stated in the corollary.
We have $u=p+h$, where $p$ is a fine
potential and $h$ is an invariant function. By the preceding theorem,
$h={\widehat R}{}_h^{U\setminus V_j}\le {\widehat R}{}_u^{U\setminus V_j}$
for every $j$, and hence $h=0$, showing that $u$ is a fine potential.
\end{proof}

\section{The Fatou-Na{\"\i}m-Doob theorem for\\
finely superharmonic functions}\label{sec4}

As mentioned in the Introduction, this section is inspired by the axiomatic
approach to the Fatou-Na{\"\i}m-Doob theorem given in \cite{T}. These axioms
are, however, only partially fulfilled in our setting. In particular, our
invariant functions, which play the role of positive harmonic functions, may
take the value $+\infty$. We therefore choose to give the proof of the
Fatou-Na{\"\i}m-Doob theorem without reference to the proofs in \cite{T}.

We continue considering a regular fine domain $U$ in a Greenian domain
$\Omega\subset\RR^n$, $n\ge2$.  Recall that ${\cal P}(U)$ denotes the
band in ${\cal S}(U)$ consisting of all fine potentials on $U$, and that
the orthogonal band $\cal P(U)^\perp=\cal H_i(U)$ relative to ${\cal S}(U)$
consists of all invariant functions $h$ on $U$; these are characterized
within $\cal S(U)$ by their integral representation $h=K\mu$, that is,
 $$
h(x)=K\mu(x)=\int K(x,Y)d\mu(Y)
 $$
in terms of a unique measure $\mu$ on $\overline U$ carried by the minimal
Martin boundary $\Delta_1(U)$ (briefly: a measure on $\Delta_1(U)$), see
Theorem \ref{thm3.21} and Corollary \ref{cor3.22}. In the
present section we shall not consider the whole Riesz-Martin space
$\overline U$ and the full Riesz-Martin kernel
$K:U\times\overline U\longrightarrow\;]0,+\infty]$ (Definition \ref{def3.1}),
but only the Martin boundary $\Delta(U)$ and the Martin
kernel, the restriction of the Riesz-Martin kernel to $U\times\Delta(U)$, and
$K$ will henceforth stand for this restriction. It is understood that $U$
and $\Delta(U)$ are given the natural topology (induced by the natural
topology on the Riesz-Martin space $\overline U)$).
In particular, by Lemma \ref{lemma4.2}(c), $U$ is a
$K_\sigma$ subset of $\overline U$, and we know that $\Delta(U)$ and the
minimal Martin boundary $\Delta_1(U)$ are  $G_\delta$ subsets of
$\overline U$ (Proposition \ref{prop3.6} and Corollary \ref{cor3.7a}).
We shall need the following two preparatory lemmas.

\begin{lemma}\label{lemma7.2}
{\rm(a)} If \,$h_1,h_2\in\cal H_i(U)$, $p\in{\cal P}(U)$, and if
\,$h_1\le h_2+p$, then $h_1\preccurlyeq h_2$ and $h_2-h_1\in{\cal H}{}_i(U)$.

{\rm(b)} If \,$(h_j)$ is an increasing sequence of functions
$h_j\in\cal H_i(U)$ majorized by some $u\in{\cal S}(U)$ then
$\sup_j h_j\in\cal H_i(U)$.
\end{lemma}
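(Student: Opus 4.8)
\textbf{Proof plan for Lemma \ref{lemma7.2}.}

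For part (a), the plan is to use the band structure of $\cal S(U)$ together with the Riesz decomposition. From $h_1\le h_2+p$ I would first invoke the Riesz decomposition property \cite[p.\ 129]{F1} to write $h_1=h_1'+h_1''$ with $h_1'\preccurlyeq h_2$ and $h_1''\preccurlyeq p$. Since $\cal P(U)$ is a band, $h_1''\in\cal P(U)$; but $h_1\in\cal H_i(U)$ and $h_1''\preccurlyeq h_1$, so $h_1''$ lies in the band $\cal H_i(U)$ as well, forcing $h_1''=0$. Hence $h_1=h_1'\preccurlyeq h_2$. For the last assertion, $h_1\preccurlyeq h_2$ means $h_2=h_1+s$ for some $s\in\cal S(U)$; I would then check that $s$ is invariant. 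Indeed $s=h_2-h_1$ and, since $\cal H_i(U)$ is a band (hence a lattice-ordered face of $\cal S(U)$ stable under specific differences of comparable elements), the specific difference of two elements of the band stays in the band; alternatively, decompose $s=p_s+h_s$ with $p_s\in\cal P(U)$, $h_s\in\cal H_i(U)$, and observe $p_s\preccurlyeq s\preccurlyeq h_2$ forces $p_s=0$ by orthogonality of the bands.

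For part (b), let $h=\sup_j h_j$. Since $(h_j)$ is increasing and bounded above by $u\in\cal S(U)$, the supremum is already finely l.s.c.\ and finite on a finely dense set, so $h\in\cal S(U)$ (this is the standard fact that an increasing sequence in $\cal S(U)$ with a majorant in $\cal S(U)$ has its supremum in $\cal S(U)$, cf.\ \cite[\S12.9]{F1}). It remains to show $h$ is invariant, i.e.\ $h\in\cal P(U)^\perp$. I would write $h=p+h_\infty$ with $p\in\cal P(U)$ and $h_\infty\in\cal H_i(U)$ via the fine Riesz decomposition, and aim to prove $p=0$. The key point is that for each $j$, $h_j\le h$, and by part (a) applied with $h_1=h_j$, $h_2=h_\infty$, $p=p$ (so $h_j\le h_\infty+p$) we get $h_j\preccurlyeq h_\infty$; thus $h_\infty$ is a specific majorant of every $h_j$, hence $h=\sup_j h_j\preccurlyeq h_\infty$ (specific suprema respecting pointwise suprema in the band, or simply because $h\le h_\infty\le h$). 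Therefore $h=h_\infty$ and $p=0$, so $h$ is invariant.

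The main obstacle I anticipate is the careful handling of specific order manipulations: passing from $h_j\preccurlyeq h_\infty$ for all $j$ to $\sup_j h_j\preccurlyeq h_\infty$ requires that $\cal H_i(U)$ be closed under countable specific suprema of sequences bounded above in the band — this is exactly the lower/upper completeness of the sublattice $\cal H_i(U)$ noted in Remark \ref{remark2.18}, so it should be cited rather than reproved. A secondary subtlety in part (a) is verifying that the "remainder" $h_2-h_1$ obtained from $h_1\preccurlyeq h_2$ is genuinely in $\cal H_i(U)$ rather than just in $\cal S(U)$; the cleanest route is the orthogonality-of-bands argument, decomposing the remainder and killing its potential part by comparison with $h_2$. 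Everything else is routine once these band-theoretic facts are in place.
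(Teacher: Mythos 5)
Your proof is correct, but it follows a somewhat different route from the paper's. For (a), the paper first applies Lemma \ref{lemma2.1} to get $h_1\preccurlyeq h_2+p$, writes $h_1+s=h_2+p$, decomposes the remainder $s=h+q$ ($h\in\cal H_i(U)$, $q\in\cal P(U)$), and then concludes $h_1+h=h_2$, $q=p$ by \emph{uniqueness} of the Riesz decomposition of $h_2+p$; you instead split $h_1$ itself by the Riesz decomposition property and kill the unwanted parts twice over by band solidity and the orthogonality $\cal P(U)\cap\cal H_i(U)=\{0\}$ (exactly the fact used in the proofs of Theorem \ref{thm2.5}(a) and Theorem \ref{thm3.16}). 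One small imprecision: the splitting property as cited from \cite[p.\ 129]{F1} (and as used in the paper's proof of Theorem \ref{thm3.16}) yields pointwise inequalities $h_1'\le h_2$, $h_1''\le p$, not specific ones; this is harmless, since after $h_1''=0$ you have $h_1=h_1'\le h_2$ with $h_1$ invariant, and Lemma \ref{lemma2.1} then upgrades this to $h_1\preccurlyeq h_2$. For (b), the paper notes that by (a) the sequence is specifically increasing, identifies the pointwise with the specific supremum via \cite[(b), p.\ 132]{F1}, and invokes closure of the band $\cal H_i(U)$ under specific suprema; your argument avoids that machinery entirely by decomposing $h=\sup_jh_j=p+h_\infty$, using (a) to get $h_j\preccurlyeq h_\infty$, and concluding $h\le h_\infty\le h$, hence $p=0$ q.e.\ and so $p=0$ by fine continuity. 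In particular the "obstacle" you flag (countable specific completeness of $\cal H_i(U)$, Remark \ref{remark2.18}) is needed for the paper's route but not for yours — your pointwise sandwich already does the job. Both proofs are sound; the paper's is a two-line appeal to uniqueness and lattice facts, yours is slightly longer but uses only the splitting property, Lemma \ref{lemma2.1}, and band orthogonality.
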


It is understood in (a) (and similarly elsewhere) that $h_2-h_1$ is
defined to be the extension by fine continuity from $\{x\in
U:h_1(x)<+\infty\}$ to $U$, cf.\ \cite[Theorem 9.14]{F1}. (Equivalently,
$h_2-h_1$ is well defined because $\cal S(U)$ is an $H$-cone, as noted after
the proof of Theorem \ref{thm2.6}.) For any set
$A\subset U$, $R{}_u^A$ and ${\widehat R}{}_u^A$ are understood as reduction
and sweeping of a function $u$ on $U$ relative to $U$, whereas
$\eps_x^A$ stands for sweeping of $\eps_x$ on $A$ relative to
all of $\Omega$.

\begin{proof} (a) There exists $s\in{\cal S}(U)$ such
that $h_1+s=h_2+p$. We have $s=h+q$ with $h\in\cal H_i(U)$ and
$q\in{\cal P}(U)$.
It follows that $(h_1+h)+q=h_2+p$ with $h_1+h\in\cal H_i(U)$, and
hence $h_1+h=h_2$ (and $q=p$).

(b) According to (a) the sequence $(h_j)$ is even specifically
increasing. Because $\sup_ju_j\in{\cal S}(U)$ along with $u$ we have
$\sup_ju_j=\curlyvee_jh_j$ (\cite[(b), p.\ 132]{F1}, which belongs to the band
$\cal H_i(U)$ along with each $h_j$.
\end{proof}

\begin{lemma}\label{lemma7.3}
For any set $E\subset U$ and any $Y\in\Delta_1(U)$ we have
${\widehat R}{}_{K(.,Y)}^E\ne K(.,Y)$ if and only if
$\widehat R{}_{K(.,Y)}^E\in{\cal P}(U)$.
 \end{lemma}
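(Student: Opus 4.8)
The plan is to prove both implications through the key dichotomy that $K(.,Y)$ is a \emph{minimal} invariant function, so any specific minorant of it is proportional to it. One direction is immediate: if $\widehat R{}_{K(.,Y)}^E\in{\cal P}(U)$, then since $K(.,Y)$ is invariant and $\cal P(U)$ meets $\cal H_i(U)$ only in $\{0\}$, the equality $\widehat R{}_{K(.,Y)}^E = K(.,Y)$ would force $K(.,Y)=0$, contradicting $K(.,Y)\in B\subset\cal S(U)\setminus\{0\}$; hence $\widehat R{}_{K(.,Y)}^E\ne K(.,Y)$. So the substance is in the converse.

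\textbf{The converse.} Suppose $\widehat R{}_{K(.,Y)}^E\ne K(.,Y)$; I must show $\widehat R{}_{K(.,Y)}^E$ is a fine potential. Write $h:=K(.,Y)$ and decompose $\widehat R{}_h^E = p + k$ with $p\in{\cal P}(U)$ and $k\in\cal H_i(U)$ (Riesz decomposition). The goal is to show $k=0$. Since $\widehat R{}_h^E\le h$ and $\widehat R{}_h^E\preccurlyeq h$ — indeed $\widehat R{}_h^E$ is a specific minorant of $h$ by \cite[Lemma 11.14]{F1} (Mokobodzki) or directly because reductions are specific minorants — and since $h$ generates an extreme ray of the cone $\cal S(U)$ (as $Y\in\Delta_1(U)$), every specific minorant of $h$ is of the form $\lambda h$ with $0\le\lambda\le1$. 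Therefore $\widehat R{}_h^E = \lambda h$ for some $\lambda\in[0,1]$. By hypothesis $\lambda\ne 1$, so $\lambda<1$. If $\lambda>0$, then $\widehat R{}_h^E=\lambda h$ is a (nonzero) invariant function, not a fine potential in general — so this argument alone does \emph{not} finish the proof, and this is the main obstacle: I cannot conclude directly that $k=0$.

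\textbf{Resolving the obstacle via idempotence of sweeping.} The fix is to iterate the reduction. Sweeping onto a fixed set is idempotent: $\widehat R{}_{\widehat R{}_h^E}^E = \widehat R{}_h^E$ (up to the usual base considerations, cf.\ the proof of Lemma \ref{lemma2.2}, since $\widehat R{}_h^E = \widehat R{}_h^{b(E)\cap U}$ and the base operation is idempotent). Combining with $\widehat R{}_h^E = \lambda h$ and the positive homogeneity of the reduction operator gives $\widehat R{}_h^E = \widehat R{}_{\lambda h}^E = \lambda \widehat R{}_h^E = \lambda^2 h$, hence $\lambda = \lambda^2$, forcing $\lambda\in\{0,1\}$. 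Since $\lambda\ne 1$ by hypothesis, $\lambda = 0$, i.e.\ $\widehat R{}_h^E = 0$, which is trivially a fine potential. (Alternatively, once $\widehat R{}_h^E=\lambda h$ with $\lambda<1$: if $\widehat R{}_h^E$ were invariant and nonzero, then $h-\widehat R{}_h^E = (1-\lambda)h$ would be invariant too by Lemma \ref{lemma2.1}, and applying $\widehat R{}_{\cdot}^E$ to the equation $h = \widehat R{}_h^E + (1-\lambda)h$ and using $\widehat R{}_{(1-\lambda)h}^E = (1-\lambda)\lambda h$ leads to the same relation $\lambda = \lambda^2$.) In either formulation the point is that the only specifically self-reproducing specific minorants of a minimal function under sweeping onto a fixed set are $0$ and the function itself, which is exactly the content of the lemma. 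The one routine verification needed is that sweeping commutes with multiplication by the scalar $\lambda$ and is idempotent on a fixed set $E$, both of which follow from \cite[Section 11]{F1}.
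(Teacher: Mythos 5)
Your easy direction is fine and agrees with the paper. But the converse contains a genuine gap: you assert that $\widehat R{}_{K(.,Y)}^E\preccurlyeq K(.,Y)$, i.e.\ that a swept function is a \emph{specific} minorant of the function being swept. This is false in general: sweeping only gives a pointwise minorant, and the difference $K(.,Y)-\widehat R{}_{K(.,Y)}^E$ is typically not finely superharmonic (already in the classical case: for $h\equiv1$ on a disc and $E$ a smaller closed disc, $1-\widehat R{}_1^E$ vanishes on $E$ and is strictly positive just outside, so it violates the super-mean-value property at $\partial E$). Lemma \ref{lemma2.1} of the paper upgrades a pointwise inequality to a specific one only when the \emph{minorant} is invariant, and $\widehat R{}_{K(.,Y)}^E$ is in general not invariant on $U$ (it is invariant only off $\widetilde E$, by Lemma \ref{lemma2.3}). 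Nor does Mokobodzki's inequality \cite[Lemma 11.14]{F1} give what you want: in the paper it yields $\widehat R{}_f\preccurlyeq u$ where $u=s+f$, not $\widehat R{}_f^E\preccurlyeq f$. Once this step fails, so does the conclusion $\widehat R{}_{K(.,Y)}^E=\lambda K(.,Y)$ with $\lambda\in\{0,1\}$ — and indeed that conclusion is absurd: sweeping $K(.,Y)$ onto a compact non-polar set $E\subset U$ (which is minimal-thin at $Y$) produces a nonzero fine potential, which is neither $0$ nor $K(.,Y)$. So your argument proves a statement stronger than the lemma and false.

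The paper's proof uses minimality in a different (and legitimate) place. From $\widehat R{}_{K(.,Y)}^E\ne K(.,Y)$ one picks $x_0$ with $R_{K(.,Y)}^E(x_0)<K(x_0,Y)$ and a superfunction $u\ge K(.,Y)$ on $E$ with $u(x_0)<K(x_0,Y)$; after replacing $u$ by $u\wedge K(.,Y)$ one writes the Riesz decomposition $u=q+h$ with $q\in\cal P(U)$, $h$ invariant. Now $h\le K(.,Y)$ pointwise and $h$ \emph{is} invariant, so Lemma \ref{lemma7.2} (or Lemma \ref{lemma2.1}) gives $h\preccurlyeq K(.,Y)$, and minimality yields $h=\alpha K(.,Y)$ with $\alpha<1$ (strictness from the value at $x_0$). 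Then on $E$ one has $K(.,Y)=u-h+ (K(.,Y)-u)=q/(1-\alpha)$ in the sense that $q=(1-\alpha)K(.,Y)$ there, so $K(.,Y)$ is majorized on $E$ by the fine potential $p:=q/(1-\alpha)$, whence $\widehat R{}_{K(.,Y)}^E=\widehat R{}_p^E\le p$ lies in $\cal P(U)$. If you want to salvage your idea, the proportionality argument must be applied to the invariant part of a suitable Riesz decomposition, not to the swept function itself.
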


\begin{proof} Note that, for any $u\in{\cal S}(U)$ and $E\subset U$, we have
 $R{}_u^E\ne u\;\Longleftrightarrow\;{\widehat R}{}_u^E\ne u$.
Proceeding much as in \cite[proof of G)]{T}, see also \cite{Go}, we
first suppose that $\widehat R{}_{K(.,Y)}^E\ne K(.,Y)$. Since
$R_{K(.,Y)}^E\le K(.,Y)$ there
exists $x_0\in U$ with $R_{K(.,Y)}^E(x_0)<K(x_0,Y)$. Thus there exists
$u\in{\cal S}(U)$ such that $u\ge K(.,Y)$ on $E$ and $u(x_0)<K(x_0,Y)$.
Replacing $u$ by $u\wedge K(.,Y)\in{\cal S}(U)$ we arrange that $u\le K(.,Y)$
on all of $U$. Writing $u=q+h$ with $q\in{\cal P}(U)$ and $h\in{\cal
U_i}(U)$ we have $h\le u\le K(.,Y)$ on $U$. Because $K(.,Y)\in\cal H_i$ is
minimal invariant and $h\preccurlyeq K(.,Y)$ by Lemma \ref{lemma7.2}
(with $p=0$), there exists $\alpha\in[0,1]$ such that
$h=\alpha K(.,Y)$, and here $\alpha<1$ since $h(x_0)\le
u(x_0)<K(x_0,Y))$. On $E$ we have $q=u-h=K(.,Y)-h=(1-\alpha)K(.,Y)$, and
hence
 $$
K(.,Y)=p:=\frac1{1-\alpha}q\quad\text{on }E.
 $$
 Thus $p\in{\cal P}(U)$ and $\widehat R{}_{K(.,Y)}^E=\widehat R{}_p^E\le p$ on
$U$, so indeed $\widehat R{}_{K(.,Y)}^E\in\cal P(U)$. Conversely, suppose that
$\widehat R{}_{K(.,Y)}^E=K(.,Y)$ and (by contradiction) that
$\widehat R{}_{K(.,Y)}^E\in\cal P(U)$. Being thus  both a fine potential and
invariant, $K(.,Y)$ must equal $0$, which is false.
\end{proof}

\begin{definition}\label{def7.4}  A set $E\subset U$ is said to be
minimal-thin at a point $Y\in\Delta_1(U)$ if \,$\widehat
R^E_{K(.,Y)}\ne K(.,Y)$, or equivalently (by the preceding lemma)
if \,$\widehat R{}_{K(.,Y)}^E$ is a fine potential on $U$.
\end{definition}

\begin{cor}\label{cor7.5} For any $Y\in\Delta_1(U)$ the sets $W\subset U$
for which $U\setminus W$ is minimal-thin at \,$Y$ form a filter on $U$.
 \end{cor}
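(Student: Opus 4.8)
The plan is to check directly the three defining properties of a filter for the family
$$
\cal F(Y)=\{W\subset U:\ U\setminus W\ \text{is minimal-thin at}\ Y\}:
$$
it contains $U$ but not $\varnothing$, it is closed under passage to larger subsets of $U$, and it is closed under finite (equivalently, binary) intersections. Under complementation the last property is exactly the statement that a finite union of sets which are minimal-thin at $Y$ is again minimal-thin at $Y$, and that is the only step with real content; the rest will follow from the monotonicity of $E\mapsto\widehat R{}_{K(.,Y)}^E$ together with Lemma \ref{lemma7.3}.

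For the elementary parts I would argue as follows. Since $\widehat R{}_{K(.,Y)}^{\varnothing}=0\ne K(.,Y)$ (recall $K(.,Y)\in B\subset\cal S(U)\setminus\{0\}$), the set $\varnothing=U\setminus U$ is minimal-thin at $Y$, so $U\in\cal F(Y)$; on the other hand $\widehat R{}_{K(.,Y)}^{U}=K(.,Y)$, so $U=U\setminus\varnothing$ is not minimal-thin at $Y$ and $\varnothing\notin\cal F(Y)$. If $W\in\cal F(Y)$ and $W\subset W'\subset U$, put $E=U\setminus W$ and $E'=U\setminus W'\subset E$. Then $\widehat R{}_{K(.,Y)}^{E'}\le\widehat R{}_{K(.,Y)}^{E}$, and the latter is a fine potential by Definition \ref{def7.4}; since an $\cal S(U)$-minorant of a fine potential is again a fine potential (the fine potentials being precisely the members of $\cal S(U)$ all of whose finely subharmonic minorants are $\le0$), we get $\widehat R{}_{K(.,Y)}^{E'}\in\cal P(U)$, i.e.\ $E'$ is minimal-thin at $Y$ and $W'\in\cal F(Y)$.

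It remains to show that $E_1,E_2$ minimal-thin at $Y$ implies $E_1\cup E_2$ minimal-thin at $Y$, since closure of $\cal F(Y)$ under binary intersection then follows on taking $E_i=U\setminus W_i$. The key is the subadditivity
$$
\widehat R{}_{K(.,Y)}^{E_1\cup E_2}\le\widehat R{}_{K(.,Y)}^{E_1}+\widehat R{}_{K(.,Y)}^{E_2}\quad\text{on }U.
$$
Before regularization this is immediate: if $v_i\in\cal S(U)$ and $v_i\ge K(.,Y)$ on $E_i$, then $v_1+v_2\ge K(.,Y)$ on $E_1\cup E_2$ because $K(.,Y)\ge0$, whence $R{}_{K(.,Y)}^{E_1\cup E_2}\le R{}_{K(.,Y)}^{E_1}+R{}_{K(.,Y)}^{E_2}$. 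Passing to finely l.s.c.\ regularizations changes each reduced function only on a polar set and leaves the finely superharmonic functions on the right-hand side unaltered, so the displayed inequality holds quasi-everywhere, hence everywhere since an inequality between finely hyperharmonic functions valid q.e.\ is valid everywhere (consider the finely hyperharmonic function $\widehat R{}_{K(.,Y)}^{E_1\cup E_2}\wedge(\widehat R{}_{K(.,Y)}^{E_1}+\widehat R{}_{K(.,Y)}^{E_2})$, which agrees q.e.\ with $\widehat R{}_{K(.,Y)}^{E_1\cup E_2}$). Now the right-hand side is a fine potential, $\cal P(U)$ being stable under addition, so its $\cal S(U)$-minorant $\widehat R{}_{K(.,Y)}^{E_1\cup E_2}$ is a fine potential too; by Lemma \ref{lemma7.3} this gives $\widehat R{}_{K(.,Y)}^{E_1\cup E_2}\ne K(.,Y)$, i.e.\ $E_1\cup E_2$ is minimal-thin at $Y$. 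Hence $\cal F(Y)$ is a filter on $U$.

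I expect the only mildly technical point to be this last passage, from the pointwise subadditivity of the reduced functions $R$ to the same inequality for the swept functions $\widehat R$, i.e.\ the handling of the polar exceptional sets; everything else is bookkeeping with Definition \ref{def7.4}, Lemma \ref{lemma7.3}, and the band properties of $\cal P(U)$.
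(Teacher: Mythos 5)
Your proof is correct and follows essentially the route the paper intends: the whole content is that a finite union of sets minimal-thin at $Y$ is again minimal-thin, which you obtain from Lemma \ref{lemma7.3} via the subadditivity $\widehat R{}_{K(.,Y)}^{E_1\cup E_2}\le\widehat R{}_{K(.,Y)}^{E_1}+\widehat R{}_{K(.,Y)}^{E_2}$ and the fact that an $\cal S(U)$-minorant of a fine potential is a fine potential — precisely what the paper compresses into ``easily implies''. Your explicit verification of the remaining filter axioms and of the q.e.-to-everywhere passage is just a fuller write-up of the same argument.
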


This follows from Lemma \ref{lemma7.3} which easily implies that for any
$W_1,W_2\subset U$ such that ${\widehat R}{}_{K(.,Y)}^{U\setminus W_i}\ne K(.,Y)$
for $i=1,2$, we have ${\widehat R}{}_{K(.,Y)}^{U\setminus(W_1\cup W_2)}\ne K(.,Y)$.

The filter from Corollary \ref{cor7.5} is called the minimal-fine filter
at $Y$ and will be denoted by $\cal F(Y)$.
A limit along that filter will be called a minimal-fine limit and will be
denoted by $\lim_{\cal F(Y)}$.

For any two functions $u,v\in\cal S(U)$ with $v\ne0$ the quotient $u/v$
is assigned some arbitrary value, say $0$, on the polar set of points
at which both functions take the value $+\infty$. The choice of such a
value does not affect a possible minimal-fine limit of $u/v$  at a point
$Y\in\Delta_1(U)$ because every polar set $E$ clearly is minimal-thin
at any point of $\Delta_1(U)$.

For any two measures $\mu,\nu$ on a measurable space we denote by
$d\nu/d\mu$ the Radon-Nikod\'ym derivative of the absolutely continuous
component of $\nu$ relative to that of $\mu$, cf.\ e.g.\ \cite[p.\ 773]{Do},
\cite[p.\ 305f]{AG}.

For any function $u\in\cal S(U)$ with Riesz decomposition $u=p+h$, where
$p\in\cal P(U)$ and $h\in{\cal H}{}_i(U)$, we denote by $\mu_u$ the unique
measure on $\Delta_1(U)$ which represents the invariant part $h$ of $u$,
that is, $h=\int K(.,Y)d\mu_u(Y)$.

We may now formulate the Fatou-Na{\"\i}m-Doob theorem in the present
setting of finely superharmonic functions on a regular fine domain $U$.
It clearly contains the
classical Fatou-Na{\"\i}m-Doob theorem for which we refer to
\cite[1.XII.19]{Do}, \cite[9.4]{AG}.

\begin{theorem}\label{thm7.6} Let $u,v\in\cal S(U)$, where $v\ne0$. Then
$u/v$ has minimal-fine limit $d\mu_u/d\mu_v$ at $\mu_v$-a.e.\ point $Y$ of
$\Delta_1(U)$.
\end{theorem}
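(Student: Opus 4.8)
The strategy is the classical one, adapted to the fine setting: reduce the general statement to the case $v=h$ invariant with representing measure $\mu_v=\mu_h$, then reduce to $h$ finely harmonic (finite) by a further truncation argument, and finally prove the $h$-relative Fatou-Na{\"\i}m-Doob statement by a maximal-function / $L^1$-type estimate on the Martin boundary. First I would observe that since $u/v \le (u+p)/v$ for any fine potential $p$, and since a fine potential $p$ satisfies $\widehat R_p^{U\setminus V_j}\to 0$ along an exhausting sequence (Corollary 3.17) — so that $p/v$ has minimal-fine limit $0$ at $\mu_v$-a.e.\ $Y\in\Delta_1(U)$ — it suffices to treat the case where both $u$ and $v$ are \emph{invariant}, i.e.\ $u=K\mu_u$, $v=K\mu_v$ with $\mu_u,\mu_v$ measures on $\Delta_1(U)$. (The potential part of $v$ only makes $v$ larger, hence $u/v$ smaller, and one must also check the liminf direction; that is where the genuine work is.)

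\textbf{Main steps.} (1) \emph{Reduction to invariant $u,v$.} As above, using the integral representation $u=p+K\mu_u$, $v=q+K\mu_v$ and the fact that $q$ is dominated specifically by $v$, replace $u,v$ by their invariant parts; the potential parts contribute minimal-fine limit $0$ (for $p/v$) and can only help (for $q$ in the denominator) — the delicate point is that adding $q$ to the denominator could in principle \emph{destroy} a limit, so one argues that $\widehat R_q^{U\setminus V_j}\to 0$ forces $q$ to be negligible along $\cal F(Y)$ for $\mu_v$-a.e.\ $Y$, via Definition \ref{def7.4} and a Fubini argument over $\mu_v$. (2) \emph{The Lebesgue-decomposition.} Write $\mu_u = f\,\mu_v + \mu_s$ with $f = d\mu_u/d\mu_v \in L^1(\mu_v)$ and $\mu_s\perp\mu_v$. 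Then $u = K(f\mu_v) + K\mu_s$. For the singular part I would show $K\mu_s/v$ has minimal-fine limit $0$ at $\mu_v$-a.e.\ $Y$: this is the analogue of the statement that a measure singular with respect to $\mu_v$ gives a Martin-representable invariant function whose ratio to $v$ tends to $0$ minimal-finely $\mu_v$-a.e. (3) \emph{The absolutely continuous part.} For $u = K(f\mu_v)$ with $f\ge0$ in $L^1(\mu_v)$, show $K(f\mu_v)/v \to f(Y)$ along $\cal F(Y)$ for $\mu_v$-a.e.\ $Y$. By approximating $f$ in $L^1(\mu_v)$ by bounded (or continuous) functions and using the maximal-function estimate from step (4), one reduces to $f$ continuous, where a direct estimate using lower semicontinuity of $K(x,\cdot)$ (Proposition \ref{prop3.1b}(i)) and the minimal-thinness machinery gives the result. (4) \emph{The weak-type maximal inequality.} Define, for a measure $\lambda$ on $\Delta_1(U)$, the maximal operator $M\lambda(Y)=\mfliminf_{x\to Y} K\lambda(x)/v(x)$ or rather the corresponding $\limsup$; the key lemma is that $\mu_v\{Y: M\lambda(Y) > t\} \le C\,\lambda(\Delta_1(U))/t$. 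This is proved by a covering / reduced-function argument: if $M\lambda(Y)>t$ on a set $A\subset\Delta_1(U)$, then on a neighborhood filter base one has $K\lambda \ge t\, v$ on sets not minimal-thin at points of $A$, which by Lemma \ref{lemma7.3} and properties of $\widehat R$ forces $\widehat R_{tv\cdot\mathbf 1_A\text{-ish}}^{\,\cdot}$ to be bounded by $K\lambda$, and integrating against the representing measures (using Lemma \ref{lemma3.21a}) yields $t\,\mu_v(A)\le\lambda(\Delta_1(U))$ up to a constant.

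\textbf{The main obstacle.} The hard part will be step (4), the weak-type maximal inequality, together with making step (2) (the singular part vanishes minimal-finely) rigorous in the fine setting — because our invariant functions may take the value $+\infty$ and $\Delta_1(U)$ is only a $G_\delta$, not compact, so the usual compactness-based covering arguments from the classical Martin-boundary proof (as in \cite{Do}, \cite{AG}) do not transfer verbatim. I expect one must work through the sets $A_{kl}$ and $V_k$ (Proposition \ref{prop3.6}, Lemma \ref{lemma4.2}(c)) to localize, exhaust $U$ by the $K_\sigma$ structure, and repeatedly invoke Lemma \ref{lemma7.3} to convert "$\widehat R_{K(.,Y)}^E\ne K(.,Y)$" into the potential-theoretic estimates that drive the maximal inequality. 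Once the weak-type bound is in hand, the passage from continuous $f$ to $L^1(\mu_v)$ and the almost-everywhere convergence are routine (a standard density argument), and the $h$-relative version asserted in the Introduction follows by taking $v=h$ throughout.
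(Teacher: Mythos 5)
Your top-level skeleton (Riesz decomposition of $u$ and $v$, Lebesgue decomposition $\mu_u=f\mu_v+\mu_s$, singular part tends to $0$, absolutely continuous part tends to $f$) is the same as the paper's, and the reduction of the potential parts is indeed unproblematic: the paper disposes of the denominator's potential part simply by writing $u/v=(u/h)\big/(1+p/h)$ and showing $p/h\to0$, so no separate Fubini argument is needed there. The genuine gap lies in your steps (2) and (4), which you yourself flag as ``the main obstacle'' but never actually prove, and whose proposed engine --- a covering-based weak-type maximal inequality --- does not deliver what you need in this setting. The inequality you state, $\mu_v\{M\lambda>t\}\le C\,\lambda(\Delta_1(U))/t$, involves the \emph{total} mass of $\lambda$; to deduce from it that the singular part $K\mu_s/v$ has minimal-fine limit $0$ $\mu_v$-a.e.\ one would need, as in the classical differentiation-of-measures argument, to localize $\lambda$ (outer regularity plus a Vitali/Besicovitch-type covering of the boundary), and exactly this localization is unavailable here: $\Delta_1(U)$ is only a $G_\delta$ in $\overline U$, there is no covering lemma, and your sketch of the covering step (``$\widehat R_{tv\cdot\mathbf 1_A\text{-ish}}$'') is not an argument. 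Likewise your claim that for continuous $f$ a ``direct estimate using lower semicontinuity of $K(x,\cdot)$'' suffices is circular: splitting $f\mu_h$ near and away from a boundary point again requires precisely the singular-measure statement you have not established.

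The missing idea is the elementary key lemma of Taylor \cite{T}, which is what the paper proves instead of any maximal inequality: if $u\in\cal S(U)$, $h\in\cal H_i(U)\setminus\{0\}$ and $u\wedge h\in\cal P(U)$, then $u/h\to0$ minimal-finely $\mu_h$-a.e.\ (Proposition \ref{prop7.7}). Its proof is exactly the mechanism you gesture at in passing: for $A_\alpha=\{Y:\widehat R{}_{K(.,Y)}^{\{u>\alpha h\}}=K(.,Y)\}$ one integrates with Lemma \ref{lemma3.21a} to get $K(1_{A_\alpha}\mu_h)=\widehat R{}_{K(1_{A_\alpha}\mu_h)}^{\{u>\alpha h\}}\le\frac1\alpha(u\wedge h)$, and Lemma \ref{lemma7.2}(a) forces this invariant function to vanish, so $\mu_h(A_\alpha)=0$. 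From this single lemma the singular part is handled at once (mutually singular representing measures give $u\wedge h\in\cal P(U)$, Corollary \ref{cor7.8}), the indicator and step-function cases follow (Corollary \ref{cor7.10}), and the general $f\in L^1(\mu_h)$ case is obtained by the standard argument of \cite[Theorem 9.4.5]{AG} (Proposition \ref{prop7.12}) --- no weak-type estimate, no covering, no density argument. So while a maximal-inequality route might conceivably be salvaged by proving a suitably localized estimate with the same reduction identity, as written your plan leaves the two decisive steps unproved and relies on classical boundary machinery (compactness, coverings, regular localization) that is precisely what fails for a fine domain; the potential-theoretic lemma above is the ingredient you need to supply.
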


For the proof of Theorem \ref{thm7.6} we begin by establishing the following
important particular case, cf.\ \cite[Theorem 1.2]{T}.

\begin{prop}\label{prop7.7} {\rm{(\cite{T}.)}} Let $u\in\cal S(U)$ and
$h\in\cal H_i(U)\setminus\{0\}$, and suppose that $u\wedge h\in{\cal P}(U)$.
Then $u/h$ has minimal-fine limit $d\mu_u/d\mu_h=0$ at $\mu_h$-a.e.\ point $Y$
of \,$\Delta_1(U)$.
\end{prop}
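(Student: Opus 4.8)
The plan is to adapt to our setting the argument of Taylor \cite[Theorem 1.2]{T}. I would first dispose of the equality $d\mu_u/d\mu_h=0$, which is essentially a reformulation of the hypothesis. Write $u=p+h_u$ with $p\in\cal P(U)$ and $h_u\in\cal H_i(U)$ the invariant part of $u$. Then $h_u\wedge h\le u\wedge h\in\cal P(U)$, and since the specific infimum of $h_u$ and $h$ in the band $\cal H_i(U)$ (which is a sublattice of $\cal S(U)$, cf.\ Remark \ref{remark2.18}) is invariant and dominated pointwise by $h_u\wedge h$, it is both invariant and a fine potential, hence $0$. Thus $h_u$ and $h$ are specifically disjoint, so by the simplex structure underlying the integral representation the measures $\mu_u$ and $\mu_h$ on $\Delta_1(U)$ are mutually singular, whence $d\mu_u/d\mu_h=0$ $\mu_h$-a.e. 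It then remains to show that $u/h$ has minimal-fine limit $0$ at $\mu_h$-a.e.\ $Y\in\Delta_1(U)$.

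For this I would fix an integer $n\ge1$ and put $E_n=\{x\in U:u(x)\ge h(x)/n\}$ (the polar set $\{u=h=+\infty\}$ is irrelevant, being minimal-thin at every point of $\Delta_1(U)$). The key observation is that on $E_n$ one has $u\wedge h\ge h/n$, so that
$$\tfrac1n\,\widehat R{}_h^{E_n}=\widehat R{}_{h/n}^{E_n}\le\widehat R{}_{u\wedge h}^{E_n}\le u\wedge h,$$
whence $\widehat R{}_h^{E_n}\le n\,(u\wedge h)$ is a fine potential on $U$ (a function of $\cal S(U)$ dominated by a fine potential being itself a fine potential). Writing $h=K\mu_h$ with $\mu_h$ the finite measure on $\Delta_1(U)$ representing $h$, Lemma \ref{lemma3.21a} gives $\widehat R{}_h^{E_n}=\int_{\Delta_1(U)}\widehat R{}_{K(.,Y)}^{E_n}\,d\mu_h(Y)$. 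Let $N_n$ be the set of $Y\in\Delta_1(U)$ at which $E_n$ is \emph{not} minimal-thin, i.e.\ $\widehat R{}_{K(.,Y)}^{E_n}=K(.,Y)$ (Definition \ref{def7.4}). Restricting the integral to $N_n$, where the integrand equals $K(.,Y)$, and using that $\mu_h|_{N_n}$ is a finite measure on $\Delta_1(U)$, Corollary \ref{cor3.20} shows that $\int_{N_n}K(.,Y)\,d\mu_h(Y)$ is invariant; but it is $\le\widehat R{}_h^{E_n}\in\cal P(U)$, hence both invariant and a fine potential, hence $0$. Since $K(x,Y)>0$ for all $x\in U$, this forces $\mu_h(N_n)=0$.

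To finish, I would set $N=\bigcup_{n\ge1}N_n$, a $\mu_h$-null set, and check that for every $Y\in\Delta_1(U)\setminus N$ the minimal-fine limit of $u/h$ at $Y$ is $0$: for each $n$ the set $E_n$ is minimal-thin at $Y$, so $W_n:=U\setminus E_n=\{x\in U:u(x)<h(x)/n\}$ belongs to the minimal-fine filter $\cal F(Y)$, and $u/h<1/n$ on $W_n$; letting $n\to\infty$ gives $\lim_{\cal F(Y)}u/h=0$, as required.

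The step I expect to be the main obstacle is the measurability of the exceptional set $N_n$, which is implicitly needed in order to restrict $\mu_h$ to $N_n$ and to split the above integral. I would derive it from the measurability of the Riesz-Martin kernel $K$ with respect to $\cal B^*(U)\times\cal B(\overline U)$ established in Remark \ref{remark3.1d}: after replacing $E_n$ by its base $b(E_n)\cap U$ relative to $U$ (which changes neither reductions nor the integral), Lemma \ref{lemma2.2} expresses $\widehat R{}_{K(.,Y)}^{E_n}(x)$ as $\int_U K(z,Y)\,d\eps_x^{b(E_n)\cup(\Omega\setminus U)}(z)$, so that $Y\mapsto\widehat R{}_{K(.,Y)}^{E_n}(x)$ is Borel for each $x$; combined with the characterisation of fine potentials in Corollary \ref{3.17} (applied along a fixed exhaustion of $U$ by regular finely open sets $V_j$ with $\overline V_j\subset V_{j+1}\subset U$), this exhibits $N_n$ as a $\mu_h$-measurable set. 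The remaining points (that $u\wedge h\in\cal S(U)$, the elementary homogeneity and monotonicity of reductions used above, and the handling of the value $+\infty$ in the quotient $u/h$) are routine.
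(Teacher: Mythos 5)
Your proof is correct and follows essentially the same route as the paper: with $\alpha=1/n$ your set $N_n$ is the paper's $A_\alpha$, your inequality $\tfrac1n\widehat R{}_h^{E_n}\le u\wedge h$ is the paper's $\widehat R{}_h^{\{u>\alpha h\}}\le p/\alpha$, and both arguments conclude via Lemma \ref{lemma3.21a} together with the fact that an invariant function dominated by a fine potential must vanish, then let $n\to\infty$ (resp.\ $\alpha\downarrow0$). Your only additions are the explicit verification that $\mu_u$ and $\mu_h$ are mutually singular and the discussion of measurability of $N_n$, a point the paper's proof passes over silently when forming $1_{A_\alpha}\mu_h$.
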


\begin{proof} Write $u\wedge h=p$.  Given $\alpha\in\;]0,1[$, consider any
point $Y\in\Delta_1(U)$ such that $\lim_{\cal F(Y)}\frac{u}{h}>\alpha$. Then
$\{u\le\alpha h\}\notin\cal F(Y)$, that is, $\{u>\alpha h\}$ is not
minimal-thin at $Y$:
\begin{eqnarray}
{\widehat R}{}_{K(.,Y)}^{\{u>\alpha h\}}=K(.,Y).
\end{eqnarray}
It follows that (always with $Y$ ranging over $\Delta_1(U)$)
\begin{eqnarray}
\{Y:\lim\sup_{\cal F(Y)}\,\frac uh>\alpha\}\subset
A_\alpha:=\{Y:{\widehat R}_{K(.,Y)}^{\{u>\alpha h\}}=K(.,Y)\}.
\end{eqnarray}
We show that $\mu_h(A_\alpha)=0$. Consider the measure $\nu=1_{A_\alpha}\mu_h$ on
$\Delta_1(U)$ and the corresponding function
 $$
v=\int K(.,Y)d\nu(Y)
=\int{\widehat R}{}_{K(.,Y)}^{\{u>\alpha h\}}d\nu(Y)=\widehat
R{}_{K\nu}^{\{u>\alpha h\}}={\widehat R}{}_v^{\{u>\alpha h\}},
 $$
the second equality by (4.1)
and the third equality by Lemma \ref{lemma3.21a}. Since
$v\le\int K(.,Y)d\mu_h=h$ and $0<\alpha<1$ it follows that
 $$
v={\widehat R}{}_v^{\{u>\alpha h\}}\le{\widehat R}{}_h^{\{u>\alpha h\}}
\le\frac u\alpha\wedge h\le\frac u\alpha\wedge\frac h\alpha=\frac p\alpha.
$$
Because $v=\int K(.,Y)d\nu(Y)\in\cal H_i(U)$ and $p/\alpha\in{\cal P}(U)$
we find by Lemma
\ref{lemma7.2} (a) (applied to $h_1=v$, $h_2=0$) that $v=0$, that is,
 $$
v=\int K(.,Y)d\nu(Y)=\int_{A_\alpha}K(.,Y)d\mu_h(Y)=0,
 $$
and since $K(.,Y)>0$ it follows that $\mu_h(A_\alpha)=0$. By
varying $\alpha$ through a decreasing sequence tending to $0$ we
conclude from (4.2)
that indeed $\mu_h(\{Y:{\limsup}_{\cal F(Y)}\,u/h>0\})=0$.
\end{proof}

The rest of the proof of Theorem \ref{thm7.6} proceeds much as in the classical
case. For the convenience of the reader we bring  most of the details,
following in part \cite[Section 9.4]{AG}.

\begin{cor}\label{cor7.8} Let $u,h\in\cal H_i(U)$, where $h\ne0$ and
$\mu_u,\mu_h$ are mutually singular. Then $u\wedge h\in\cal P(U)$, and $u/h$
has minimal-fine limit $0$ $\mu_h$-a.e.\ on $\Delta_1(U)$.
\end{cor}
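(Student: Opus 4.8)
\textbf{Proof plan for Corollary \ref{cor7.8}.}
The plan is to deduce this from Proposition \ref{prop7.7} once we know that $u\wedge h$ is a fine potential. So the first and main task is to prove $u\wedge h\in\cal P(U)$ under the hypothesis that $\mu_u$ and $\mu_h$ are mutually singular. I would argue by contradiction: write the Riesz decomposition $u\wedge h=p+k$ with $p\in\cal P(U)$ and $k\in\cal H_i(U)$, and suppose $k\ne0$. Since $k\le u\wedge h\le u$ and $k\le h$, Lemma \ref{lemma7.2}(a) (with $p=0$) gives $k\preccurlyeq u$ and $k\preccurlyeq h$, hence by Remark \ref{remark2.18} (the specific order on $\cal H_i(U)$ is the band order) and the uniqueness of the Martin representation, the representing measure $\mu_k$ satisfies $\mu_k\le\mu_u$ and $\mu_k\le\mu_h$. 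As $\mu_u\perp\mu_h$, this forces $\mu_k=0$, so $k=0$, a contradiction. Therefore $u\wedge h=p\in\cal P(U)$.

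With $u\wedge h\in\cal P(U)$ established, Proposition \ref{prop7.7} applies directly with this $u$ and $h$: it yields that $u/h$ has minimal-fine limit $d\mu_u/d\mu_h$ at $\mu_h$-a.e.\ point of $\Delta_1(U)$, and $d\mu_u/d\mu_h=0$ because $\mu_u$ and $\mu_h$ are mutually singular (the absolutely continuous component of $\mu_u$ with respect to $\mu_h$ is the zero measure). This gives exactly the asserted minimal-fine limit $0$ $\mu_h$-a.e.\ on $\Delta_1(U)$.

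The step I expect to be the main obstacle is justifying $\mu_k\le\mu_u$ and $\mu_k\le\mu_h$ from the specific-order relations $k\preccurlyeq u$ and $k\preccurlyeq h$. One must check that passing from a specific minorant in $\cal H_i(U)$ to its Martin representing measure is order-preserving; this follows because if $u=k+k'$ with $k,k'\in\cal H_i(U)$ then by uniqueness in Theorem \ref{thm3.14} the representing measures add, $\mu_u=\mu_k+\mu_{k'}$, whence $\mu_k\le\mu_u$. (Here one also uses that $k\preccurlyeq u$ with $u\in\cal S(U)$ and $k$ invariant implies the complementary summand $u-k$ is again invariant, which is immediate from Lemma \ref{lemma7.2}(a) applied to $h_1=k$, $h_2=u$ in the decomposition $u=k+(u-k)$, or alternatively since $\cal H_i(U)$ is a band.) Once this monotonicity is in hand, everything else is a short formal deduction.
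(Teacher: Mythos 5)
Your proposal is correct and follows essentially the same route as the paper: decompose $u\wedge h=p+k$, use the specific-order comparison (the paper invokes Lemma \ref{lemma2.1}, you invoke Lemma \ref{lemma7.2}(a), which amounts to the same thing here) to get $\mu_k\le\mu_u$ and $\mu_k\le\mu_h$, conclude $k=0$ from mutual singularity, and then apply Proposition \ref{prop7.7}. Your extra remark justifying that specific minorization yields monotonicity of the representing measures is exactly the point the paper uses implicitly, so nothing is missing.
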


\begin{proof} There are Borel subsets $A_1,A_2$ of $U$ such that
$A_1\cup A_2=\Delta_1(U)$ and
$\mu_u(A_1)=\mu_h(A_2)=0$. Write $u\wedge h=p+k$ with $p\in\cal P(U)$,
$k\in{\cal H}{}_i(U)$. Then $k\le u$, hence $k\preccurlyeq u$ by Lemma
\ref{lemma2.1}, and so $\mu_k\le\mu_u$.
Similarly, $\mu_k\le\mu_h$. It follows that
$\mu_k(\Delta_1(U))\le\mu_u(A_1)+\mu_h(A_2)=0$ and hence $k=0$, and so
$u\wedge h=p\in\cal P(U)$. The remaining assertion now follows from
Proposition \ref{prop7.7}.
\end{proof}

\begin{cor}\label{cor7.10} Let $h\in\cal H_i(U)\setminus\{0\}$, let $A$ be a
Borel subset of \,$\Delta(U)$, and let
$h_A=K(1_A\mu_h)=\int_AK(.,Y)d\mu_h(Y)$. Then
$h_A/h$ has minimal-fine limit $1_A(Y)$ at $Y$ $\mu_h$-a.e.\ for
$Y\in\Delta_1(U)$.
\end{cor}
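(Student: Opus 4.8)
The plan is to decompose $h = h_A + h_{A^c}$ where $A^c = \Delta(U)\setminus A$, so that $h_{A^c} = K(1_{A^c}\mu_h)$, and to reduce the statement to Corollary \ref{cor7.8}. First I would observe that $\mu_{h_A} = 1_A\mu_h$ and $\mu_{h_{A^c}} = 1_{A^c}\mu_h$ by the uniqueness of the representing measure (Theorem \ref{thm3.21}), and that these two measures are mutually singular since they are carried by the disjoint Borel sets $A\cap\Delta_1(U)$ and $A^c\cap\Delta_1(U)$. Note also that $h_A, h_{A^c}\in\cal H_i(U)$ by Corollary \ref{cor3.20} (or Corollary \ref{cor3.18}), being invariant functions finite q.e.\ since they are majorized by $h\in\cal S(U)$.

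The key step is then to apply Corollary \ref{cor7.8} to the pair $(h_{A^c}, h)$: since $\mu_{h_{A^c}} = 1_{A^c}\mu_h$ and $\mu_h$ are \emph{not} mutually singular in general, I cannot use that corollary directly for $h_{A^c}/h$. Instead I would apply Corollary \ref{cor7.8} to the pair $(h_A, h_{A^c})$, whose representing measures \emph{are} mutually singular; this gives that $h_A/h_{A^c}$ has minimal-fine limit $0$ at $\mu_{h_{A^c}}$-a.e.\ point of $\Delta_1(U)$, and symmetrically $h_{A^c}/h_A$ has minimal-fine limit $0$ at $\mu_{h_A}$-a.e.\ point. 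Since $\mu_{h_A}$ is carried by $A\cap\Delta_1(U)$ and $\mu_{h_{A^c}}$ by $A^c\cap\Delta_1(U)$, at $\mu_h$-a.e.\ point $Y\in A\cap\Delta_1(U)$ we have $\lim_{\cal F(Y)} h_{A^c}/h_A = 0$, whence
$$
\lim_{\cal F(Y)}\frac{h_A}{h} = \lim_{\cal F(Y)}\frac{h_A}{h_A + h_{A^c}}
= \lim_{\cal F(Y)}\frac{1}{1 + h_{A^c}/h_A} = 1 = 1_A(Y),
$$
and symmetrically at $\mu_h$-a.e.\ point $Y\in A^c\cap\Delta_1(U)$ we get $\lim_{\cal F(Y)} h_A/h = 0 = 1_A(Y)$. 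Here one must be slightly careful: the arithmetic with the quotients is valid where $h_A$ and $h_{A^c}$ are both finite, which is q.e., and polar sets are minimal-thin at every point of $\Delta_1(U)$, so they do not affect minimal-fine limits; similarly one handles the convention about the value of a quotient where both functions are infinite.

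The main obstacle I anticipate is the bookkeeping around the case $h_A = 0$ (equivalently $\mu_h(A) = 0$), in which the claimed limit $1_A(Y)$ should be $0$ $\mu_h$-a.e.\ — but then $\mu_h$ is carried by $A^c$, so $\mu_h$-a.e.\ $Y$ lies in $A^c\cap\Delta_1(U)$ and indeed $1_A(Y)=0$ there, so the statement is vacuous in the sense that it concerns a $\mu_h$-null portion of $A$; the symmetric remark handles $h_{A^c}=0$. A cleaner route to sidestep this, which I would actually adopt, is to note first that on $A\cap\Delta_1(U)$ the measures $\mu_{h_A}=1_A\mu_h$ and $\mu_h$ agree, so applying Proposition \ref{prop7.7} is not needed and the whole content is genuinely just the mutual-singularity statement of Corollary \ref{cor7.8} applied once to $(h_A,h_{A^c})$; the elementary identity $h_A/h = 1/(1 + h_{A^c}/h_A)$ then does the rest on the set where $h_A>0$, and on $\{h_A=0\}\cap\Delta_1(U)$ (a $\mu_h$-null subset of $A$ together with all of $A^c$) the identity $1_A = 0 = h_A/h$ holds trivially $\mu_h$-a.e.
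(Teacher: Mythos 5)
Your proof is correct and essentially identical to the paper's: there one writes $u=h-h_A$ (your $h_{A^c}$), observes that $\mu_{h_A}=1_A\mu_h$ and $\mu_u=1_{\Delta_1(U)\setminus A}\mu_h$ are carried by the disjoint sets $A\cap\Delta_1(U)$ and $\Delta_1(U)\setminus A$ and hence are mutually singular, and then applies Corollary \ref{cor7.8} in both directions to get $h_A/h\to1$ $\mu_h$-a.e.\ on $A$ and $h_A/h\to0$ $\mu_h$-a.e.\ on $\Delta_1(U)\setminus A$. Your additional bookkeeping (the degenerate case $h_A=0$, the quotient arithmetic off a polar set) only makes explicit what the paper leaves implicit.
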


\begin{proof} Write $u=h-h_A\in\cal H_i(U)$, which is invariant because
$h_A\preccurlyeq h$. Since $h_A=K(1_A\mu_h)$ and because $1_A\mu_h$ is carried by
$\Delta_1(U)$ along with $\mu_h$, we have $\mu_{h_A}=1_A\mu_h$, which is
carried by $A\cap\Delta_1(U)$. Similarly,
$\mu_u=\mu_h-\mu_{h_A}=1_{U\setminus A}\mu_h$ is carried by
$\Delta_1(U)\setminus A$. In particular, $\mu$ and $\mu_h$ are mutually
singular. It follows by Corollary \ref{cor7.8} that $h_A/h$ and
$u/h=1-h_A/h$ have minimal fine limit $0$ $\mu_h$-a.e. on $A$ and on
$\Delta_1(U)\setminus A$, respectively, whence the assertion.
\end{proof}

\begin{definition}\label{def7.11} For any function
$h\in\cal H_i(U)\setminus\{0\}$
and any $\mu_h$-integrable function $f$ on $\Delta_1(U)$ we define
$$u_{f,h}(x)=\int K(x,Y)f(Y)d\mu_h(Y)\quad\text{for }x\in U.
$$
\end{definition}

\begin{prop}\label{prop7.12} Let $h\in\cal H_i(U)\setminus\{0\}$ and let $f$
be a $\mu_h$-integrable function on $\Delta(U)$. Then $u_{f,h}/h$ has
minimal-fine limit $f(Y)$ at $\mu_h$-a.e.\ point $Y$ of \,$\Delta_1(U)$.
\end{prop}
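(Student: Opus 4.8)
\textbf{Proof plan for Proposition \ref{prop7.12}.}
The plan is to reduce the general $\mu_h$-integrable $f$ to the case of indicator functions, where Corollary \ref{cor7.10} applies, and then pass to the limit by a standard approximation argument. First I would observe that the map $f\mapsto u_{f,h}$ is linear, and that if $0\le f\le g$ then $u_{f,h}\preccurlyeq u_{g,h}$ by Lemma \ref{lemma2.1} (both being invariant, since $f\mu_h$ and $g\mu_h$ are carried by $\Delta_1(U)$, so Corollary \ref{cor3.20} applies and $u_{f,h},u_{g,h}\in\cal H_i(U)$). Thus it suffices to treat $f\ge0$ (splitting $f=f^+-f^-$) and, by Corollary \ref{cor7.10} and linearity, the assertion holds whenever $f$ is a finite linear combination of indicators of Borel subsets of $\Delta(U)$, i.e.\ a nonnegative Borel simple function.

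Next I would handle the monotone passage to the limit. Let $f\ge0$ be $\mu_h$-integrable and choose an increasing sequence $(f_j)$ of nonnegative simple Borel functions with $f_j\uparrow f$ $\mu_h$-a.e.; put $g_j=f-f_j\downarrow 0$. By monotone convergence $u_{f_j,h}\uparrow u_{f,h}$ pointwise on $U$, and the functions $u_{g_j,h}=u_{f,h}-u_{f_j,h}$ form a decreasing sequence in $\cal H_i(U)$, specifically decreasing by Lemma \ref{lemma7.2}(a), with specific infimum the invariant function $w:=K((\inf_jg_j)\mu_h)$; but $\inf_jg_j=0$ $\mu_h$-a.e., so $w=0$. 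Now the key point is to control $\limsup_{\cal F(Y)}(u_{g_j,h}/h)$. For fixed $j$, since each $u_{f_i,h}/h$ with $i\ge j$ has minimal-fine limit $f_i(Y)$ and $u_{f_j,h}/h$ has minimal-fine limit $f_j(Y)$, one gets along $\cal F(Y)$, for $\mu_h$-a.e.\ $Y$,
$$\limsup_{\cal F(Y)}\frac{u_{f,h}}{h}\ge\liminf_{\cal F(Y)}\frac{u_{f,h}}{h}\ge\liminf_{\cal F(Y)}\frac{u_{f_j,h}}{h}=f_j(Y),$$
so letting $j\to\infty$ gives $\liminf_{\cal F(Y)}(u_{f,h}/h)\ge f(Y)$ $\mu_h$-a.e. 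For the reverse inequality I would bound $u_{g_j,h}/h$: since $u_{g_j,h}\le u_{f,h}$ and, more usefully, $u_{g_j,h}\wedge h\in\cal P(U)$ once $j$ is large enough on a suitable set — here I would instead apply Proposition \ref{prop7.7} to $u_{g_j,h}$ and $h$, using that $u_{g_j,h}$ is a decreasing limit of invariant functions with zero infimum, hence $u_{g_j,h}\wedge h$ becomes arbitrarily small specifically; more precisely $\widehat\inf_j(u_{g_j,h}\wedge h)=w\wedge h=0$, so for a set of $Y$ of full $\mu_h$-measure the minimal-fine $\limsup$ of $u_{g_j,h}/h$ tends to $0$ as $j\to\infty$. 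Combining, $\limsup_{\cal F(Y)}(u_{f,h}/h)\le\limsup_{\cal F(Y)}(u_{f_j,h}/h)+\limsup_{\cal F(Y)}(u_{g_j,h}/h)=f_j(Y)+o(1)\le f(Y)+o(1)$, giving $\limsup_{\cal F(Y)}(u_{f,h}/h)\le f(Y)$ $\mu_h$-a.e. Intersecting the relevant full-measure sets over $j$ yields the minimal-fine limit $f(Y)$ $\mu_h$-a.e.

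The main obstacle I anticipate is making the control of the error term $u_{g_j,h}/h$ along $\cal F(Y)$ rigorous and uniform in $Y$ off a $\mu_h$-null set: one needs that a decreasing sequence of invariant functions $v_j$ with $v_j\wedge h$ having zero specific infimum satisfies $\limsup_{\cal F(Y)}(v_j/h)\to 0$ for $\mu_h$-a.e.\ $Y$, which is essentially a monotone/countable-union version of Proposition \ref{prop7.7} — one applies that proposition to each $v_j$ (after replacing $v_j$ by $v_j\wedge$(something) to land in $\cal P(U)$, or directly if $v_j\wedge h\in\cal P(U)$) to get a null set $N_j$ off which $\limsup_{\cal F(Y)}(v_j\wedge h)/h=0$; but one must also bound $v_j/h$ by $(v_j\wedge h)/h$ plus a term that is small, using $v_j\le h$ for large $j$ fails in general, so instead I would work with $v_j\wedge h$ throughout and note $v_j/h\le 1$ is not available — the clean route is to replace the quotient estimate by the observation that $(u_{g_j,h}\wedge h)\in\cal P(U)$ because $\mu_{u_{g_j,h}}=g_j\mu_h$ and $\mu_h$ are not mutually singular in general, so one rather uses $u_{g_j,h}\le u_{g_1,h}$ and a direct Proposition \ref{prop7.7} argument with $\alpha$-level sets as in its proof, tracking that $\bigcup_\alpha A_\alpha^{(j)}$ shrinks. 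Handling this carefully, with the countable intersection of null sets over $j$ and over a rational sequence of $\alpha$'s, is the delicate bookkeeping; the potential-theoretic inputs (Lemmas \ref{lemma2.1}, \ref{lemma3.21a}, \ref{lemma7.2}, Proposition \ref{prop7.7}, Corollary \ref{cor7.10}) are all already in place.
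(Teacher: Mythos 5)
Your overall plan coincides with the paper's first step: the paper also reduces to $f\ge0$ and to (Borel) step functions via Corollary \ref{cor7.10}, and then simply refers to the proof of \cite[Theorem 9.4.5]{AG} for the passage to general $\mu_h$-integrable $f$. Your lower-bound half is sound: from $u_{f,h}\ge u_{f_j,h}$ and the simple-function case one gets $\liminf_{\cal F(Y)}u_{f,h}/h\ge f_j(Y)$ for $\mu_h$-a.e.\ $Y$, hence $\ge f(Y)$ a.e.\ after intersecting countably many full-measure sets.

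The genuine gap is the upper bound. You cannot invoke Proposition \ref{prop7.7} for $u_{g_j,h}$ and $h$, because its hypothesis $u_{g_j,h}\wedge h\in\cal P(U)$ fails in general: $\mu_{u_{g_j,h}}=g_j\mu_h$ is absolutely continuous with respect to $\mu_h$, and if, say, $g_j\ge c$ on a Borel set $A\subset\Delta_1(U)$ with $\mu_h(A)>0$ and $0<c<1$, then $u_{g_j,h}\wedge h\ge c\,K(1_A\mu_h)$, a nonzero invariant minorant, so $u_{g_j,h}\wedge h\notin\cal P(U)$. The fallback you then assert --- that $\widehat\inf_j u_{g_j,h}=0$ forces $\mflimsup_{\cal F(Y)}u_{g_j,h}/h\to0$ for $\mu_h$-a.e.\ $Y$ --- is precisely the statement requiring proof; pointwise (q.e.) decrease to $0$ on $U$ of invariant functions gives no control of boundary behaviour along the filters $\cal F(Y)$ without a quantitative ingredient. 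What is needed (and what the argument of \cite[Theorem 9.4.5]{AG} that the paper invokes supplies) is a weak-type estimate: for $v=K(g\mu_h)$ with $g\ge0$ $\mu_h$-integrable and $\lambda>0$, one has $\mu_h\bigl(\{Y\in\Delta_1(U):\mflimsup_{\cal F(Y)}v/h>\lambda\}\bigr)\le\lambda^{-1}\int g\,d\mu_h$. This is provable inside the paper's framework by rerunning the level-set argument of Proposition \ref{prop7.7}: with $A_\lambda=\{Y:\widehat R{}_{K(.,Y)}^{\{v>\lambda h\}}=K(.,Y)\}$ and $w=K(1_{A_\lambda}\mu_h)$, Lemma \ref{lemma3.21a} gives $w=\widehat R{}_w^{\{v>\lambda h\}}\le\widehat R{}_h^{\{v>\lambda h\}}\le v/\lambda$, and then Lemma \ref{lemma2.1} (or Lemma \ref{lemma7.2}(a)) together with uniqueness of the representing measure yields $1_{A_\lambda}\mu_h\le\lambda^{-1}g\,\mu_h$, hence the estimate; applying it to $g=g_j$ (where $\int g_j\,d\mu_h\to0$ by dominated convergence) and to a countable set of rational $\lambda$ closes your argument. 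As written, your proposal only gestures at ``a direct Proposition \ref{prop7.7} argument with $\alpha$-level sets'' and never states or proves this decisive inequality, so the key step of the proof is missing.
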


\begin{proof} We may assume that $f\ge0$. The case where $f$ is a (Borel)
step function follows easily from Corollary \ref{cor7.10}. For the general
case we refer to the proof of \cite[Theorem 9.4.5]{AG}, which carries over
entirely.
\end{proof}

We are now prepared to prove Theorem \ref{thm7.6}, the Fatou-Na{\"\i}m-Doob theorem in our setting.

\smallskip\noindent{\it{Proof of Theorem \ref{thm7.6}.}} Write $v=p+h$ with $p\in\cal P(U)$ and $h\in{\cal H}_i(U)$. By our definition of $\mu_v$ we then have $\mu_v=\mu_h$.
We may assume that $h\ne0$, for otherwise $\mu_v=0$ and the assertion becomes
trivial. Let $\nu$ be the singular component of $\mu_u$ with
respect to $\mu_v=\mu_h$. Write $u=q+k$ with $q\in\cal P(U)$ and
$k\in\cal H_i(U)$. Then $\mu_u=\mu_k=f\mu_h+\nu$, and hence in view of Definition \ref{def7.11}
$$u=q+u_{f,h}+\int K(.,Y)d\nu(Y).$$
By applying Proposition \ref{prop7.7} with $u$ replaced by $q$
(hence $\mu_u$ by $\mu_q=0$), next
Corollary \ref{cor7.8} with $u$ replaced by $K\nu$ (hence $\mu_h$ replaced by $\nu$ and $d\mu_u/d\mu_h$ by $d\nu/d\mu_h=0$), and finally by applying Proposition \ref{prop7.12} to the present $u_{f,h}$, we see that $u/h$ has minimal-fine limit
$f(Y)$ at $\mu_v$-a.e.\ point $Y$ of $\Delta_1(U)$. Since $u/v$ is defined
quasi-everywhere in $U$ and
$$\frac uv=\frac{u/h}{1+p/h},$$
the theorem now follows by applying Proposition \ref{prop7.7} with $u$ there replaced by $p$ (and hence $\mu_u$ by the present $\mu_p=0$). \hfill$\square$

\thebibliography{99}

\bibitem{Ai} Aikawa, H.: \textit{Potential Analysis on non-smooth domains --
Martin boundary and boundary Harnack principle}, Complex Analysis and
Potential Theory, 235--253, CRM Proc. Lecture Notes 55, Amer. Math. Soc.,
Providence, RI, 2012.

\bibitem{Al} Alfsen, E.M.: \textit{Compact Convex Sets and Boundary
Integrals}, Ergebnisse der Math., Vol. 57, Springer, Berlin, 2001.

\bibitem{An} Ancona, A.: \textit{Sur une conjecture concernant la capacit\'e
et l'effilement}, Theorie du Potentiel (Orsay, 1983), Lecture Notes in Math.
1096, Springer, Berlin, 1984, 34--68.

\bibitem{AG} Armitage, D.H., Gardiner, S.J.: \textit{Classical
Potential Theory}, Springer, London, 2001.

\bibitem{BeB}  Beznea, L, Boboc, N.: \textit{On the tightness of capacities
associated with sub-Markovian resolvents}, Bull. London Math. Soc. \textbf{37}
(2005), 899--907.

\bibitem{BBC} Boboc, N., Bucur, Gh., Cornea, A.: \textit{Order and
Convexity in Potential Theory: H-Cones}, Lecture Notes in Math. 853,
Springer, Berlin, 1981.

\bibitem{BB} Boboc, N.,  Bucur Gh.: \textit{Natural localization and natural sheaf property in standard
H-cones of functions}, I, Rev. Roumaine Math. Pures Appl. \textbf{30} (1985),
1--21.

\bibitem{CC}  Constantinescu C. A. Cornea A.:\textit{Potential Theory on
Harmonic spaces}, Springer, Heidelberg, 1972.

\bibitem{DM} Dellacherie, C., Meyer, P.A.: \textit{Probabilit\'es et
Potentiel}, Hermann, Paris 1987, Chap. XII-XVI.

\bibitem{Do2} Doob, J.L.: \textit{Applications to analysis of a topological
definition of smallness of a set}, Bull. Amer. Math. Soc. \textbf{72} (1966),
579--600.

\bibitem{Do} Doob, J.L.: \textit{Classical Potential Theory and Its
Probabilistic Counterpart}, Grundlehren Vol. 262, Springer, New York, 1984.

\bibitem{El1} El Kadiri, M.: {Sur la d\'ecomposition de Riesz et la
repr\'esentation
int\'egrale des fonctions finement surharmoniques}, Positivity \textbf{4}
(2000), no. 2, 105--114.

\bibitem{El2} El Kadiri, M.: \textit{Sur les suites de fonctions finement harmoniques}, Rivista Univ. Parma. \textbf{72} (2003), 225--251.

\bibitem{EF2} El Kadiri, M., Fuglede, B.: \textit{Sweeping at the Martin
boundary of a finely open set}, Manuscript (2014).

\bibitem{EF3} El Kadiri, M., Fuglede, B.: \textit{The Dirichlet problem at
the Martin boundary of a finely open set}, Manuscript (2014).

\bibitem{F1} Fuglede, B.: \textit{Finely Harmonic Functions}, Lecture Notes
in Math. 289, Springer, Berlin, 1972.

\bibitem{F1a} Fuglede, B.: \textit{Remarks on fine continuity and the base
operation in potential theory}, Math.\ Ann.\ \textbf{210} (1974), 207--212.

\bibitem{F2} Fuglede, B.: \textit{Sur la fonction de Green pour un
domaine fin}, Ann. Inst. Fourier \textbf{25}, 3--4 (1975), 201--206.

\bibitem{F3} Fuglede, B.: \textit{Finely harmonic mappings and finely
holomorphic functions}, Ann. Acad. Sci. Fennicae, Ser. A.I. \textbf{10} (1976), 113-127.

\bibitem{F3a} Fuglede, B.: \textit{Localization in fine potential theory and uniform approximation by subharmonic functions}, J. Funct. Anal. \textbf{49} (1982) 52-72.

\bibitem{F4} Fuglede, B.: \textit{Integral representation of fine
 potentials}, Math. Ann. \textbf{262} (1983), 191--214.

\bibitem{F5} Fuglede, B.: \textit{Repr\'esentation int\'egrale des potentiels fins}, C.R. Acad. Sc. Paris \textbf{300}, Ser. I, 5 (1985), 129--132.

\bibitem{GH} Gardiner, S.J., Hansen, W.: \textit{The Riesz decomposition of finely superharmonic functions}, Adv. Math. \textbf{214}, 1 (2007), 417-436.

\bibitem{Go} Gowrisankaran, K.: \textit{Extreme harmonic functions and
boundary value problems}, Ann. Inst. Fourier \textbf{13}, 2 (1963), 307--356.

\bibitem{He} Herv\'e, R.-M.: \textit{Recherches axiomatiques sur la
th\'eorie des fonctions surharmoniques et du potentiel}, Ann. Inst. Fourier
\textbf{12} (1962), 415--571.

\bibitem{LeJ1}  Le Jan, Y.: \textit{Quasi-continuous functions associated with
Hunt processes}, Proc. Amer. Math. Soc. \textbf{86} (1982), 133--138.

\bibitem{LeJ2}  Le Jan, Y.: \textit{Quasi-continuous functions and Hunt processes}, J. Math. Soc. Japan \textbf{35} (1983), 37--42.

\bibitem{LeJ3} Le Jan, Y.: \textit{Fonctions cad-lag sur les trajectoires d'un processus de Ray}, Theorie du
Potentiel (Orsay, 1983), Lecture Notes in Math. 1096 (Springer, 1984), 412--418.

\bibitem{L} Lyons, T.: \textit{Cones of lower semicontinuous functions and a
characterisation of finely hyperharmonic functions}, Math. Ann. \textbf{261}
(1982), 293--297.

\bibitem{Mo} Mokobodzki, G.: \textit{Repr\'esentation int\'egrale des
fonctions surharmoniques au moyen des r\'eduites}, Ann. Inst. Fourier \textbf{15}, 1 (1965), 103--112.

\bibitem{T} Taylor, J.C.: \textit {An elementary proof of the theorem of
Fatou-Na{\"\i}m-Doob}, Canadian Mathematical Society
Conference Proceedings, Vol. 1, 1981.

\end{document}